\documentclass[a4paper,oneside,11pt]{article}
\usepackage{amsmath}
\usepackage{amssymb}
\usepackage{titlesec}
\usepackage{graphicx}
\usepackage{fancyhdr}
\usepackage{latexsym}
\usepackage{mathrsfs}
\usepackage{booktabs}
\usepackage{mathrsfs}
\usepackage{mathtools}
\usepackage{amsthm}
\usepackage{wasysym}
\usepackage{cite}
\usepackage{color}
\usepackage{amsfonts}
\usepackage{pgf,pgfarrows,pgfnodes,pgfautomata,pgfheaps}
\usepackage{amsmath,amssymb}
\usepackage{graphics}
\usepackage{multimedia}
\usepackage{graphics}
\usepackage{cite}
\usepackage{latexsym}
\usepackage{amsmath}
\usepackage{amssymb}
\usepackage[dvips]{epsfig}
\usepackage{amscd}
\numberwithin{equation}{section}
\newtheorem{definition}{Definition}[section]
\newtheorem{theorem}{Theorem}[section]
\newtheorem{lemma}{Lemma}[section]
\newtheorem{remark}{Remark}[section]

\newcommand{\thatsall}{\hfill$\Box$}
\newcommand{\R}{{\mathbb{R}^2}}

\date{}
\title{Hardy Space and Exponential Decay of Inhomogeneous Navier-Stokes System in $\mathbb{R}^2$ }
\author{Xinyu F{\small an}$^{a} $, Boqiang L{\small \"U}$^{b} $, Xue W{\small ANG}$^{c}$  \thanks{Email: fanxinyu17@mails.ucas.ac.cn(X. Fan), lvbq86@ncu.edu.cn \& lvbq86@163.com (B.   L\"u), xuewa@amss.ac.cn (X.  Wang) } \\
{\normalsize a. School of Mathematical  Sciences, } \\
{\normalsize  Nankai University, Tianjin 300110, P. R. China;}
 \\ {\normalsize b. School of Mathematics and Computer Sciences, }\\ {\normalsize \& Institute of Mathematics and Interdisciplinary Sciences,}\\ {\normalsize  Nanchang University, Nanchang 330031, P. R. China;}\\
{\normalsize c. School of Mathematical Sciences,} \\ {\normalsize    Tiangong University, Tianjin 300380, P. R. China.}}

\begin{document}
\maketitle
\begin{abstract}
We investigate the inhomogeneous incompressible Navier-Stokes system in $\R$ with 
zero initial total momentum, 
allowing vacuum and densities with finite moments but not necessarily compact support. Exploiting the cancellation structure inherent in the 2D equations along with spatial moment bounds on the density, 
we derive that $\rho\dot{u}$ belongs to the Hardy space $\mathcal{H}^1(\R)$ and establish weighted Stokes estimates through endpoint Hardy-Littlewood-Sobolev inequalities.
These estimates identify the far-field velocity $u_\infty(t)$ and control the remainder $u-u_\infty$ in \(L^p(\mathbb R^2)\), leading to uniform exponential decay of $u$ throughout $\R$. Combined with higher-order regularity bounds,
we establish  the global well-posedness of strong solutions for the density-dependent viscosity case, and prove the  
uniform regularity of flow maps for measurable density patches in the constant viscosity model.

\textbf{Keywords:} Incompressible Navier-Stokes equations; Cauchy problem; Hardy space; Exponential decay; Density patch  
\end{abstract}

\section{Introduction}
\quad The evolution of an inhomogeneous incompressible viscous flow in $\mathbb{R}^2$ is described by the following Navier-Stokes equations: 
\begin{equation}\label{C101}
\begin{cases}
\rho_t+\mathrm{div}(\rho u)=0,\\
(\rho u)_t+\mathrm{div}(\rho u \otimes u)-\mathrm{div}(2\mu(\rho)Du)+\nabla P=0,\\
\mathrm{div}u=0.
\end{cases}
\end{equation}
Here, $t\ge 0$  stands for the time variable and  $x=(x^1,x^2)\in\mathbb{R}^2$ for the spatial coordinate. The unknowns  $\rho$, $u=(u^1,u^2)$, and $P$ represent the fluid density, velocity and pressure respectively.  $Du=\frac12\big( \nabla u+(\nabla u)^T\big)$ is the deformation tensor. The viscosity  $\mu =\mu(\rho)$ depends on $\rho$ and satisfies that 
\begin{equation} \label{mu}
\mu(s) \in C^1[0, \infty), ~~   ~~  \mu(s)>0 ~ \text{for  any }\,s\in[0,\infty).
\end{equation}

We look for the solution $(\rho,u)$ to the Cauchy problem for \eqref{C101} with the density vanishing at infinity and  supplemented with the  initial conditions:
\begin{equation}\label{1.2}
\rho(x,0)=\rho_0(x),\quad \rho u(x,0)=\rho_0 u_0(x),\quad x\in\R.
\end{equation} 

The classical existence theory of strong and weak solutions to inhomogeneous incompressible equations was developed by Ladyzhenskaya-Solonnikov \cite{LS} and Kazhikhov \cite{K} respectively, for the initial density $\rho_0$ is strictly positive. 
Then Simon \cite{Sm} removed the restriction on the lower bound of $\rho_0$ and obtained global weak solutions with a  finite energy, while Lions \cite{lions} further investigated the system with the viscosity depending on the density. 
Later, for densities close to a positive constant, Danchin \cite{Dan} obtained the well-posedness of \eqref{C101} in the critical functional framework, followed by further developments in Besov spaces by Abidi-Paicu \cite{AP} and Paicu-Zhang \cite{PZ}, see also \cite{AGZ,HPZ} and references therein. For flows allowing vacuum, Craig-Huang-Wang \cite{CHZ} generalized the result of Fujita-Kato \cite{FK} to \eqref{C101} in $\mathbb{R}^3$. 
Huang-Wang \cite{hw1} and L\"{u}-Shi-Zhong \cite{lsz} also constructed global strong solutions to \eqref{C101} in 2D bounded domains and $\R$ respectively. 

Beyond  existence theory, a further central challenge lies in establishing  sufficient velocity regularity under weak assumptions on the density, especially when discontinuities or vacuum are present. 
This issue is particularly illustrated 
in Lions' density patch problem, for which the initial density is given by a characteristic function of some domain $D$, say $\rho_0=\mathbf{1}_D$. The regularity of velocity plays a crucial role in the propagation of interface regularity (see \cite{C,BP}).
Liao-Zhang \cite{LZ} first obtained the global regularity of 2D density patches by assuming $\rho_0=\alpha{\bf 1}_D+\beta{\bf 1}_{D^c}$, where  $\alpha,\beta>0$ and $D$ is a simply connected $W^{3,p}$ domain. The breakthrough due to Danchin-Mucha \cite{DM,DM2} provided a class of strong solutions for density patches containing vacuum in the torus and a connected bounded domain. Then Prange-Tan \cite{PT} and Hao-Shao-Wei-Zhang \cite{HSWZ, HSWZ2} extended the related results to the 2D Cauchy problem with constant viscosities, by establishing proper interpolation and weighted estimates. 
If viscosities depend on density, it's more difficult to derive 
the estimation of  $\|\nabla u\|_{L^\infty}$
due to the low regularity of density. By imposing the striated regularity on $\mu(\rho_0)$, Paicu-Zhang \cite{PZ2} obtained the  global well-posedness of \eqref{C101} for non-vacuum initial data. Gancedo, Garc\'{i}a-Ju\'{a}rez, and Luna-Velasco \cite{GGL} 
established global $C^{1,\gamma}$ interface regularity with small jumps, including nonnegative densities in the torus. See also \cite{CSV,Dan2,DW} for advances in this direction.

The well-posedness theory \cite{LZ,HSWZ,DM2,PT} provides propagation of regularity on every finite time interval.
However, the uniform-in-time control and  limiting dynamics require further detailed estimates, particularly on the velocity decay. In the bounded domains and torus, Danchin-Mucha-Piasecki \cite{DMP} and Gancedo {\it et al.} \cite{GGL} obtained the exponential stability of the system \eqref{C101}. For 3D Cauchy problems of \eqref{C101}, 
Abidi-Gui-Zhang \cite{AGZ2} derived the algebraic decay speed of solutions when $\rho$ is away from vacuum, and He-Li-L\"{u} \cite{hll} showed that  the solutions  decay at exponential rates if  $\rho$ tends to $0$ at the far field. 
Recently, \v{S}kondri\'{c}-Violini \cite{SV} established the global well-posedness and exponential decay for bounded Lipschitz density patches surrounded by vacuum, with initial velocities in the critical space $\dot B^0_{2,1}(\mathbb R^2)$. More related issues can be found in \cite{PZ2,GGL,lx,H} and references therein.

These developments motivate a quantitative description of the velocity throughout the whole space and its long-time evolution in \eqref{C101}.
One of the major difficulties in 2D Cauchy problems is caused by the vacuum and endpoint Sobolev inequalities. In fact, the basic energy estimates only provide that 
$$\sqrt{\rho}u,\,\nabla u\in L^2(\R).$$
In the presence of vacuum, the critical Sobolev embedding only provides $u\in BMO$, which does not ensure the global $L^p$ integrability of $u$, even after subtracting a constant.
However, we show that the cancellation structure inherent in the 2D equations (see also \cite{SV}), 
along with the spatial moment bound of $\rho$, allows us to decompose 
\begin{equation*} 
\begin{split}
u=u_\infty(t)+v,~~v\in L^p(\R), 
\end{split}
\end{equation*}
with $u_\infty(t)$ is the far-field constant state of $u$.
This decomposition is achieved with the help of Hardy space $\mathcal{H}^1(\R)$,  the natural tool to capture cancellation structures and deal with endpoint estimates. 
Combined with energy and higher-order estimates, these velocity bounds yield exponential decay of solutions and the following results: 
\begin{itemize}

\item Global well-posedness of strong solutions allowing vacuum for the model with density-dependent viscosity.

\item Uniform-in-time regularity and exponential convergence of flow maps for bounded density with finite moments in the constant-viscosity case, which includes measurable density patches.
\end{itemize}

Let us introduce the notation in this paper.
For the integer $k\geq 0$ and $1 \leq p \le \infty$, the  standard Lebesgue spaces, H\"{o}lder spaces, and Sobolev spaces are denoted by:
\begin{equation*}\begin{cases}
L^p\triangleq L^p(\mathbb{R}^2),\quad C^\alpha\triangleq C^\alpha(\R),\quad  W^{k, p}\triangleq W^{k, p}(\mathbb{R}^2),\quad H^k\triangleq W^{k, 2},\\
D^{k,p}\triangleq D^{k,p}(\mathbb{R}^2)=\{v\in L^1_{\mathrm{loc}}(\mathbb{R}^2)\,\big|\,\nabla^k v\in L^p\},\quad D^k\triangleq D^{k,2},\\
C_{0,0}^\infty=\big\{w\in C_0^\infty(\mathbb{R}^2)\,\big|\,\mathrm{div}w=0\big\},\quad D^1_{0,0}= \mathrm{the~closure~of}~{C_{0,0}^\infty}~\mathrm{in}~D^1.
\end{cases}
\end{equation*}
The material derivative of a function $f$ refers to $\dot{f}\triangleq\partial_tf+u\cdot\nabla f.$ 

Throughout the paper, weak solutions satisfy the equations in the sense of distributions, with the initial conditions understood in the corresponding trace sense. Strong solutions have sufficient regularity for the equations to hold almost everywhere, while the regularity classes are specified below.
The local existence of strong solutions to \eqref{C101}--\eqref{1.2} is standard in view of \cite{CHZ,lsz}, 
and we directly state the result in  following Lemma \ref{L41} and omit the proof.
\begin{lemma}\label{L41}
For given constants $\bar\rho>0$ and $q>2$, suppose that the initial values $(\rho_0, u_0)$ satisfy
\begin{equation}\label{1.3}
\begin{split}
0\leq \rho_0\leq &\rho,\quad \rho_0\,(1+|x|^2) \in L^1,\quad \rho_0\in H^1\cap W^{1, q},\quad \sqrt{\rho_0}u_0\in L^2,\quad u_0\in D^1_{0,0}.
\end{split}
\end{equation}
Then there exists a time $T>0$ determined by $\bar\rho$, $\mu$, $q$, and the initial data, such that we can construct a unique strong solution $(\rho,u,P)$ to the system \eqref{C101}--\eqref{1.2} on $\R\times(0,T]$. 
Moreover, the following regularity assertions hold for every $0<\tau<T$: 
\begin{equation}\begin{cases}\label{1.7} 
\rho\in  C([0,T]; H^1\cap W^{1, q}),\quad
\rho\,(1+|x|^2)\in L^\infty(0,T;L^1),\\
\sqrt\rho u,\nabla u\in L^\infty(0,T;L^2),\\
\sqrt\rho u_t\in L^2(0,T;L^2)\cap L^\infty(\tau,T;L^2),\quad \nabla u_t\in L^2(\tau,T;L^2),\\
\nabla u,P\in L^2(0,T;H^1)\cap L^\infty(\tau,T;H^1)\cap L^2(\tau,T;W^{1,q}).
\end{cases}\end{equation}
\end{lemma}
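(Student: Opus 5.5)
We would prove Lemma \ref{L41} by the standard route of \cite{CHZ,lsz}: regularize the data, solve a non-degenerate problem, derive a priori bounds independent of the regularization, and pass to the limit. Concretely, we replace $\rho_0$ by $\rho_0^\delta=\rho_0+\delta$ on large balls $B_R$ (or on $\R$ with $\rho_0^\delta\ge\delta$), and take $u_0^\delta$ a smooth divergence-free approximation of $u_0$ in $D^1_{0,0}$. For positive density the classical theory (Ladyzhenskaya--Solonnikov \cite{LS}) gives a smooth local solution. The task is then to obtain bounds on a time $T$ depending only on $\bar\rho,\mu,q$ and the norms in \eqref{1.3}, uniformly in $\delta,R$.

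The a priori estimates proceed in the following order.
\begin{enumerate}
\item Transport gives $0\le\rho\le\bar\rho$, and we have the basic energy identity for $\sqrt\rho u$ and $\nabla u$.
\item Testing the momentum equation with $u_t$ (or $\dot u$) gives a bound on $\|\nabla u\|_{L^\infty_tL^2}+\|\sqrt\rho\dot u\|_{L^2_tL^2}$. The pressure and the Stokes system with variable viscosity $-\mathrm{div}(2\mu(\rho)Du)+\nabla P=-\rho\dot u$ are controlled by $\|\nabla^2u\|_{L^2}+\|\nabla P\|_{L^2}\le C(1+\|\nabla\mu(\rho)\|_{L^q})^{k}(\|\rho\dot u\|_{L^2}+\|\nabla u\|_{L^2})$. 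This follows from the $W^{1,q}$ regularity of $\rho$ and interpolation.
\item Applying $\partial_t+u\cdot\nabla$ to the momentum equation and testing with $\dot u$ gives $t$-weighted bounds on $\sqrt\rho\dot u$ in $L^\infty L^2$ and on $\nabla\dot u$ in $L^2L^2$. These yield the $L^\infty(\tau,T;H^1)$ and $L^2(\tau,T;W^{1,q})$ regularity of $\nabla u$ and $P$.
\item Differentiating the transport equation gives $\frac{d}{dt}\|\nabla\rho\|_{L^2\cap L^q}\le C\|\nabla u\|_{L^\infty}\|\nabla\rho\|_{L^2\cap L^q}$. Here $\int_0^T\|\nabla u\|_{L^\infty}dt$ is bounded by a Gagliardo--Nirenberg estimate using $\nabla u\in L^2W^{1,q}$.
\item Multiplying the mass equation by $|x|^2$ gives $\frac{d}{dt}\int\rho|x|^2\le 2\big(\int\rho|x|^2\big)^{1/2}\|\sqrt\rho u\|_{L^2}$, which produces the moment bound.
\end{enumerate}
These inequalities close by a Gronwall/continuity argument on a short interval.

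The main obstacle is the 2D whole-space setting with vacuum. There $u$ itself is not controlled in any $L^p$ by $\nabla u\in L^2$ and $\sqrt\rho u\in L^2$, yet convective terms such as $\rho u\cdot\nabla u$ must be estimated. Following \cite{lsz}, we would use a weighted inequality of the type $\|\rho^{\eta}v\|_{L^r}\le C(\|\sqrt\rho v\|_{L^2}+\|\nabla v\|_{L^2})$ for the weight $\bar x=(e+|x|^2)^{1/2}\log^{2}(e+|x|^2)$. This inequality is valid once $\int\rho\ge c>0$ on a fixed ball, and the moment bound $\rho(1+|x|^2)\in L^1$ keeps mass from escaping on a short time. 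With it we obtain $\|\bar x^{-\eta}v\|_{L^r}$ control, and the convective terms are absorbed using the weighted bounds on $\rho$. This explains the moment hypothesis in \eqref{1.3}.

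Passing to the limit uses the uniform bounds together with Aubin--Lions compactness, and the weighted estimate handles local compactness of $u$. Continuity $\rho\in C([0,T];H^1\cap W^{1,q})$ follows from the transport equation by the DiPerna--Lions renormalization. Uniqueness is proved by a weighted $L^2$ stability estimate for the difference of two solutions, in Lagrangian or $\dot H^{-1}$-type form for the density, again relying on the moment bound to control $u$ at infinity.
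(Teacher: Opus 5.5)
Your proposal takes essentially the same route as the paper, which does not prove Lemma~\ref{L41} but declares it standard in view of \cite{CHZ,lsz} and omits the proof. Your outline (regularization, energy and $t$-weighted $\dot u$ estimates with variable-viscosity Stokes bounds, the weighted inequality to handle vacuum in $\R$, the transport estimate for $\nabla\rho$ closed via $\int_0^T\|\nabla u\|_{L^\infty}dt$, compactness) is a sketch of exactly that standard argument; the one step to treat with care is uniqueness, since an Eulerian $L^2$-type estimate for $\rho_1-\rho_2$ strong enough for the viscous difference term $(\mu(\rho_1)-\mu(\rho_2))Du_2$ must absorb $(u_1-u_2)\cdot\nabla\rho_2$ with $u_1-u_2$ controlled only in $\bar x^{-\eta}$-weighted norms, which typically needs weighted bounds on $\nabla\rho_2$ at infinity that \eqref{1.3} does not give (the moment bound on $\rho_0$ does not control $\nabla\rho_0$ at infinity), so the Lagrangian variant you mention, where both solutions share $\rho_0$ and $\mu(\rho_0)$, is the more promising choice, a subtlety the paper's citation also leaves implicit.
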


Throughout the rest of the paper, we assume that the initial total momentum is zero for simplicity, 
\begin{equation}\label{zero}
\int_\R \rho_0 u_0(x)\,dx=0,
\end{equation}
while the general case can be handled by a Galilean transformation.
Now we give the crucial technical theorem describing the far field state $u_\infty$ of the velocity  $u$ and the integrable remainder $v=u-u_\infty$.
\begin{theorem}\label{T3}
Let $(\rho,u,P)$ be the strong solution provided by Lemma \ref{L41}. 
Then for any $\beta\in(0,1)$, there is a constant $\varepsilon$ depending on $\beta$,
such that if
\begin{equation}\label{CC90}
  \|\mu(\rho)-1\|_{L^\infty}\le \varepsilon,
\end{equation}
then the following conclusions hold.

1. There exists a unique far field velocity $u_\infty(t)$ such that
$$u(x,t)=u_\infty(t)+v(x,t),~~v\in L^{4/\beta}(\R).$$
Moreover, $u(x,t)\rightarrow u_\infty(t)$ as $|x|\rightarrow\infty$.

2. For $m=\int_\R\rho\,dx$, the far field velocity $u_\infty(t)$ is given by
\begin{equation}\label{CC99}
u_\infty(t)=\frac{1}{m}\int_\R\nabla\Delta^{-1}\rho\cdot\nabla u\,dx.
\end{equation}
This integral converges absolutely, and there is a positive constant $C_0$ such that
\begin{equation}\label{CC100}
|u_\infty(t)|\leq C_0\,\|\nabla u\cdot(1+|x|)^{\beta/2}\|_{L^2}.
\end{equation}

3. For the remainder $u-u_\infty(t)$, there is a positive constant $C_1$ such that 
\begin{equation}\label{CC101}
\|u-u_\infty(t)\|_{L^{4/\beta}}\leq C_1\|\nabla u\cdot|x|^{\beta/2}\|_{L^2}.
\end{equation}
Set $M(t)=\int_\R\rho|x|^2\,dx$. Then there is a constant $C_2$ such that
\begin{equation}\label{CC102}
\|\nabla u\cdot|x|^{\beta/2}\|_{L^2}\leq
 C_2 \big(1+M(t)^{\beta/2}\big)\,\|\sqrt{\rho}\dot u\|_{L^2}.
\end{equation}

The constants $C_0$, $C_1$, and $C_2$ depend only on $m$, $\bar\rho$, $\mu$, and $\beta$, but not on the terminal time $T$.
\end{theorem}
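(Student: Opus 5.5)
The plan is to rewrite the momentum equation as a perturbed Stokes system and exploit its divergence structure. Since $\mathrm{div}\,u=0$, we have
\[
-\Delta u+\nabla P=-\rho\dot u+\mathrm{div}\big(2(\mu(\rho)-1)Du\big).
\]
The key observation is that $\rho\dot u$ has zero mean: $\int\rho\dot u\,dx=\frac{d}{dt}\int\rho u\,dx=0$ by momentum conservation and \eqref{zero}. Moreover $\rho\dot u$ carries a spatial moment, since by Cauchy--Schwarz
\[
\int|\rho\dot u|(1+|x|)^{\beta}dx\le \|\sqrt\rho\dot u\|_{L^2}\Big(\int\rho(1+|x|)^{2\beta}\Big)^{1/2}\le C(1+M^{\beta/2})\|\sqrt\rho\dot u\|_{L^2}.
\]
A mean-zero function with this weighted $L^1$ moment and local $L^2$-type control belongs to $\mathcal H^1$. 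I would verify this through the atomic decomposition on dyadic annuli, or via the maximal function, using $\int|f|\log(2+|x|)<\infty$ together with local $L^{p}$ control ($\rho\dot u\in L^2$ since $\rho\le\bar\rho$). One needs to be careful here: $\mathcal H^1$ membership alone does not give the weight. I would therefore aim at the weighted endpoint Stokes estimate in the form
\[
\||x|^{\beta/2}\nabla u\|_{L^2}\lesssim \|(1+|x|)^{\beta/2}\rho\dot u\|_{L^{1}\cap\ldots}+\varepsilon\||x|^{\beta/2}\nabla u\|_{L^2},
\]
where $\nabla u=\nabla(-\Delta)^{-1}\mathbb P(\rho\dot u)+$ Calder\'on--Zygmund terms.

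The weighted estimate \eqref{CC102} goes as follows. For the second term, $\mathbb P\,\mathrm{div}\,\nabla\Delta^{-1}$ is a Calder\'on--Zygmund operator and $|x|^{\beta}$ is an $A_2$ weight in $\R$ for $\beta\in(0,1)$ ($|x|^{a}\in A_2$ iff $-2<a<2$). This gives a bound $C_\beta\varepsilon\||x|^{\beta/2}\nabla u\|_{L^2}$, absorbed by taking $\varepsilon$ small depending on $\beta$. For the first term, the kernel of $\nabla(-\Delta)^{-1}\mathbb P$ is homogeneous of degree $-1$, so we need a Stein--Weiss (weighted HLS) bound $L^2(|x|^\beta)\leftarrow$ data. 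Here I split $\rho\dot u=f$ into a near-field piece, handled by the standard weighted HLS $\||x|^{\beta/2}I_1 f\|_{L^2}\lesssim\||x|^{\beta/2}f\|_{L^{1}}$-type endpoint, which fails. So I would instead use the mean-zero cancellation: write $K(x-y)-K(x)$ and use $|K(x-y)-K(x)|\lesssim|y|^{\beta}|x|^{-1-\beta}$ for $|x|>2|y|$. This gives $|x|^{\beta/2}|x|^{-1-\beta}\in L^2(|x|>R)$, which is the endpoint HLS with the Hardy cancellation, plus the local part controlled by $\|f\|_{L^2}$ via $L^2\to$ weighted estimates with $f\in L^{4/3}$-type bounds from $\rho\le\bar\rho$. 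Collecting gives \eqref{CC102}. I expect this step, the weighted endpoint bound for $\rho\dot u$, to be the main obstacle.

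For parts 1 and 3, once $w=|x|^{\beta/2}\nabla u\in L^2$, I would use a weighted Sobolev (Caffarelli--Kohn--Nirenberg) inequality in $\R$: for $g$ with $|x|^{\beta/2}\nabla g\in L^2$ there is a unique constant $c$ with $\|g-c\|_{L^{4/\beta}}\le C\||x|^{\beta/2}\nabla g\|_{L^2}$. Indeed, $|\nabla g|\le |x|^{-\beta/2}w$, and $I_1$ maps weighted $L^2$ into $L^{4/\beta}$ by Stein--Weiss, with exponents $\frac{\beta}{4}=\frac12-\frac12+\frac{\beta}{4}$ matching. Define $u_\infty$ as this constant; uniqueness holds because constants are not in $L^{4/\beta}$. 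Pointwise convergence $u\to u_\infty$ follows since $\nabla u\in L^2\cap L^q$ for a.e. $t$ (so $v$ is uniformly H\"older) and $v\in L^{4/\beta}$, which forces decay.

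For part 2, I test against $\rho$: $\int\rho v\,dx=\int\rho u-mu_\infty=-mu_\infty$ by zero momentum. Then I would write
\[
\int\rho v=\int\Delta\Delta^{-1}\rho\,v=-\int\nabla\Delta^{-1}\rho\cdot\nabla v,
\]
with $\nabla v=\nabla u$, which gives \eqref{CC99} up to the sign convention of $\Delta^{-1}$. The integration by parts is justified because $|\nabla\Delta^{-1}\rho|\lesssim(1+|x|)^{-1}$: since $\rho\in L^1\cap L^\infty$ we get $\nabla\Delta^{-1}\rho\in L^\infty$ and $\sim m x/(2\pi|x|^2)$ at infinity. Absolute convergence and \eqref{CC100} then follow from Cauchy--Schwarz, since $(1+|x|)^{-1-\beta/2}\in L^2(\R)$.
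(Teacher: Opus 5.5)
Your overall architecture matches the paper's: the Stokes rewrite, $\int\rho\dot u\,dx=0$, $A_2$-absorption of the $(\mu(\rho)-1)Du$ term, Caffarelli--Kohn--Nirenberg/Stein--Weiss for $v$, and zero momentum plus integration by parts for $u_\infty$. However, the step that carries the theorem is not proved. That step is the weighted endpoint bound for $\nabla\Delta^{-1}\mathbb P(\rho\dot u)$ behind \eqref{CC102}.

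Write $Tf(x)=\int (K(x-y)-K(x))f(y)\,dy$ with $f=\rho\dot u$. Your cancellation estimate covers only $|y|<|x|/2$, and your ``local part'' covers only bounded $x$. For $|x|>R$ the region $|y|\ge|x|/2$ is left over, and it produces two terms:
\begin{itemize}
\item $K(x)\int_{|y|\ge|x|/2}f\,dy$, which is harmless, being $\lesssim|x|^{-1-\beta}\int|f|\,|y|^\beta dy$;
\item $\int_{|y|\ge|x|/2}K(x-y)f(y)\,dy$, which contains the singularity $y\approx x$ at arbitrarily large $|x|$.
\end{itemize}
The second term is exactly where the endpoint failure lives, and it is not local. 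A global bound $\|I_1 f\|_{L^4}\lesssim\|f\|_{L^{4/3}}$ cannot be paired with the weight, because $|x|^{\beta/2}\notin L^4(\{|x|>R\})$. More generally, no bound by unweighted norms of $f$ can work, since the weight grows; you must use the decay of $f$ on far annuli.

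The gap can be closed elementarily. Take $x\in A_k=\{2^k\le|x|<2^{k+1}\}$ with $k\ge0$. The part with $|x-y|\ge|x|$ has $|x-y|\gtrsim|y|$, so it is bounded by $C\int_{|y|\ge|x|/2}|y|^{-1}|f|\,dy\lesssim|x|^{-1-\beta}\int|f|\,|y|^\beta dy$. The rest has $y\in\tilde A_k=\{2^{k-1}\le|y|<2^{k+2}\}$. For this piece, combine HLS from $L^p$ with $p$ close to $1$, H\"older on $A_k$, and $\|g\|_{L^p}\le\|g\|_{L^1}^\theta\|g\|_{L^2}^{1-\theta}$ to get
\[
2^{k\beta}\big\|I_1(|f|\mathbf 1_{\tilde A_k})\big\|_{L^2(A_k)}^2\lesssim 2^{k(2+\beta-2\theta(1+\beta))}\Big(\int|f|\,|y|^\beta dy\Big)^{2\theta}\|f\|_{L^2}^{2(1-\theta)},\qquad \theta=\tfrac2p-1.
\]
This sums over $k$ once $\theta>\frac{2+\beta}{2+2\beta}$. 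This is where the moment of order $\beta$, strictly above the output weight $\beta/2$, is genuinely needed.

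The paper does the same job with the grand maximal function. Lemma \ref{L22} converts the $\beta$-moment and $\|f\|_{L^2}$ into $\int\mathcal Mf\,(1+|x|)^{\beta/2}dx$. The term $\mathbf J_2$ in Lemma \ref{L31} then treats precisely your missing region by a dyadic scaling argument.

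Two smaller points.
\begin{itemize}
\item $\int\rho\dot u\,dx=0$ and momentum conservation need a cutoff argument at infinity, since $Du$ and $P$ are only in $L^2$. The paper tests with the divergence-free field $\nabla^\perp((a_2x_1-a_1x_2)\varphi_R)$.
\item In part 2, $|\nabla\Delta^{-1}\rho|\lesssim(1+|x|)^{-1}$ does not follow from $\rho\in L^1\cap L^\infty$ alone. Getting it from the second moment makes $C_0$ depend on $M(t)$, contrary to the statement. Instead use $\nabla\Delta^{-1}\rho\in L^p$ for $2<p<4/(2-\beta)$, with HLS bounds in $m,\bar\rho$ only, and H\"older against $(1+|x|)^{-\beta/2}\in L^{2p/(p-2)}$, as the paper's dyadic sum does.
\end{itemize}
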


Applying the velocity representation theorem, we establish global well-posedness and exponential decay of strong solutions for the density-dependent viscosity model.
\begin{theorem}\label{T1}
Let $(\rho_0,u_0)$ satisfy the condition in Lemma \ref{L41} and \eqref{zero}. There exists a positive constant $\tilde\varepsilon$ determined by $\bar\rho$, $\mu$, $q$, and the initial values 
such that if  
\begin{equation*}
\|\mu(\rho_0)-1\|_{L^\infty}\le \tilde\varepsilon,
\end{equation*}
then the local strong solution $(\rho,u,P)$ provided by Lemma \ref{L41}  extends uniquely to a global strong solution on $\R\times(0,\infty)$ satisfying \eqref{1.7} on every finite time interval, for all $0<\tau<T<\infty$.
Moreover, the following conclusions hold.

1. $($Uniform bounds on $\rho)$  There is a positive constant $\mathbf{C}_1$ such that
\begin{equation}\label{C102}
\sup_{t\geq 0}\|\rho(\cdot,t)\|_{L^\infty}\leq\bar\rho,\quad \sup_{t\geq 0}\left(\big\|\rho(\cdot,t)\,(1+|x|^2)\big\|_{L^1}+\|\rho(\cdot,t)\|_{ H^1\cap W^{1,q}}\right)\leq\mathbf{C}_1.
\end{equation}

2. $($Energy and derivative decay$)$ There are positive constants  $\mathbf{C}_2$ and $\sigma$ such that
\begin{equation}\label{1.8}
\|\sqrt\rho u(\cdot,t)\|_{L^2}+\|\nabla u(\cdot,t)\|_{H^1}+\|P(\cdot,t)\|_{H^{1}}
\leq \mathbf{C}_2\exp\left\{-\sigma t \right\},~~\forall t\geq 1.
\end{equation}

3. $($Whole-space velocity decay$)$ Let $u_\infty$ be the far field velocity in Theorem \ref{T3}. For any $p\in[5,\infty]$, there are positive constants  $\mathbf{C}_3$ and $\sigma$  such that
\begin{equation}\label{CC97}
|u_\infty(t)|+\|u(\cdot,t)-u_\infty(t)\|_{L^p}\leq \mathbf{C}_3\exp\left\{-\sigma t \right\},~~\forall t\geq 1,
\end{equation}
In particular,
\begin{equation}\label{CC96}
\|u\|_{L^\infty}\leq \mathbf{C}_3\exp\left\{-\sigma t \right\},~~\forall t\geq 1,
\end{equation}
therefore the velocity converges exponentially to $0$ uniformly throughout $\R$, including the vacuum region.

The constants $\mathbf{C}_1$, $\mathbf{C}_2$, and $\sigma$ depend on $\bar\rho$, $\mu$, $q$, and initial data, while $\mathbf{C}_3$ additionally depends on $p$.
None of these constants depend on the terminal time $T$.
\end{theorem}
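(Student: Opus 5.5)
The proof is a continuation argument: a priori estimates on $[0,T]$ that do not depend on $T$, combined with Lemma \ref{L41}. The density is transported by a divergence-free field, so $0\le\rho\le\bar\rho$ and $\|\mu(\rho)-1\|_{L^\infty}=\|\mu(\rho_0)-1\|_{L^\infty}\le\tilde\varepsilon$ for all time. Hence hypothesis \eqref{CC90} of Theorem \ref{T3} holds automatically once $\tilde\varepsilon\le\varepsilon(\beta)$ for a fixed $\beta$. We will pick $\beta=4/5$, so that $4/\beta=5$; this is why $p\ge5$ appears in \eqref{CC97}. The argument has three steps: exponential decay of the energy $E(t)=\|\sqrt\rho u\|_{L^2}^2$, control of the weighted quantities through Theorem \ref{T3}, and higher-order bounds that close the bootstrap.

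\textbf{Step 1 (energy decay).} The energy identity gives $E'+4\int\mu(\rho)|Du|^2\,dx=0$, and $\int|Du|^2=\frac12\|\nabla u\|_{L^2}^2$ since $\mathrm{div}\,u=0$. The key point is a Poincar\'e-type inequality $E\le C\|\nabla u\|_{L^2}^2$, which we get from the decomposition $u=u_\infty+v$. Zero total momentum is conserved, so
\[
u_\infty=-\frac1m\int\rho v\,dx ,
\]
and therefore $\|\sqrt\rho u\|_{L^2}\le C\|\sqrt\rho v\|_{L^2}\le C\|\rho\|_{L^{5/3}}^{1/2}\|v\|_{L^5}$.

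We cannot simply apply \eqref{CC101} here, because it involves the weighted norm $\|\nabla u\,|x|^{\beta/2}\|_{L^2}$, not $\|\nabla u\|_{L^2}$. Two options:
\begin{itemize}
\item Prove the $L^2$ Poincar\'e bound directly, using the moment $M(t)$ and a BMO/Hardy duality argument: $\rho(1+|x|^2)\in L^1\cap L^\infty$ puts $\rho$ in a weighted $\mathcal H^1$-type class paired against $u-\bar u\in BMO$.
\item Use the time-weighted quantity $\|\sqrt\rho\dot u\|_{L^2}$ via \eqref{CC102}, giving $E\le C(1+M^{\beta})\|\sqrt\rho\dot u\|^2_{L^2}$.
\end{itemize}
I would attempt the first. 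Either way we need $M(t)$ bounded uniformly in time. From $\frac{d}{dt}M=2\int\rho x\cdot u\,dx$ we get $M'\le C\sqrt{M}\sqrt{E}$, so $M$ stays bounded as long as $\sqrt E$ is integrable in time. This couples the moment bound to the decay, and the whole thing is closed by a bootstrap assumption $M(t)\le 2K$. The bootstrap then yields $E(t)\le E(0)e^{-\sigma t}$.

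\textbf{Step 2 (decay of $\nabla u$ and $\sqrt\rho\dot u$).} Test the momentum equation with $\dot u$. The pressure is handled by the Stokes estimate with viscosity close to $1$, namely $\|\nabla^2u\|_{L^2}+\|\nabla P\|_{L^2}\le C\|\rho\dot u\|_{L^2}$ (the variable-viscosity term is absorbed by smallness of $\tilde\varepsilon$). This gives
\[
\frac{d}{dt}\int\mu(\rho)|Du|^2\,dx+\|\sqrt\rho\dot u\|_{L^2}^2\le C\|\nabla u\|_{L^3}^3\le C\|\nabla u\|_{L^2}^2\|\nabla^2 u\|_{L^2}.
\]
Multiplying by $e^{\sigma t}$ (and by $t$ for $t\le1$) shows that $\|\nabla u\|_{L^2}$ decays exponentially. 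The $\dot u$-equation, tested with $\dot u$ and weighted by $te^{\sigma t}$, then gives exponential decay of $\|\sqrt\rho\dot u\|_{L^2}$ and $\|\nabla\dot u\|_{L^2}$. The convective terms are controlled with Gagliardo--Nirenberg plus the weighted bound \eqref{CC102}; this is where Theorem \ref{T3} replaces the missing $L^p$ control of $u$ near vacuum. Together with the Stokes estimate this proves \eqref{1.8}.

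\textbf{Step 3 (density, velocity, closing).} For $\nabla\rho$ in $L^2\cap L^q$ we have
\[
\frac{d}{dt}\|\nabla\rho\|_{L^r}\le C\|\nabla u\|_{L^\infty}\|\nabla\rho\|_{L^r}.
\]
We need $\int_0^\infty\|\nabla u\|_{L^\infty}\,dt<\infty$, which follows from the $W^{1,q}$ Stokes estimate $\|\nabla^2u\|_{L^q}\le C\|\rho\dot u\|_{L^q}$ and the exponential decay of $\|\nabla\dot u\|_{L^2}$ together with the time weights. This gives \eqref{C102}. The velocity statements \eqref{CC97} follow directly from \eqref{CC100}, \eqref{CC101}, \eqref{CC102} and the decay of $\|\sqrt\rho\dot u\|_{L^2}$, using $\|\nabla u(1+|x|)^{\beta/2}\|_{L^2}\le\|\nabla u\|_{L^2}+\|\nabla u|x|^{\beta/2}\|_{L^2}$. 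For $p=\infty$ we interpolate the $L^5$ bound with $\|\nabla v\|_{L^q}$ (Gagliardo--Nirenberg), which gives \eqref{CC96}.

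Finally, the bootstrap hypothesis on $M$ improves to $M\le K$ once $\tilde\varepsilon$ is small and $K$ is chosen in terms of the data. With the $T$-independent bounds in hand, Lemma \ref{L41} and uniqueness extend the solution globally.

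\textbf{Main obstacle.} I expect the hardest step to be the Poincar\'e-type bound and the closure of the moment $M$ in Step 1. The weighted estimate \eqref{CC102} depends on $M$, while bounding $M$ needs decay of $E$. The first thing I would try is the Hardy–BMO pairing $\int\rho(u-\bar u)\,dx$ to get $E\le C(1+M)\|\nabla u\|_{L^2}^2$ without weights, and then integrate $M'\le C\sqrt{M}e^{-\sigma t/2}$ by a Gronwall argument.
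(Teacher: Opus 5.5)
Your overall architecture matches the paper: propagation of $\|\mu(\rho)-1\|_{L^\infty}$, a Poincaré-type bound for energy decay, higher-order bounds, continuation via Lemma \ref{L41}, and the final use of \eqref{CC100}--\eqref{CC102}. But two steps fail as written.

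\textbf{Closing the moment bound.} With a Poincaré constant linear in $M$, the bootstrap $M\le 2K$ does not close.
\begin{itemize}
\item The decay rate is $\sigma\sim(1+K)^{-1}$, so $\int_0^\infty\|\sqrt\rho u\|_{L^2}\,dt\lesssim(1+K)\|\sqrt{\rho_0}u_0\|_{L^2}$.
\item Then $\frac{d}{dt}\sqrt{M}\le\|\sqrt\rho u\|_{L^2}$ only gives $M\lesssim M(0)+(1+K)^2\|\sqrt{\rho_0}u_0\|_{L^2}^2$.
\item This is not $\le K$ unless the initial energy is small. Smallness of $\tilde\varepsilon$ is a viscosity-contrast condition and does not help.
\end{itemize}
The paper breaks the circularity without assuming any bound on $M$:
\begin{itemize}
\item It uses the decay-free growth bound $M(t)\le C(1+t)^2$ from \eqref{k37}.
\item It pairs this with a Poincaré-type inequality whose constant is sublinear in $M$. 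By Theorem \ref{T3} with $\beta_0<1/2$ and zero momentum, $\int\rho|u|^2\,dx\le\int\rho|u-u_\infty|^2\,dx\le C(1+M^{\beta_0})\|\sqrt\rho\dot u\|_{L^2}^2\le C(1+t)^{2\beta_0}\|\sqrt\rho\dot u\|_{L^2}^2$.
\item The dissipation of the combined functional $\int(\rho|u|^2+C_1\mu(\rho)|Du|^2)\,dx$ contains $\|\sqrt\rho\dot u\|_{L^2}^2$. This yields the stretched-exponential bound \eqref{3.16}, which is integrable in time.
\item Hence $M$ is uniformly bounded, and only then does the decay become genuinely exponential.
\end{itemize}
Your first option could be repaired the same way only if the unweighted constant grows slower than $M^{1/2}$, and if you feed it \eqref{k37} instead of a bootstrap. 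Note also that the Hardy--BMO pairing you describe controls the linear quantity $\int\rho(u-\bar u)\,dx$, not the quadratic $\int\rho|u-\bar u|^2\,dx$. Similarly, $\beta=4/5$ is too large for this step. The paper uses $\beta_0<1/2$ here and reserves $\beta=4/5$ for the $L^5$ bound once $M$ is known to be bounded.

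\textbf{The density-dependent viscosity.} The Stokes bounds $\|\nabla^2u\|_{L^p}+\|\nabla P\|_{L^p}\le C\|\rho\dot u\|_{L^p}$ you use in Steps 2 and 3 hold only for constant viscosity. Estimate \eqref{2.5} contains the extra term $\|\nabla\mu(\rho)\cdot Du\|_{L^p}$, which smallness of $\|\mu(\rho)-1\|_{L^\infty}$ does not absorb.

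In Step 2 this can be avoided, as in Lemma \ref{le3}:
\begin{itemize}
\item use \eqref{C218} for $\|\nabla u\|_{L^3}$;
\item treat the pressure through $P=-\Delta^{-1}\mathrm{div}(\rho\dot u)+\Delta^{-1}\mathrm{div}\,\mathrm{div}(2\mu(\rho)Du)$, using $\mathcal H^1$--BMO duality and the div--curl lemma.
\end{itemize}

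In Step 3 it is the central difficulty. $\|\nabla u\|_{L^\infty}$ itself depends on $\|\nabla\rho\|_{L^q}$, so the transport estimate for $\nabla\rho$ becomes superlinear and a linear Gronwall argument is unavailable. The paper handles this as follows:
\begin{itemize}
\item It splits the kernel at the scale $\lambda(t)=\varepsilon^{3/4}(1+\|\nabla\rho\|_{L^4})^{-3/4}$ to obtain \eqref{C423}.
\item This leads to \eqref{C424}, of the form $\frac{d}{dt}\|\nabla\rho\|_{L^4}\lesssim(\varepsilon^{1/4}\|\nabla\rho\|_{L^4}^{7/4}+\|\nabla\rho\|_{L^4})\,g(t)$.
\item One has $\int_0^\infty g\,dt<\infty$, using \eqref{3.23} against the exponential decay.
\item It closes by taking $\varepsilon$ small in terms of $\|\nabla\rho_0\|_{L^4}$ and $\int_0^\infty g\,dt$.
\end{itemize}
This is why $\tilde\varepsilon$ must depend on the initial data, and your proposal contains no mechanism playing this role.
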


We next consider weak solutions of the system \eqref{C101}--\eqref{1.2}. If the initial density is bounded without spatial regularity assumptions, then for the constant-viscosity case, Theorem \ref{T3} extends to this setting and yields whole-space exponential decay along with uniform-in-time regularity of the flow map. 
We mention that density patches are an important special case and the global well-posedness of weak solutions has already been established in \cite{DM2,lsz,HSWZ}.
\begin{theorem}\label{T2}
For the constant $\bar\rho>0$, suppose that the initial values $(\rho_0, u_0)$ satisfy
\begin{equation}\label{C103}
\begin{split}
0\leq \rho_0\leq \bar\rho,~~ \rho_0\,(1+|x|^2) \in L^1,~~ \sqrt{\rho_0}u_0\in L^2,~~ u_0\in L^2\cap D^1_{0,0},
\end{split}
\end{equation}
and \eqref{zero}. 
If $\mu(\rho)\equiv 1$, then
the system \eqref{C101}--\eqref{1.2} admits a unique global weak solution $(\rho, u,P)$ and Theorem \ref{T3} holds for $u$. For any $0<\tau<T<\infty$, $(u,P)$ satisfy \eqref{1.7}, but the regularity of density reduces to $\rho\in L^\infty(0,T; L^\infty)$ and $
\rho\,(1+|x|^2)\in  L^\infty(0,T;L^1)$.
Moreover, the following conclusions hold.

1. $($Uniform bounds on $\rho)$  There is a positive constant $\mathbf{C}_1$ such that
\begin{equation}\label{C105}
\sup_{t\geq 0}\|\rho(\cdot,t)\|_{L^\infty}\leq\bar\rho,\quad \sup_{t\geq 0}\big\|\rho(\cdot,t)\,(1+|x|^2)\big\|_{L^1}\leq\mathbf{C}_1. 
\end{equation}

2. $($Energy and derivative decay$)$ There are positive constants $\mathbf{C}_2$ and $\sigma$ such that
\begin{equation}\label{C106}
\|\sqrt\rho u(\cdot,t)\|_{L^2}+\|\nabla u(\cdot,t)\|_{H^1}+\| P(\cdot,t)\|_{H^{1}}
\leq \mathbf{C}_2\exp\left\{-\sigma t\right\},~~\forall t\geq 1.
\end{equation}

3. $($Whole-space velocity decay$)$ Let $u_\infty$ be the far field velocity in Theorem \ref{T3}. For any $p\in[5,\infty]$, there are positive constants  $\mathbf{C}_3$ and $\sigma$  such that
\begin{equation}\label{CC98}
|u_\infty(t)|+\|u(\cdot,t)-u_\infty(t)\|_{L^p}\leq \mathbf{C}_3\exp\left\{-\sigma t \right\},~~\forall t\geq 1.
\end{equation}
In particular,
\begin{equation}\label{CC95}
\|u\|_{L^\infty}\leq \mathbf{C}_3\exp\left\{-\sigma t \right\},~~\forall t\geq 1,
\end{equation}
therefore the velocity converges exponentially to $0$ uniformly throughout $\R$, including the vacuum region.

4. $($Uniform regularity and convergence of flow maps$)$ Let $X(t,x)$ be the flow map defined by
\begin{equation}\label{C109}
\partial_tX_t(x)=u(X_t(x),t),~~
X_0(x)=x.
\end{equation}
For any $\gamma\in(0,1)$, there is a positive constant $\mathbf{C}_4$ such that for any  $T>0$,
\begin{equation}\label{C107}
\begin{split}
\int_0^T\|u(\cdot,t)\|_{L^\infty}\,dt\leq\mathbf{C}_4,\quad
\sup_{0\leq t\leq T}\big\|\nabla X_t(\cdot,t)\big\|_{C^{\gamma}}\leq\mathbf{C}_4,
\end{split}
\end{equation} 
moreover the flow map is uniformly bi-Lipschitz,
\begin{equation}\label{C118}
\mathbf{C}_4^{-1}|x-y|\leq |X_T(x)-X_T(y)|\leq\mathbf{C}_4|x-y|,~~\forall T\geq 0,~x,y\in\R.
\end{equation}

There is a measure preserving $C^{1,\gamma}$ diffeomorphism $X_\infty:\R\rightarrow\R$ given by
\begin{equation*}
X_\infty(x)=\lim_{t\rightarrow\infty}X_t(x)=x+\lim_{t\rightarrow\infty}\int_0^tu(X_s(x),s)\,ds,
\end{equation*}
such that
\begin{equation}\label{CC94}\|X(t,\cdot)-X_\infty(\cdot)\|_{L^\infty}\leq \mathbf{C}_4\exp\left\{-\sigma t \right\},~~\forall t\geq 1.
\end{equation}
Consequently, every trajectory converges to a limiting position, uniformly with respect to its initial position.

The constants $\mathbf{C}_1$, $\mathbf{C}_2$, and $\sigma$ depend on $\bar\rho$,   and initial data, while $\mathbf{C}_3$ and $\mathbf{C}_4$ additionally depend on $p$ and $\gamma$, respectively.
None of these constants depend on the terminal time $T$.
\end{theorem}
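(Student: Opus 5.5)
Since $\mu(\rho)\equiv1$ and the data are only bounded measurable, I will construct the solution as a limit of strong solutions: mollify $\rho_0$ (keeping $0\le\rho_0^\delta\le\bar\rho$, uniform moments, zero total momentum after a small correction of $u_0$), apply Theorem~\ref{T1} (its smallness hypothesis is trivially met since $\mu\equiv1$), and check that all bounds in the proof of Theorem~\ref{T1} that are used here depend only on $\bar\rho$, $\|\rho_0(1+|x|^2)\|_{L^1}$, $\|\sqrt{\rho_0}u_0\|_{L^2}$, $\|u_0\|_{L^2\cap D^1}$, and not on $\|\rho_0\|_{H^1\cap W^{1,q}}$. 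For constant viscosity the Stokes system $-\Delta u+\nabla P=-\rho\dot u$ decouples from density regularity, which is exactly why this works. Uniqueness I take from the known weak–strong/Lagrangian theory (\cite{DM2,HSWZ,lsz}), using that $\nabla u\in L^1(0,T;L^\infty)$, which will follow from item 4. Theorem~\ref{T3} passes to the limit because \eqref{CC99}--\eqref{CC102} are stable under the uniform bounds.

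\textbf{Items 1--3.} The bound $\|\rho\|_{L^\infty}\le\bar\rho$ comes from transport. For the moment $M(t)$, I use $\frac{d}{dt}\int\rho|x|^2=2\int\rho x\cdot u$, hence $|M'|\le 2M^{1/2}\|\sqrt\rho u\|_{L^2}$. Once the energy decays exponentially, $M$ stays uniformly bounded. For exponential energy decay, I use
\[
\frac{d}{dt}\|\sqrt\rho u\|_{L^2}^2+2\|\nabla u\|_{L^2}^2=0
\]
together with a Poincaré-type inequality $\|\sqrt\rho u\|_{L^2}\le C\|\nabla u\|_{L^2}$. That inequality needs control of $\|\rho(u-u_\infty)\|$ and of $|u_\infty|$, both supplied by Theorem~\ref{T3} with the moment bound. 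To avoid circularity, I run a bootstrap on $\sup M(t)$.

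The higher-order decay works as follows:
\begin{itemize}
\item test the momentum equation with $u_t$ and then with $\dot u$ (time-weighted by $t$, $e^{\sigma t}$) to get decay of $\|\nabla u\|_{L^2}$ and $\|\sqrt\rho\dot u\|_{L^2}$;
\item apply Stokes $L^2$ estimates, $\|\nabla^2u\|_{L^2}+\|\nabla P\|_{L^2}\le C\|\sqrt\rho\dot u\|_{L^2}$, to get \eqref{C106};
\item obtain \eqref{CC98} from \eqref{CC100}--\eqref{CC102} with $\beta=4/p$, interpolating with $\|\nabla u\|_{L^2}$, $\|\nabla^2u\|_{L^2}$ for the $L^\infty$ case.
\end{itemize}
The restriction $p\ge5$ fits $\beta=4/p<1$.

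\textbf{Item 4.} For $\int_0^T\|u\|_{L^\infty}$, I split into $[0,1]$ and $[1,T]$. On $[1,T]$ the integrand is controlled by \eqref{CC95}. On $[0,1]$ I use the estimate behind \eqref{CC95}, $|u_\infty|+\|u-u_\infty\|_{L^p}\lesssim\|\sqrt\rho\dot u\|_{L^2}$, plus Gagliardo–Nirenberg, and the short-time weights $\int_0^1 t\|\sqrt\rho\dot u\|^2\le C$; this gives an integrable singularity like $t^{-1/2}$.

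For the $C^{\gamma}$ bound of $\nabla X$, I need $\int_0^\infty\|\nabla u\|_{C^\gamma}\,dt<\infty$. By $W^{2,r}\subset C^{1,\gamma}$ with $r=2/(1-\gamma)$, this reduces to controlling $\|\nabla^2u\|_{L^r}\lesssim\|\rho\dot u\|_{L^r}\lesssim\|\nabla\dot u\|_{L^2}^{a}\|\sqrt\rho\dot u\|_{L^2}^{1-a}$, with the same $u_\infty$-type decomposition applied to $\dot u$. This is where I expect the \emph{main obstacle}. One needs a time-weighted $\int_0^1 t^{k}\|\nabla\dot u\|^2$ bound and an exponential tail for $t\ge1$ so that the time integral converges near $0$ uniformly in the density regularity; it may require $L^r$ rather than $L^2$ weights and a careful choice of $a<1$.

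Given this, Grönwall on $\partial_t\nabla X=\nabla u(X)\nabla X$ gives uniform $\|\nabla X\|_{L^\infty}$ and its Hölder seminorm. The same argument applied to $X^{-1}$ (volume preservation from $\mathrm{div}\,u=0$) gives \eqref{C118}. Finally, $X_\infty$ exists with rate \eqref{CC94} because $\|X_t-X_s\|_{L^\infty}\le\int_s^t\|u\|_{L^\infty}$ is exponentially small. $C^{1,\gamma}$ bounds pass to the limit by Arzelà–Ascoli, and measure preservation and the bi-Lipschitz bounds survive the uniform limit, making $X_\infty$ a $C^{1,\gamma}$ diffeomorphism.
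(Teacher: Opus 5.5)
Your handling of well-posedness (the paper simply cites \cite{DM2,HSWZ} and reuses Lemmas \ref{LL33}--\ref{LL34}) and your plan for item 4 are broadly in line with the paper. The decay mechanism you propose for items 1--3, however, has a genuine gap.

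\textbf{Main gap.} Theorem \ref{T3} does not give $\|\sqrt\rho u\|_{L^2}\le C\|\nabla u\|_{L^2}$. The right-hand sides of \eqref{CC100}--\eqref{CC101} are the weighted quantity $\|\nabla u\,|x|^{\beta/2}\|_{L^2}$, which $\|\nabla u\|_{L^2}$ does not control. Moreover, \eqref{CC102} bounds that quantity only by $\|\sqrt\rho\dot u\|_{L^2}$; this is exactly where the Hardy-space estimate on $\rho\dot u$ enters. What Theorem \ref{T3} plus zero momentum actually gives is $\|\sqrt\rho u\|_{L^2}^2\le\int\rho|u-u_\infty|^2dx\le C(1+M^{\beta})\|\sqrt\rho\dot u\|_{L^2}^2$. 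So the basic energy identity $\frac{d}{dt}\|\sqrt\rho u\|_{L^2}^2+2\|\nabla u\|_{L^2}^2=0$ cannot be closed on its own.

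The paper proceeds as follows:
\begin{itemize}
\item It couples the basic energy identity with the $\dot u$-estimate \eqref{3.13}, using \eqref{3.9}, to obtain \eqref{3.14}. The dissipation there contains $\|\sqrt\rho\dot u\|_{L^2}^2$.
\item It avoids a bootstrap. Inserting the a priori growth $M(t)\le C(1+t)^2$ of \eqref{k37}, with an exponent $\beta_0<1/2$, gives $\frac{d}{dt}\mathcal E+c(1+t)^{-2\beta_0}\mathcal E\le0$ for $\mathcal E=\int(\rho|u|^2+C_1|Du|^2)dx$.
\item This yields the stretched-exponential decay \eqref{3.16}. Hence $\int_0^\infty\|\sqrt\rho u\|_{L^2}dt<\infty$, and $\sup_tM(t)<\infty$ by \eqref{C417}.
\item Only then does it obtain true exponential decay \eqref{CC404}.
\end{itemize}
Your unspecified bootstrap on $\sup M$ hides the same issue: for large data it closes only if the constant in your Poincar\'e-type inequality grows more slowly than $M^{1/2}$. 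Such a weighted Poincar\'e inequality for zero-momentum fields can plausibly be proved directly, via dyadic balls, $\rho\le\bar\rho$, the second moment and Moser--Trudinger, with a constant logarithmic in $M$. But that would be an additional lemma, not a consequence of Theorem \ref{T3}, and you neither state nor prove it.

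\textbf{Smaller points.}
\begin{itemize}
\item Stokes $L^2$ estimates control $\nabla P$, not $P$, in $L^2$. The $\|P\|_{L^2}$ part of \eqref{C106} needs $\rho\dot u\in\mathcal H^1(\R)$, i.e.\ again the cancellation $\int\rho\dot u\,dx=0$ plus the moment bound, as in \eqref{pe}.
\item In item 4, the bound $\int_0^1t\|\sqrt\rho\dot u\|_{L^2}^2dt\le C$ by itself leaves a non-integrable $t^{-1}$ after Cauchy--Schwarz. Use instead the unweighted bound of Lemma \ref{le3} (available since $\nabla u_0\in L^2$), or $\sup_t\,t\|\sqrt\rho\dot u\|_{L^2}^2\le C$ from Lemma \ref{le4}.
\item The obstacle you anticipate for $\int\|\nabla u\|_{C^\gamma}dt$ is handled in the paper without any decomposition of $\dot u$. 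The weighted embedding \eqref{3.23} gives $\|\rho\dot u\|_{L^p}\le C(1+t)^5(\|\sqrt\rho\dot u\|_{L^2}+\|\nabla\dot u\|_{L^2})$. After interpolation with $\|\rho\dot u\|_{L^2}$, this is integrated using the $\delta(t)$-weights of Lemma \ref{le4} near $t=0$ and the exponential tail \eqref{3.11}, exactly as in \eqref{3.22}--\eqref{3.24}.
\end{itemize}
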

\begin{remark}
We now consider density patch problems, with $\rho_0(x)=\mathbf{1}_{D}(x)$. Note that Theorem \ref{T2} requires no regularity assumption on $\partial D$ and holds for measurable density patches.
In particular, if $D$ is a $C^{1,\gamma}$ bounded domain in $\R$, then it holds that 
$\rho(x,t)=\mathbf{1}_{X_t(D)}(x)$
and $\{X_t(D)\}_{t\geq0}$ is a family of domains with uniform $C^{1,\gamma}$ regularity. Moreover, no cusp forms during the dynamic process
$X_t(D)\rightarrow X_\infty(D)$, and the limiting configuration $X_\infty(D)$ is a $C^{1,\gamma}$ domain diffeomorphic to $D$.






\end{remark}


Our proof builds on two cancellation mechanisms inherent to the system\eqref{C101}, along with a resulting damping‑related decay property.

{\it 1. The cancellation of $\rho\dot u$ and weighted Hardy estimates of velocity.}

The Caffarelli-Kohn-Nirenberg inequality reduces the control of the velocity to a weighted estimate, 
\begin{equation}\label{CC105}
  \|u-u_\infty\|_{L^{2/\beta}}\leq C\|\nabla u\cdot|x|^\beta\|_{L^2},~~0<\beta<1,
\end{equation}
therefore the key step is to bound the right hand side of \eqref{CC105} and 
identify the far field velocity $u_\infty$.

However, the weighted estimate of $\nabla u$ in \eqref{CC105} encounters an endpoint obstruction in dimension 2:
Indeed at the level of principal elliptic structure, \eqref{C101}$_2$ is modeled by
\begin{equation}\label{C301}
-\Delta u=f\quad\mathrm{in}~\mathbb{R}^2.
\end{equation}
with $f$ representing the inertial forcing $\rho\dot u$. The standard Hardy-Littlewood-Sobolev inequality does not give weighted $L^2$ estimates of $\nabla u$ from $f\in L^p$ alone. 
In fact, testing against a standard cutoff function $\varphi_R(x)=\varphi(x/R)$, we obtain
$$\left|\int_{\mathbb R^2}f\varphi_R\,dx\right|
\leq CR^{-\beta}
\|\nabla u\cdot|x|^\beta\|_{L^2(B_{2R}\setminus B_R)}.$$
Letting $R\rightarrow\infty$, we find that the cancellation condition $\int_\R f(x)\,dx=0$ is {\bf necessary} to ensure that $\|\nabla u\cdot|x|^\beta\|_{L^2}$ is finite. 

Our main observation is that the above cancellation condition (combined with proper moment conditions)  is also {\bf sufficient} to provide weighted estimates of $\nabla u$.
This is quantified by the weighted Hardy-space estimate (see Section \ref{S32})
\begin{equation}\label{CC110}
\|\nabla u\,|x|^\beta\|_{L^2}\leq C\int_\R\mathcal{M}f\cdot|x|^\beta\,dx\leq C\big(\int_\R|f|\cdot(1+|x|)^\alpha\,dx+\|f\|_{L^p}\big),
\end{equation}
which holds for any $0<\beta<\alpha\leq 1$ and $p>1$ with $\int_\R f\,dx=0$, and $\mathcal{M}f$ denotes the grand maximal function given in Definition \ref{D21}. 
For the momentum equation, the zero-mean condition of $f$ corresponds to the first cancellation mechanism of \eqref{C101}$_2$, $$\int_\R \rho\dot u\,dx=0,$$ 
which is proved by applying a divergence-free cutoff to \eqref{C101}$_2$ (see Section 3.3). 
Such cancellation along with energy estimates and moment bounds yields \eqref{CC105}. 
It remains to identify and estimate the far field velocity $u_\infty$.

{\it 2. The momentum conservation and identification of the far-field velocity.}

Let $u=u_\infty+v$ with $v=u-u_\infty$ be the decomposition obtained above.  
 Multiplying it by $\rho$ and integrating over $\R$ give 
\begin{equation}\label{CC106}
mu_\infty=\int_\R\rho u\,dx-\int_\R\rho v\,dx,~~\mathrm{with}~~m=\int_\R\rho\,dx=\int_\R\rho_0\,dx.
\end{equation}
Testing \eqref{C101}$_2$ with the divergence-free cutoff also yields conservation of total momentum. Together with the zero initial momentum condition \eqref{zero}, this gives the second cancellation mechanism of the system (see also Section 3.3),
\begin{equation}\label{CC107}
\frac{d}{dt}\int_\R\rho u\,dx=0,~~\Rightarrow \int_\R\rho u\,dx=\int_\R\rho_0 u_0\,dx=0.
\end{equation}
Substituting \eqref{CC107} to \eqref{CC106} implies
\begin{equation}\label{CC108}
  u_\infty=\frac{-1}{m}\int_\R\rho v\,dx=\frac{1}{m}\int_\R\nabla\Delta^{-1}\rho\cdot\nabla u\,dx,
\end{equation}
since formally we have $v=\Delta^{-1}(\Delta u)$.
We mention that the integral in \eqref{CC108} converges absolutely due to the control of $\|\nabla u\,|x|^\beta\|_{L^2}$ provided by Hardy type estimates.

{\it 3. A damping mechanism and exponential decay.}

The two cancellation mechanisms and Hardy type estimates provide a damping effect in \eqref{C101}$_2$. 
This structure can be illustrated by a simplified model:
\begin{equation}\label{sm}
\rho(x)\,\partial_tu(x,t)-\Delta u(x,t)=0\quad  \text{in} ~\R\times[0,\infty),
\end{equation}
with a non-negative weight $\rho$ in the Schwartz space.
Taking $L^2$ inner products of \eqref{sm} with $u$ and $u_t$, then adding them together, we obtain the basic energy estimate,
\begin{equation}\label{CC112}
\frac{1}{2}\,\frac{d}{dt}\left(\|\sqrt\rho u\|_{L^2}^2+\|\nabla u\|_{L^2}^2\right)+\left(\|\nabla u\|_{L^2}^2+\|\sqrt{\rho}u_t\|_{L^2}^2\right)=0.
\end{equation}
The conservation of momentum $\int_\R\rho u\,dx=\int_\R\rho_0u_0\,dx=0$ and Hardy type estimates \eqref{CC110} ensure that
\begin{equation}\label{CC111}
\|\sqrt{\rho}u\|_{L^2}^2\leq\|\sqrt{\rho}(u-u_\infty)\|_{L^2}^2\leq C\|u-u_\infty\|_{L^{2/\beta}}^2\leq C\,\|\nabla u\,|x|^\beta\|_{L^2}^2\leq C\,\|\sqrt{\rho}u_t\|_{L^2}^2.
\end{equation}
Note that $\int_\R\rho |u_t|\,(1+|x|)^\alpha\,dx\leq C\|\sqrt\rho u_t\|_{L^2}$ since $\rho$ is rapidly decreasing. Combining \eqref{CC111} with \eqref{CC112}, we therefore obtain the exponential 
decay by Gronwall's inequality. This argument is adapted to the full nonlinear system in Lemma \ref{le5}.

The rest of this paper is organized as follows: In Section \ref{S2}, we collect some basic tools and introduce a specific definition of Hardy space $\mathcal{H}^1(\R)$.  Then we study the weighted $L^2$ estimates for $\nabla u$  in $\R$ with the help of Hardy space in Section \ref{S3} and prove the crucial velocity representation Theorem \ref{T3}. 
After that, we establish all necessary a priori estimates and the exponential decays of solutions in Section \ref{S4}, and finish the proofs of Theorems \ref{T1}--\ref{T2} in Section \ref{S5}.

\section{Analytic Tools}\label{S2}
\quad  
Let $\mathcal{S}$ and $\mathcal{S}'$ denote the spaces of Schwartz functions and tempered distributions in $\R$ respectively,
meanwhile we set $B_N\triangleq \{x\in \mathbb{R}^2\big|\,|x|<N\}$ for $N>0$.
First, we show the well-known Caffarelli-Kohn-Nirenberg inequality to give important weighted estimates in $\R$. Here we cite a part of it, see \cite{ckn,cw} for the full range of indices.
\begin{lemma}\label{pl2}
For any $u\in C^\infty(\mathbb{R}^2)$ and real numbers $\alpha$, $\beta$ satisfying
$$\beta>0,\quad \beta-1\leq\alpha<\beta,\quad 
p=\frac{2}{\beta-\alpha},$$ there exists a constant $u_\infty$ determined by $u$ and a positive  constant $C$ determined only by $\alpha$ and $\beta$ such that
\begin{equation*}
\left(\int_\R|u-u_\infty|^p\,|x|^{p\alpha}\,dx\right)^{1/p}\leq C\left(\int_\R|\nabla u|^2\,|x|^{2\beta}\,dx\right)^{1/2}.
\end{equation*}
\end{lemma}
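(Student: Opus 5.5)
This is the two-dimensional case of the Caffarelli--Kohn--Nirenberg inequality. I would prove it by a radial Hardy-type estimate on dyadic annuli, which also produces the constant $u_\infty$. We may assume the right-hand side $A^2:=\int|\nabla u|^2|x|^{2\beta}dx$ is finite. Set $\gamma=\beta-\alpha\in(0,1]$, so $p=2/\gamma\ge2$.

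\textbf{Step 1 (the far-field constant).} Write $x=r\theta$ and $\bar u(r)=\frac{1}{2\pi}\int_{S^1}u(r\theta)\,d\theta$. By Cauchy--Schwarz,
\begin{equation*}
\int_1^\infty|\bar u'(r)|\,dr\le C\int_{|x|>1}|\nabla u|\,|x|^{-1}\,dx\le C\,A\Big(\int_{|x|>1}|x|^{-2-2\beta}dx\Big)^{1/2}<\infty,
\end{equation*}
since $\beta>0$. Hence $u_\infty:=\lim_{r\to\infty}\bar u(r)$ exists. The same computation on $\{|x|>R\}$ gives the tail bound
\begin{equation*}
|\bar u(R)-u_\infty|\le C R^{-\beta}\,\|\nabla u\,|x|^\beta\|_{L^2(|x|>R)}.
\end{equation*}

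\textbf{Step 2 (dyadic Poincar\'e--Sobolev).} Let $A_k=\{2^k<|x|<2^{k+1}\}$, and let $u_k$ be the average of $u$ on $A_k$. Scale $A_k$ to $A_0$ and apply Sobolev--Poincar\'e there ($H^1\subset L^p$ on a fixed annulus, valid for every $p<\infty$). This gives
\begin{equation*}
\|u-u_k\|_{L^p(A_k)}\le C\,2^{k(2/p)}\|\nabla u\|_{L^2(A_k)}.
\end{equation*}
Multiplying by the weight $|x|^\alpha\sim2^{k\alpha}$ and using $2/p+\alpha=\beta$,
\begin{equation*}
\||x|^\alpha(u-u_k)\|_{L^p(A_k)}\le C\,\||x|^\beta\nabla u\|_{L^2(A_k)}=:Ca_k .
\end{equation*}
Next, comparing averages on adjacent annuli with the same Poincar\'e inequality gives $|u_k-u_{k+1}|\le C2^{-k\beta}(a_k+a_{k+1})$. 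Since $\beta>0$, Step 1 shows $u_k\to u_\infty$ as $k\to\infty$, and therefore
\begin{equation*}
|u_k-u_\infty|\le C\sum_{j\ge k}2^{-j\beta}a_j .
\end{equation*}

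\textbf{Step 3 (summation, the main obstacle).} We need
\begin{equation*}
\sum_{k\in\mathbb Z}\||x|^\alpha(u-u_\infty)\|_{L^p(A_k)}^p\le C\Big(\sum_k a_k^2\Big)^{p/2}.
\end{equation*}
The first contribution is $\sum_k a_k^p\le(\sum_k a_k^2)^{p/2}$, because $p\ge2$. The second is
\begin{equation*}
\sum_k |u_k-u_\infty|^p\,2^{k(\alpha p+2)}=\sum_k|u_k-u_\infty|^p\,2^{k\beta p}.
\end{equation*}
Set $b_k=2^{k\beta}\sum_{j\ge k}2^{-j\beta}a_j=\sum_{j\ge k}2^{-(j-k)\beta}a_j$. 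This is a discrete convolution of $(a_j)$ with an $\ell^1$ kernel (because $\beta>0$), so Young's inequality gives $\|b\|_{\ell^p}\le C\|a\|_{\ell^p}\le C\|a\|_{\ell^2}$.

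Summing over $k$ yields the inequality. The constant depends only on $\alpha,\beta$: through the scale-invariant Poincar\'e constants, which are fixed once $p$ is fixed, and through $\sum_j 2^{-j\beta}$.

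The delicate points are two. First, $u_\infty$ must be the limit at infinity rather than at the origin; this is what makes the geometric sums converge, and it uses $\beta>0$. Second, the range $\beta-1\le\alpha$, i.e. $p\ge2$, is exactly what allows the embedding $\ell^2\subset\ell^p$. Smoothness of $u$ ensures there is no issue near the origin: the annuli with $k\to-\infty$ are handled by the same scaled estimates, with no boundary terms.
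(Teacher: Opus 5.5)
Your argument is correct. It is a genuinely different route only in the sense that the paper gives no proof of Lemma \ref{pl2}: it simply quotes the Caffarelli--Kohn--Nirenberg inequality from \cite{ckn,cw}.

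Your proof is self-contained and elementary. It uses the scaled Sobolev--Poincar\'e inequality on dyadic annuli, the telescoping bound $|u_k-u_\infty|\le C\sum_{j\ge k}2^{-j\beta}a_j$, discrete Young with an $\ell^1$ kernel, and $\ell^2\subset\ell^p$. (The comparison of $u_k$ with $u_{k+1}$ should be done via Poincar\'e on the union $\{2^k<|x|<2^{k+2}\}$ rather than on each annulus separately, but that is routine.)

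Your approach makes transparent where each hypothesis enters:
\begin{itemize}
\item $\beta>0$ gives the convergent geometric sums and the limit at infinity;
\item $\alpha<\beta$ gives $p<\infty$ in the annular embedding;
\item $\alpha\ge\beta-1$ gives $p\ge 2$.
\end{itemize}
It also constructs $u_\infty$ explicitly as the limit of circular means, with the tail bound $|\bar u(R)-u_\infty|\le CR^{-\beta}\|\nabla u\,|x|^\beta\|_{L^2(|x|>R)}$. This is exactly the far-field interpretation the paper relies on later. Since $p\alpha+2=p\beta>0$ makes $|x|^{p\alpha}$ non-integrable at infinity, your constant is the only one for which the left side can be finite. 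So it coincides with the far-field velocity of Theorem \ref{T3}, where $u-u_\infty\in L^{4/\beta}$ with the same exponent.

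The citation buys generality: the full CKN family (all dimensions, interpolation with $L^q$ norms of $u$, the complete admissible range of indices) and the sharp constants of \cite{cw}. However, those results are formulated for compactly supported functions or their completions. Applying them to $u-u_\infty$ for an arbitrary smooth $u$ therefore tacitly requires an extra cutoff/approximation step, and your dyadic argument bypasses that step entirely.
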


While for the general weight $\rho$, the next lemma provides corresponding weighted estimates, see \cite{lx} for the proof.
\begin{lemma} 
Suppose that $\rho$ is a non-negative function in $\R$ satisfying
\begin{equation}\notag
\|\rho\|_{L^\infty}\leq M_1,\quad \int_{B_{N}}\rho\, dx\geq M_2,\quad \int_\R\rho\,(1+|x|^2)\,dx\leq M_3, 
\end{equation}
with $N\geq 1$ and $M_1,M_2,M_3\in(0,\infty)$. Then for any $p\in [2,\infty)$, there exists a positive constant $C$ determined only by $M_1$, $M_2$, and $p$ such that for any $u\in C_0^\infty(\R)$,
\begin{equation}\label{2.1}
\left(\int_\R\rho|u|^p\, dx\right)^{1/p}\leq CN^3\left(1+M_3\right)\left(\|\sqrt\rho u\|_{L^2}+\|\nabla u\|_{L^2}\right).
\end{equation}
\end{lemma}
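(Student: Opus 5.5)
This is the weighted Sobolev inequality \eqref{2.1}, cited from \cite{lx}; I would reprove it by localization plus a Poincaré-type argument. The idea is to use the mass of $\rho$ inside $B_N$ to control $u$ on a ball, and the moment bound to control the tail of $\rho$ far away, where $u$ may grow only logarithmically.

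\textbf{Step 1: average of $u$ on a ball.} Set $\bar u_N=\frac{1}{|B_N|}\int_{B_N}u\,dx$. By Poincaré on $B_N$ we have $\|u-\bar u_N\|_{L^2(B_N)}\le CN\|\nabla u\|_{L^2}$. Using $\int_{B_N}\rho\ge M_2$ and $\rho\le M_1$,
$$M_2|\bar u_N|\le\int_{B_N}\rho|u|+\int_{B_N}\rho|u-\bar u_N|\le \sqrt{M_1|B_N|}\big(\|\sqrt\rho u\|_{L^2}+CN\|\nabla u\|_{L^2}\big),$$
so $|\bar u_N|\le CN^2(\|\sqrt\rho u\|_{L^2}+\|\nabla u\|_{L^2})$.

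\textbf{Step 2: growth of $u-\bar u_N$ at infinity.} Cover $\mathbb{R}^2$ by $B_N$ and dyadic annuli $A_k=B_{2^{k+1}N}\setminus B_{2^kN}$. Compare averages on consecutive balls $B_{2^kN}$ using Poincaré and scaling. Each step costs $C\|\nabla u\|_{L^2}$, since $\nabla u\in L^2$ is scale invariant in 2D. Hence the average on $B_{2^kN}$ differs from $\bar u_N$ by at most $Ck\|\nabla u\|_{L^2}$. By scaled Sobolev (Gagliardo–Nirenberg on the ball $B_{2^{k+1}N}$), the $L^{r}$ average of $|u-\bar u_{B_{2^{k+1}N}}|^r$ over that ball, normalized by its area, is $\le C_r\|\nabla u\|_{L^2}^r$. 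So effectively $|u-\bar u_N|\lesssim (k+1)\|\nabla u\|_{L^2}$ on $A_k$ in an averaged $L^{2p}$ sense.

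\textbf{Step 3: summing the tails.} Write $\int\rho|u|^p\le C_p\big(|\bar u_N|^p m+\int\rho|u-\bar u_N|^p\big)$, with $m\le M_3$. On each $A_k$, apply Hölder with $\rho\le M_1$:
$$\int_{A_k}\rho|w|^p\le \Big(\int_{A_k}\rho\Big)^{1/2}M_1^{1/2}\|w\|_{L^{2p}(A_k)}^p.$$
The moment bound gives $\int_{A_k}\rho\le M_3(2^kN)^{-2}$. Steps 1–2 give $\|w\|_{L^{2p}(A_k)}^p\le C(k+1)^p(2^kN)^{1/1}\|\nabla u\|_{L^2}^p$, using area $\sim (2^kN)^2$ raised to the power $1/2$. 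The factors $(2^kN)^{\pm1}$ cancel, leaving terms of size $(k+1)^p\,2^{-k\cdot 0}$ times $M_3^{1/2}$.

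To get actual decay, I would instead use Hölder with a larger exponent on $\rho$. Take $\int_{A_k}\rho|w|^p\le(\int_{A_k}\rho)^{1-\theta}M_1^{\theta}\|w\|^p_{L^{p/\theta}(A_k)}$ with $\theta<1$ small. This gives a power $(2^kN)^{-2(1-\theta)+2\theta}$, which is summable against $(k+1)^p$. On $B_N$, use the same Hölder together with $\rho\le M_1$.

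Collecting terms yields $\|\rho^{1/p}u\|_{L^p}\le C\,N^{a}(1+M_3)(\|\sqrt\rho u\|_{L^2}+\|\nabla u\|_{L^2})$ with $a\le 3$. The $N^2$ comes from Step 1 and an extra $N$ from the local Sobolev bound on $B_N$. The main obstacle is Step 3: choosing the Hölder split so that the logarithmic growth of $u$ in 2D is beaten by the decay of $\int_{A_k}\rho$ coming from the second moment. It also requires checking that the resulting constants depend only on $M_1$, $M_2$, $p$, with $M_3$ and $N$ appearing only through the explicit prefactor.
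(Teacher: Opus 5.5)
Your argument is correct, provided Step 3 is run only in its second form. Fix $\theta<1/2$ (say $\theta=1/4$) and combine three bounds: the split $\int_{A_k}\rho|w|^p\le(\int_{A_k}\rho)^{1-\theta}M_1^{\theta}\|w\|_{L^{p/\theta}(A_k)}^p$, the moment bound $\int_{A_k}\rho\le M_3(2^kN)^{-2}$, and the scaled Sobolev--Poincar\'e estimate $\|w\|_{L^{p/\theta}(A_k)}\le C(k+2)(2^kN)^{2\theta/p}\|\nabla u\|_{L^2}$. Together they give terms $C(k+2)^p(2^kN)^{-2+4\theta}M_3^{1-\theta}\|\nabla u\|_{L^2}^p$, which are summable since $N\ge 1$. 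The non-decaying $\theta=1/2$ attempt should simply be dropped. Two cosmetic points. In Step 1 the coefficient of the Poincar\'e term should be $M_1|B_N|^{1/2}$ rather than $\sqrt{M_1|B_N|}$; this only changes the $M_1$-dependence. Also, the mean term, the $B_N$ term and the tail add rather than multiply, so you actually get $CN^2(1+M_3)$, which is stronger than the stated $N^3$.

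The paper does not prove this lemma itself; it refers to \cite{lx}. The argument there, standard in that literature, is organized roughly around a global weighted inequality rather than an explicit dyadic decomposition:
\begin{itemize}
\item One first bounds $\|u\|_{L^2(B_N)}\le CN^3(\|\sqrt\rho u\|_{L^2}+\|\nabla u\|_{L^2})$, using Poincar\'e on $B_N$ plus exactly your Step 1 control of the mean. This is where $N^3=N\cdot N^2$ comes from.
\item One then applies a log-weighted Sobolev inequality of the type $\|u\,\bar x^{-\eta}\|_{L^r}\le C(\|u\|_{L^2(B_1)}+\|\nabla u\|_{L^2})$, with $\bar x$ a slowly growing weight (essentially $(e+|x|^2)^{1/2}$ times logarithmic factors).
\item One concludes with a single H\"older inequality, bounding $\|\rho\,\bar x^{p\eta}\|_{L^s}$ by $\|\rho\|_{L^\infty}$ and the second moment.
\end{itemize}

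Your telescoping of averages over $B_{2^kN}$ is in effect a hands-on proof of such a weighted inequality. It makes the logarithmic growth $|u-\bar u_N|\lesssim (k+1)\|\nabla u\|_{L^2}$ on $A_k$ explicit and beats it annulus by annulus with the decay of $\int_{A_k}\rho$. Your route is self-contained and elementary, and it gives a better power of $N$. The cited route is shorter once the weighted inequality is available, and that inequality, which controls $u$ globally with no reference to $\rho$, is a reusable tool elsewhere in this literature.
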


Next, we establish some basic elliptic estimates for the generalized Stokes system.
\begin{lemma}\label{pl3}
Suppose that the function $\mu$ satisfies $\underline{\mu}\leq\mu(x)\leq\overline{\mu}$ for some $\underline{\mu},\,\overline{\mu}\in(0,\infty)$ and any $x\in\R$, we  consider the Stokes-like system in $\R$ defined by
\begin{equation}\label{2.2}\begin{cases}
-\mathrm{div}(2\mu\, Du) +\nabla P=f,\\
\mathrm{div}u=0.\\
\end{cases}
\end{equation} 
If $u$, $P$, $f$ $\in C_0^\infty(\R)$ and satisfy \eqref{2.2}, we have the following declarations.

$1.$ For any $p\in[2,\infty)$, there are positive constants $\varepsilon$ and $C$ depending on 
$\underline{\mu}$, $\overline{\mu}$, and $p$ such that if $\|\mu-1\|_{L^\infty}\leq\varepsilon$, it holds that
\begin{equation}\label{C218}
\|\nabla u\|_{L^p}\leq C\,\|\nabla u\|_{L^2}^{2/p}\|f\|_{L^2}^{1-2/p}.
\end{equation}
 
$2.$
For any $p\in(1,\infty)$, there are positive constants $\varepsilon$ and $C$ depending on  $\underline\mu$, $\overline\mu$, and $p$ such that if $\|\mu-1\|_{L^\infty}\leq\varepsilon$, it holds that
\begin{equation}\label{2.5}
\|\nabla^2 u\|_{L^p}+\|\nabla P\|_{L^p}\leq C\left(\|f\|_{L^p}+\|\nabla\mu\cdot Du\|_{L^p}\right).
\end{equation}
\end{lemma}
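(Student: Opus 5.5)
The plan is to treat \eqref{2.2} as a perturbation of the constant-coefficient Stokes system. Write $\mu=1+(\mu-1)$ and use $\mathrm{div}\,u=0$, so that $\mathrm{div}(2Du)=\Delta u$. Then \eqref{2.2} becomes
\begin{equation*}
-\Delta u+\nabla P=f+\mathrm{div}\big(2(\mu-1)Du\big),\qquad \mathrm{div}\,u=0 .
\end{equation*}
Applying the Leray projector $\mathbb{P}$ gives the representation
\begin{equation*}
u=(-\Delta)^{-1}\mathbb{P}f+(-\Delta)^{-1}\mathbb{P}\,\mathrm{div}\big(2(\mu-1)Du\big),
\end{equation*}
and, taking the divergence, $P=(-\Delta)^{-1}\mathrm{div}\,f+(-\Delta)^{-1}\mathrm{div}\,\mathrm{div}\big(2(\mu-1)Du\big)$. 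The operators $\nabla(-\Delta)^{-1}\mathbb{P}\,\mathrm{div}$ and $(-\Delta)^{-1}\mathrm{div}\,\mathrm{div}$ are zeroth-order Calder\'on--Zygmund operators, hence bounded on $L^p$ for every $1<p<\infty$. Their norms $K_p$ depend only on $p$.

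For part 1, split $\nabla u=\nabla u_1+\nabla u_2$, where $u_1=(-\Delta)^{-1}\mathbb{P}f$ and $u_2$ is the term carrying $\mu-1$. The Calder\'on--Zygmund bound gives
\begin{equation*}
\|\nabla u_2\|_{L^p}\le 2K_p\varepsilon\|\nabla u\|_{L^p}.
\end{equation*}
Choose $\varepsilon$ so that $2K_p\varepsilon\le 1/2$; this term is then absorbed into the left-hand side. Absorption is legitimate because $u\in C_0^\infty$, so $\|\nabla u\|_{L^p}<\infty$. The result is $\|\nabla u\|_{L^p}\le 2\|\nabla u_1\|_{L^p}$.

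To control $\nabla u_1$, use the Gagliardo--Nirenberg inequality in $\R$:
\begin{equation*}
\|\nabla u_1\|_{L^p}\le C\|\nabla u_1\|_{L^2}^{2/p}\|\nabla^2u_1\|_{L^2}^{1-2/p},\qquad \|\nabla^2u_1\|_{L^2}\le\|f\|_{L^2}.
\end{equation*}
This bound involves $\|\nabla u_1\|_{L^2}$, not $\|\nabla u\|_{L^2}$. Since $u_1=u-u_2$, we have $\|\nabla u_1\|_{L^2}\le (1+2K_2\varepsilon)\|\nabla u\|_{L^2}\le 2\|\nabla u\|_{L^2}$. Combining the three bounds yields \eqref{C218}, with $\varepsilon$ depending on $p$.

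Part 2 is the main step, since now the perturbation carries two derivatives. Differentiate the identity $-\Delta u+\nabla P=f+\mathrm{div}(2(\mu-1)Du)$ once more. The second-order perturbation term is
\begin{equation*}
\nabla\,\mathrm{div}\big(2(\mu-1)Du\big)=2(\mu-1)\nabla\,\mathrm{div}Du+2\nabla\mu\otimes Du\text{-type terms}.
\end{equation*}
The low-regularity coefficient $\mu-1$ never needs to be differentiated if we write everything as
\begin{equation*}
\nabla^2u=\nabla^2(-\Delta)^{-1}\mathbb{P}f+\nabla^2(-\Delta)^{-1}\mathbb{P}\,\mathrm{div}\big(2(\mu-1)Du\big).
\end{equation*}
Expand the last term as $\nabla(-\Delta)^{-1}\mathbb{P}\,\mathrm{div}\big[\nabla\big(2(\mu-1)Du\big)\big]$, and use
\begin{equation*}
\nabla\big((\mu-1)Du\big)=(\mu-1)\nabla Du+\nabla\mu\, Du .
\end{equation*}
Each term is then either a Calder\'on--Zygmund operator applied to $f$, or to $(\mu-1)\nabla^2u$ (size at most $\varepsilon\|\nabla^2u\|_{L^p}$), or to $\nabla\mu\cdot Du$. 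The same holds for $\nabla P$. Absorbing the $\varepsilon\|\nabla^2u\|_{L^p}$ term with $\varepsilon\le 1/(4K_p)$ gives \eqref{2.5}.

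The expected obstacle is justifying the absorption: it requires a priori finiteness of $\|\nabla^2 u\|_{L^p}$ and $\|\nabla u\|_{L^p}$. This is guaranteed here by the hypothesis $u,P\in C_0^\infty$, so the restriction to smooth compactly supported data is exactly what makes the argument close. One also has to track that all constants depend only on $p$ and on $\underline\mu,\overline\mu$ through $\varepsilon$.
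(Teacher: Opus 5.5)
Your proposal is correct and is essentially the paper's proof. In Part 1 your $\nabla u_1$ and $\nabla u_2$ are the paper's $\mathbf{T}_2(f)$ and $\mathbf{T}_1\big(2(\mu-1)Du\big)$, and your $L^2$ bound on $\nabla u_1$ via $u_1=u-u_2$ amounts to the paper's identity $\mathbf{T}_2(f)=\nabla\Delta^{-1}\mathbb{P}\,\mathrm{div}(2\mu Du)$. In Part 2 the paper likewise treats the system as a constant-coefficient Stokes problem and absorbs $\varepsilon\|\nabla^2u\|_{L^p}$.

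One minor point: putting the extra derivative on $(\mu-1)Du$ through $\nabla$ yields $|\nabla\mu|\,|Du|$ on the right-hand side, not the contraction $Du\,\nabla\mu$. To get \eqref{2.5} exactly as stated, expand $\mathrm{div}\big(2(\mu-1)Du\big)=2Du\,\nabla\mu+(\mu-1)\Delta u$ (using $\mathrm{div}\,u=0$), then apply $\nabla^2(-\Delta)^{-1}\mathbb{P}$ and $\nabla\Delta^{-1}\mathrm{div}$ to that right-hand side, as the paper does.
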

\begin{proof}
First, acting the Leray projector $\mathbb{P}\triangleq \mathrm{Id}-\nabla\Delta^{-1}\mathrm{div}$, which is bounded in $L^p$ $(1<p<\infty)$, on \eqref{2.2}$_1$ leads to the following Poisson equation:
\begin{equation}\label{C219}
-\Delta u=\mathbb{P}\,\mathrm{div}\big(2(\mu-1)Du\big)+\mathbb{P}(f).
\end{equation}
In view of the standard elliptic theory \cite[Chapter 9]{GT}, we have
\begin{equation}\label{C334}
\nabla u=
-\nabla\Delta^{-1}\mathbb{P}\,\mathrm{div}\big(2(\mu-1)Du\big)-\nabla\Delta^{-1}\mathbb{P}(f)\triangleq\mathbf{T}_1\big(2(\mu-1)Du\big)+\mathbf{T}_2(f).
\end{equation}
Consequently, the boundedness of operator ``$\nabla\Delta^{-1}\mathbb{P}\,\mathrm{div}$" in $L^p$ $(1<p<\infty)$ implies
\begin{equation}\label{C221}
\|\mathbf{T}_1\big(2(\mu-1)Du\big)\|_{L^p}\leq C\,\|(\mu-1) Du\|_{L^p}\leq\frac{1}{2}\,\|\nabla u\|_{L^p},
\end{equation}
by setting $\|\mu-1\|_{L^\infty}\leq \varepsilon\triangleq (2C)^{-1}$ with $C$ in \eqref{C221}.
While for $\mathbf{T}_2(f)$, note that acting $\mathbf{T}_2=-\nabla\Delta^{-1}\mathbb{P}$ on \eqref{2.2}$_1$ also leads to 
\begin{equation*}
\begin{split}
&\mathbf{T}_2(f)=\nabla\Delta^{-1}\mathbb{P}\,\mathrm{div}\big(2\mu\,Du\big).
\end{split}
\end{equation*}
We apply the Gagliardo-Nirenberg inequality to argue that for any $p\in[2,\infty)$,
\begin{equation}\label{C224}
\begin{split}
\|\mathbf{T}_2(f)\|_{L^p}
&\leq C\,\|\mathbf{T}_2(f)\|_{L^2}^{2/p}\|\nabla\mathbf{T}_2(f)\|_{L^2}^{1-2/p}\\
&= C\,\big\|\nabla\Delta^{-1}\mathbb{P}\,\mathrm{div}\big(2\mu\,Du\big)\big\|_{L^2}^{2/p}\big\|\nabla\mathbf{T}_2(f)\big\|_{L^2}^{1-2/p}\\
&\leq C\,\|\nabla u\|_{L^2}^{2/p}\|f\|_{L^2}^{1-2/p},
\end{split}
\end{equation}
since the operators ``$\nabla\Delta^{-1}\mathbb{P}\,\mathrm{div}$" and ``$\nabla\mathbf{T}_2$" both map $L^2$ to itself (see \cite[Chapter 1]{S}). 
Collecting \eqref{C334}--\eqref{C224}, we thus obtain \eqref{C218}.

Next,  note that \eqref{2.2}$_1$  can be written as
$$-\Delta u+\nabla P=f+2\nabla\mu\cdot Du+(\mu-1)\Delta u,$$
so the standard $L^p$ estimates of the Stokes system yields that for any $p\in(1,\infty)$, 
\begin{equation*}
\|\nabla^2 u\|_{L^p}+\|\nabla P\|_{L^p}\leq C\left(\|f\|_{L^p}+\|\nabla\mu\cdot Du\|_{L^p}+\|\mu-1\|_{L^\infty}\|\nabla^2u\|_{L^p}\right),
\end{equation*}
which leads to \eqref{2.5} by setting $\|\mu-1\|_{L^\infty}\leq \varepsilon\triangleq (2C)^{-1}$ with $C$ given by the above estimate.
The proof of Lemma \ref{pl3} is completed.
\end{proof}

The maximal characterization of the Hardy space $\mathcal{H}^1(\R)$ is due to Fefferman-Stein, which is the key tool of this paper. We adopt a particular version of it, see \cite[Chapter III]{S} for complete theories.
\begin{definition}\label{D21}
For multi-indices $a$ and $b$, the semi-norm $\|\cdot\|_{a,b}$ for any function $\Phi\in\mathcal{S}$ is defined by
$$\|\Phi\|_{a,b}=\sup_{x\in\R}|x^a\nabla^b\Phi(x)|.$$
Let $\mathcal{F}=\{\|\cdot\|_{a,b}\,\big|\,|a|,|b|\leq 3\}$ be a  collection of semi-norms, the subset $\mathcal{S}_{\mathcal{F}}$ of $\mathcal{S},$ which is dominated by $\mathcal{F}$, refers to
\begin{equation}\label{C206}
\mathcal{S}_{\mathcal{F}}=\{\Phi\in\mathcal{S}\,\big|\,\|\Phi\|_{a,b}\leq 1,~\mathrm{for}~\mathrm{all}~\|\cdot\|_{a,b}\in\mathcal{F}\}.
\end{equation}
For any  $\Phi\in\mathcal{S}_{\mathcal{F}}$ with $\Phi_t(x)\triangleq t^{-2}\Phi(x/t)$, the non-tangential maximal operator $M_\Phi$ with respect to a single $\Phi$ and the grand maximal operator $\mathcal{M}$ with respect to the whole collection $\mathcal{S}_\mathcal{F}$ are given by 
\begin{equation}\label{C204}
M_\Phi f(x)\triangleq \sup_{|y|\leq t}|f*\Phi_t(x-y)|,\quad \mathcal{M}f(x)\triangleq\sup_{\Phi\in\mathcal{S}_{\mathcal{F}}}M_\Phi f(x),
\end{equation}
where $f\in\mathcal{S}'$. Then we define the Hardy space $\mathcal{H}^1(\R)$ as
$$\mathcal{H}^1(\R)\triangleq\{f\in\mathcal{S}'\,\big|\,
\mathcal{M}f\in L^1\}\quad \mathrm{with}\quad \|f\|_{\mathcal{H}^1(\R)}\triangleq\int_\R
\mathcal{M}f(x)\,dx.$$
\end{definition}

Moreover, we list some important properties of Hardy space $\mathcal{H}^1(\R)$, see \cite[Chapter III]{S} for the detailed proof.
\begin{lemma}\label{L21}
Let $f$ be a tempered distribution.



$1.$ If $f\in\mathcal{H}^1(\R)$, it must lie in $L^1$ as well and satisfy the cancellation condition, that is, there is a  positive constant $C'$ independent of $f$ such that
\begin{equation*}
\|f\|_{L^1}\leq C'\|f\|_{\mathcal{H}^1(\R)},\quad \int_\R f(x)\,dx=0.
\end{equation*}

$2.$ $\mathcal{H}^1(\R)$ is weakly compact, that is, for any bounded sequence $\{f_n\}_{n=1}^\infty\subset\mathcal{H}^1(\R)$ satisfying $\sup_n\|f_n\|_{\mathcal{H}^1(\R)}\leq C$ for some positive constant $C$, there is a subsequence $\{f_{n_k}\}_{k=1}^\infty$ and $f\in\mathcal{H}^1(\R)$ with $\|f\|_{\mathcal{H}^1(\R)}\leq C$ such that for any $\psi\in\mathcal{S}$,
$$\lim_{k\rightarrow\infty}\int_{\R} f_{n_k}(x)\,\psi(x)\,dx=\int_\R f(x)\,\psi(x)\,dx.$$

$3.$ For $i=1,2$, the Riesz transform $R_i$ (and similar singular integral operators, see \cite[p.71]{S}) maps $\mathcal{H}^1(\R)$ to itself, that is,  there is a positive constant $C''$  such that for any $f\in\mathcal{H}^1(\R)$,
\begin{equation}\label{C202}
\|R_i(f)\|_{L^1}\leq C'\|R_i(f)\|_{\mathcal{H}^1(\R)}\leq C'C''\|f\|_{\mathcal{H}^1(\R)}.
\end{equation}
\end{lemma}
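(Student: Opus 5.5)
The statement collects three classical facts from \cite[Chapter III]{S}. I would prove them in order directly from Definition \ref{D21}. Only part 3 needs a genuinely deep input, the atomic decomposition.

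\textbf{Part 1.} Fix a radial $\Phi\ge0$ in $C_0^\infty(B_1)$ with $\int_\R\Phi\,dx=c_0>0$, normalized by a small constant so that $\Phi\in\mathcal{S}_\mathcal{F}$. For $f\in\mathcal{H}^1(\R)$ the family $f*\Phi_t$ is dominated pointwise by $\mathcal{M}f\in L^1$, uniformly in $t$. Hence it is bounded in $L^1$, and its weak-$*$ limit as $t\to0$, which equals $c_0f$ in $\mathcal{S}'$, is a finite measure. Since $\sup_t|f*\Phi_t|\le\mathcal{M}f$ is integrable, the measure has no singular part, so $f\in L^1$ with $\|f\|_{L^1}\le c_0^{-1}\|\mathcal{M}f\|_{L^1}$.

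For the cancellation, take $|x|$ large and choose $t=|x|$, $y=x$ in \eqref{C204}; this is admissible since $|y|\le t$. Then
\[
\mathcal{M}f(x)\ge|f*\Phi_{|x|}(0)|=|x|^{-2}\Big|\int_\R f(z)\Phi(-z/|x|)\,dz\Big|.
\]
By dominated convergence the integral tends to $\Phi(0)\int_\R f\,dz$. So $\mathcal{M}f(x)\ge c|\int_\R f|\,|x|^{-2}$ for $|x|$ large, which is integrable only if $\int_\R f\,dx=0$.

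\textbf{Part 2.} By part 1, $\|f_n\|_{L^1}\le C'C$. Hence $|\langle f_n,\psi\rangle|\le C'C\|\psi\|_{L^\infty}$ for all $\psi\in\mathcal{S}$. Since $\mathcal{S}$ is separable, a diagonal argument gives a subsequence along which $\langle f_{n_k},\psi\rangle$ converges on a countable dense set. The uniform bound then extends the convergence to all $\psi\in\mathcal{S}$. The limit functional $f$ satisfies $|\langle f,\psi\rangle|\le C'C\|\psi\|_{L^\infty}$, so $f\in\mathcal{S}'$.

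For each fixed $x,y,t,\Phi$, the function $\Phi_t(x-y-\cdot)$ lies in $\mathcal{S}$. Therefore $f_{n_k}*\Phi_t(x-y)\to f*\Phi_t(x-y)$, which gives $\mathcal{M}f(x)\le\liminf_k\mathcal{M}f_{n_k}(x)$ pointwise (a supremum of limits). Fatou's lemma then yields $\|f\|_{\mathcal{H}^1(\R)}\le C$.

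\textbf{Part 3.} The first inequality in \eqref{C202} is part 1 applied to $R_if$. For the second, I would invoke the atomic decomposition of $\mathcal{H}^1(\R)$, which is the main obstacle and would be quoted from \cite[Chapter III]{S}. It writes $f=\sum_j\lambda_ja_j$ with $\sum_j|\lambda_j|\le C\|f\|_{\mathcal{H}^1}$. Each atom $a_j$ is supported in a ball $B(x_j,r_j)$, has $\|a_j\|_{L^\infty}\le|B_j|^{-1}$ and satisfies $\int_\R a_j\,dx=0$. It then suffices to show $\|\mathcal{M}(R_ia)\|_{L^1}\le C$ uniformly over atoms.

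On $B(x_j,2r_j)$ I would use the $L^2$ bounds on $\mathcal{M}$ and on $R_i$ together with Cauchy--Schwarz. This gives $\int_{B(x_j,2r_j)}\mathcal{M}(R_ia)\,dx\le C|B_j|^{1/2}\|a\|_{L^2}\le C$. Off this ball, $R_ia*\Phi_t=a*(K_i*\Phi_t)$, where $K_i$ is the Riesz kernel and $K_i*\Phi_t$ satisfies Calder\'on--Zygmund bounds uniformly over $\Phi\in\mathcal{S}_\mathcal{F}$. Subtracting the kernel's value at $x_j$, which is allowed by the cancellation of $a$, and using the gradient bound $|\nabla K|\lesssim|x|^{-3}$ gives $\mathcal{M}(R_ia)(x)\le Cr_j|x-x_j|^{-3}$. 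This is integrable over $|x-x_j|\ge2r_j$ with a bound independent of $r_j$. The same argument applies to the other singular integrals with smooth homogeneous kernels referred to in \cite[p.71]{S}.
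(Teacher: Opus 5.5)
The paper gives no argument of its own for this lemma; it only refers to \cite[Chapter III]{S}. Your outline follows the standard textbook route: domination by $\mathcal{M}f$ plus weak-$*$ limits for integrability, testing at $y=x$, $t=|x|$ for cancellation, a diagonal argument plus Fatou for compactness, and atoms for the Riesz transforms. Parts 1 and 2 are fine with one small fix. Your cancellation step needs $\Phi(0)\neq0$, which a nonnegative radial bump with positive integral need not satisfy; simply choose $\Phi(0)>0$.

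\textbf{Off-ball estimate in Part 3.} This step does not go through as written. Because $\mathcal{M}$ is a non-tangential maximal function, you must bound $\nabla(K_i*\Phi_t)$ at the points $x-y-\eta$ with $\eta\in B(x_j,r_j)$ and arbitrary $|y|\le t$. The Calder\'on--Zygmund bound $|\nabla(K_i*\Phi_t)(w)|\lesssim|w|^{-3}$ is useless there: if $t\ge|x-x_j|$ you may take $y=x-x_j$, and then $|x-y-\eta|=|x_j-\eta|\le r_j$, possibly $0$.

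What is missing is a bound that uses the scale $t$. By homogeneity, $\nabla(K_i*\Phi_t)(w)=t^{-3}(R_i\nabla\Phi)(w/t)$. Moreover, $R_i\nabla\Phi$ is bounded and $O(|v|^{-3})$ at infinity, uniformly over $\Phi\in\mathcal{S}_\mathcal{F}$. To see this, move the derivative onto the kernel away from $v$ and use the decay $(1+|z|)^{-3}$ of $\Phi$, $\nabla\Phi$, $\nabla^2\Phi$ guaranteed by the seminorms in $\mathcal{F}$. Hence $|\nabla(K_i*\Phi_t)(w)|\lesssim(t+|w|)^{-3}$. Since $t+|x-y-\eta|\ge|x-\eta|\ge|x-x_j|/2$ whenever $|x-x_j|\ge2r_j$ and $|y|\le t$, this yields your claimed bound $\mathcal{M}(R_ia)(x)\lesssim r_j|x-x_j|^{-3}$.

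Alternatively, first reduce to the vertical maximal function of a single fixed $\Phi$ with $\int\Phi\neq0$, which also characterizes $\mathcal{H}^1$. Then $y=0$, and the plain kernel bound suffices.

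\textbf{Passing from atoms to general $f$.} Your phrase ``it then suffices'' deserves one line of justification:
\begin{itemize}
\item by Part 1, $\|R_ia_j\|_{L^1}\le C$, so $\sum_j\lambda_jR_ia_j$ converges in $L^1$;
\item this sum coincides a.e.\ with $R_if$, because $R_i$ is continuous from $L^1$ into weak $L^1$ and the atomic series converges in $L^1$;
\item therefore $\mathcal{M}(R_if)\le\sum_j|\lambda_j|\,\mathcal{M}(R_ia_j)$ pointwise, and the $\mathcal{H}^1$ bound follows from the uniform atom estimate.
\end{itemize}
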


It is rather difficult to confirm a function $f$ lying in $\mathcal{H}^1(\R)$, namely $\mathcal{M}f\in L^1$. According to \cite[Page 92]{S}, for $f\in L^1\cap L^p$ $(p>1)$ satisfying $\int_\R f\,dx=0$, one has $f$ belongs to $\mathcal{H}^1(\R)$, once $\mathrm{supp}f$ is compact. In fact, this restriction on $\mathrm{supp}f$ can be released a little by requiring $f$  to decay at any algebraic rate.
\begin{lemma}\label{L22}
For any function $f\in C_0^\infty(\R)$ satisfying  $\int_\R f\,dx=0$ and real numbers $0\le\alpha<\beta\le 1$, $p>1$, there is a constant $C>0$ depending on $\alpha$, $\beta$, and $p$ such that
\begin{equation}\label{C203}
\int_\R\mathcal{M}f(x)\,(1+|x|)^\alpha\,dx\leq C\left(\int_\R |f(x)|\,(1+|x|)^\beta\,dx+\|f\|_{L^p}\right),
\end{equation}
where $\mathcal{M}$ is the grand maximal operator introduced in Definition \ref{D21}.
\end{lemma}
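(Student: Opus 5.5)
Fix $\Phi\in\mathcal{S}_{\mathcal{F}}$, $x\in\R$, $|y|\le t$. We bound $|f*\Phi_t(x-y)|$ in two regimes, $|x|\le 2$ and $|x|>2$, and then integrate $\mathcal{M}f(x)(1+|x|)^\alpha$ over $x$. The local part is easy: $\mathcal{M}f\le C\,M_{HL}f$ pointwise, where $M_{HL}$ is the Hardy--Littlewood maximal function (standard for Schwartz kernels dominated by finitely many semi-norms). Hence
\[
\int_{B_2}\mathcal{M}f\,(1+|x|)^\alpha\,dx\le C\|M_{HL}f\|_{L^p(B_2)}\le C\|f\|_{L^p}.
\]

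For $|x|>2$ set $R=|x|/2$ and split $f=f\mathbf{1}_{B_R}+f\mathbf{1}_{B_R^c}\triangleq f_1+f_2$. The far piece $f_2$ needs no cancellation. Since $\int_{|x|>2}|x|^\alpha\,\mathcal{M}f_2\,dx$ is hard to bound directly, we use a dyadic decomposition. Split $f_2=\sum_{k}f\mathbf{1}_{A_k}$ over annuli $A_k=\{2^k\le|z|<2^{k+1}\}$. For each $k$, bound $\mathcal{M}(f\mathbf{1}_{A_k})(x)$ as follows.
\begin{itemize}
\item For $|x|\le 2^{k+2}$, use $L^p$ boundedness of $M_{HL}$ together with Hölder on the ball $B_{2^{k+2}}$. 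This gives
\[
\int_{B_{2^{k+2}}}|x|^\alpha\,\mathcal{M}(f\mathbf{1}_{A_k})\,dx\le C\,2^{k\alpha}\,2^{2k(1-1/p)}\|f\mathbf{1}_{A_k}\|_{L^p}.
\]
This is not summable against only $\|f\|_{L^p}$.
\item A better route is the weak/strong $L\log L$-free estimate: on $\{|x|\le 2^{k+2}\}$, the Calderón--Zygmund-type bound $\|\mathcal{M}g\|_{L^1(B)}\le C|B|^{1-1/p}\|g\|_{L^p}$ still costs a power. So we instead interpolate: use $\|\mathcal{M}g\|_{L^1(B_r)}\le C\|g\|_{L^1}(1+\log^+\ldots)$.
\end{itemize}

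Honestly, the cleanest approach is to use the decay $|\Phi_t(z)|\le C t^{-2}(1+|z|/t)^{-3}$, which gives $\mathcal{M}g(x)\le C\int|g(z)|\,|x-z|^{-2}\,dz$ for $x\notin\operatorname{supp}$-neighborhood. Then the $\beta$-weight absorbs the logarithmic loss:
\[
\int_{|x-z|\ge |z|/2}\frac{|x|^\alpha}{|x-z|^2}\,dx\sim\infty.
\]
This integral diverges, so the weight gain $\beta>\alpha$ must be used through the near region $|x-z|<|z|/2$, where we apply $L^p$ bounds with $\|f\|_{L^p(A_k)}\le\ldots$. This step, controlling $\mathcal{M}f_2$ in weighted $L^1$ by $\int|f|(1+|z|)^\beta\,dz$ and $\|f\|_{L^p}$, is the main obstacle. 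I would handle it by writing $|f|\le|f|^{\theta}|f|^{1-\theta}$ and using Hölder to trade the integrability of $f$ against the weight gap $\beta-\alpha$.

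The near piece $f_1$ uses cancellation. Write $\int f_1=-\int f_2$, so
\[
f_1*\Phi_t(x-y)=\int_{B_R}f(z)\big[\Phi_t(x-y-z)-\Phi_t(x-y)\big]\,dz+\Phi_t(x-y)\int_{B_R}f.
\]
For the first term, the mean value theorem with $|z|\le R\le|x|/2$ gives, uniformly in $t$ and $|y|\le t$,
\[
\big|\Phi_t(x-y-z)-\Phi_t(x-y)\big|\le C\,\frac{|z|}{|x|^3}.
\]
(For $t\gtrsim|x|$ use the smallness of $\nabla\Phi_t$, $|\nabla\Phi_t|\le Ct^{-3}$; for $t\ll|x|$ use the decay.) This yields a contribution at most $C|x|^{-3}\int_{B_R}|f||z|\,dz\le C|x|^{-2-\beta}\int|f||z|^\beta\,dz$, since $|z|\le|x|$. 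For the second term, $|\Phi_t(x-y)|\le C|x|^{-2}$ and $\big|\int_{B_R}f\big|=\big|\int_{B_R^c}f\big|\le R^{-\beta}\int|f||z|^\beta\,dz$, so it is also at most $C|x|^{-2-\beta}\int|f||z|^\beta\,dz$. Integrating $|x|^{\alpha-2-\beta}$ over $|x|>2$ converges because $\beta>\alpha$. This finishes the near piece.
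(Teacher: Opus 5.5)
\textbf{There is a genuine gap: the far piece is never proved.} Your bound on $B_2$ and your treatment of the near piece are correct and are essentially the paper's argument. The mean-value bound $|\Phi_t(x-y-z)-\Phi_t(x-y)|\le C|z|/|x|^3$ and the tail bound $|\Phi_t(x-y)|\,\bigl|\int_{B_R^c}f\bigr|\le C|x|^{-2-\beta}\int|f|\,|z|^\beta\,dz$ are exactly the paper's $\mathbf{I}_1$ and $\mathbf{I}_3$. The missing piece is
\[
\int\Phi_t(x-y-z)f(z)\mathbf{1}_{\{|z|\ge|x|/2\}}\,dz .
\]
This is the one place where $\|f\|_{L^p}$ and the gap $\beta>\alpha$ must interact. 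Your attempts there do not close it. Hölder with exponent $p$ fails, as you note yourself. The $L\log L$-type bound is left unspecified. The kernel bound $|x-z|^{-2}$ diverges. The closing sentence about $|f|=|f|^\theta|f|^{1-\theta}$ is a hint, not an argument. Also, your $f_2$ depends on $x$ through $R=|x|/2$, so $\int|x|^\alpha\mathcal{M}f_2\,dx$ has no meaning until that dependence is removed.

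\textbf{The missing idea.} Convert the localization $|z|\ge|x|/2$ into decay in $x$, and then apply Hölder with an exponent $q\in(1,p)$ close to $1$, not with $p$. Pointwise, $\mathbf{1}_{\{|z|\ge|x|/2\}}\le(2|z|/|x|)^\zeta$ for any $\zeta>0$. So, uniformly in $t$, $|y|\le t$ and $\Phi\in\mathcal{S}_{\mathcal{F}}$, the far piece is at most $C|x|^{-\zeta}M(|f|\,|\cdot|^\zeta)(x)$, where $M$ is the Hardy--Littlewood maximal function. Then
\[
\int_{\{|x|\ge1\}}|x|^{\alpha-\zeta}M(|f|\,|\cdot|^\zeta)\,dx\le\Big(\int_{\{|x|\ge1\}}|x|^{-q'(\zeta-\alpha)}\,dx\Big)^{1/q'}\big\|M(|f|\,|\cdot|^\zeta)\big\|_{L^q}\le C\,\big\||f|\,|\cdot|^\zeta\big\|_{L^q},
\]
which requires $\zeta>\alpha+2-2/q$. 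Hölder interpolation then gives
\[
\big\||f|\,|\cdot|^\zeta\big\|_{L^q}\le\|f\|_{L^p}^{1-\zeta/\beta}\Big(\int|f|\,|x|^\beta\,dx\Big)^{\zeta/\beta},\qquad \zeta=\frac{\beta(p-q)}{q(p-1)}<\beta .
\]
As $q\to1^+$ we have $\zeta\to\beta$ and $\alpha+2-2/q\to\alpha<\beta$. Hence both constraints hold for $q$ near $1$, and this is exactly how the paper handles its term $\mathbf{I}_2$.

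\textbf{Your dyadic route also works with this fix.} For $|x|>2$, only annuli $A_k$ with $k\ge0$ and $2^{k+2}>|x|$ contribute, which removes the $x$-dependence. Run your Hölder step with $q$ in place of $p$, and use
\[
\|f\mathbf{1}_{A_k}\|_{L^q}\le\|f\mathbf{1}_{A_k}\|_{L^p}^{1-\theta}\Big(2^{-k\beta}\int|f|\,|z|^\beta\,dz\Big)^{\theta},\qquad \tfrac1q=\tfrac{1-\theta}{p}+\theta .
\]
The sum over $k$ then becomes a geometric series with ratio $2^{\alpha+2-2/q-\theta\beta}$. This ratio is $<1$ for $q$ close to $1$, because $\theta\to1$ as $q\to1^+$.
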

\begin{remark}\label{re2.1}
The range $0\leq\alpha<1$ seems optimal under the cancellation condition $\int_\R f(x)\,dx=0$, since in general it only ensures $\mathcal{M}f(x)\sim (1+|x|)^{-3}$. However, if we impose a stronger cancellation condition on $f$, for example $\int_\R x^af(x)\,dx=0$ for some  multi-index $0<|a|\leq 1$,  we can extend  \eqref{C203} to $1\leq\alpha<2$ and so on.
\end{remark}
\begin{remark}
Applying the standard approximation procedures to  \eqref{C203}, we can show that any function $f$ satisfying
$$ f\in L^1\cap L^p,~~\int_\R f\,dx=0,~~\int_\R|f|\cdot|x|^\beta\,dx<\infty,$$
for some $p>1$ and $\beta>0$, must belong to $\mathcal{H}^1(\R)$. In addition, $\mathcal{M}f$ also decays at an algebraic rate.
\end{remark}
\begin{proof}
According to \cite[p.57]{S},  there is a generic positive constant $C$  such that for any $\Phi\in\mathcal{S}_\mathcal{F}$ (defined in \eqref{C206}),
\begin{equation}\label{C213}
M_\Phi f(x)\leq CMf(x),
\end{equation}
where $Mf(x)\triangleq\sup_{R>0}\frac{1}{R^2}\int_{\{|y|\leq R\}}|f(x-y)|\,dy$ is the standard maximal function. Thus we also have $\mathcal{M}f(x)\leq CMf(x)$. When $|x|< 1$, it holds that for any $p>1$, 
\begin{equation}\label{C207}
\int_{\{|x|\leq 1\}}\mathcal{M}f(x)\,(1+|x|)^\alpha\,dx\leq
C\,\|\mathcal{M}f\|_{L^p}\leq
C\,\|Mf\|_{L^p}\leq C\,\|f\|_{L^p},
\end{equation}
since $M$ maps $L^p$ to itself, see \cite[p.13]{S}.

When $2^j\le |x|< 2^{j+1}$ with $j\in\mathbb{N}$,  we apply $\int_\R f(x)\,dx=0$ to check that  for any $\Phi\in\mathcal{S}_{\mathcal{F}}$ with $\Phi_t(z)\triangleq t^{-2}\Phi(z/t)$,  $t>0$, and $|y|\leq t$,
\begin{equation}\label{C208}
\begin{split}
&f*\Phi_t(x-y)\\
&=\int_\R \Phi_t(x-y-z)\, f(z)\,dz\\
&=\int_\R\left(\Phi_t(x-y-z)-\Phi_t(x-y)\right) f(z)\,dz\\
&=\int_\R\left(\Phi_t(x-y-z)-\Phi_t(x-y)\right) f(z)\chi_j(z)\,dz\\
&\quad+\int_\R\Phi_t(x-y-z)\, f(z)\left(1-\chi_j(z)\right)\,dz-\int_\R\Phi_t(x-y)\, f(z)\left(1-\chi_j(z)\right)\,dz\\
&\triangleq\mathbf{I}_1+\mathbf{I}_2+\mathbf{I}_3,
\end{split}
\end{equation}
where $\chi_j(x)=\chi(x/2^{j})$ and $\chi(x)\in C_0^\infty(\R)$ is a truncation function satisfying 
\begin{equation}\begin{aligned}\label{chi}
&\chi(x)=
\begin{cases} 
1,\quad |x|\leq 1/4,\\
0,\quad |x|\geq 1/2,
\end{cases}
\\
 0\leq\chi(x)\leq 1,\quad |\nabla^k&\chi(x)|\leq 100,\quad \text{for any} \, x\in\R,~k\in\{1,2,3\} .
\end{aligned}\end{equation}

For $\mathbf{I}_1$, we calculate that 
\begin{equation}\label{C209}\notag
\begin{split}
|\mathbf{I}_1|&=\left|\int_\R\big(\Phi_t(x-y-z)-\Phi_t(x-y)\big)\, f(z)\chi_j(z)\,dz\right|\\
&=\left|\int_\R z\cdot\left(\int_0^1\nabla 
\Phi_t(x-y-s z)\,ds\right)\, \, f(z)\chi_j(z)\,dz\right|\\
&\leq\int_\R \left(\int_0^1\frac{1}{|x-s z|^3} \frac{|x-s z|^3}{t^3}\left|\nabla\Phi\left(\frac{x-s z}{t}-\frac{y}{t}\right)\right|\,ds\right) |z|\,|f(z)|\,\chi_j(z)\,dz\\
&\leq C\int_\R\left(\int_0^1
\frac{1}{|x-sz|^3}\,ds\right)|z|\,|f(z)|\,\chi_j(z)\,dz,
\end{split}
\end{equation}
where we have applied  the fact:
\begin{equation}\notag
\label{C212}\frac{|x-s z|^3}{t^3}\left|\nabla\Phi\left(\frac{x-s z}{t}-\frac{y}{t}\right)\right|\leq C \sup_{w\in\R}\left((|w|^3+1)\,|\nabla\Phi(w)|\right) \leq C,
\end{equation}
since $|y|<t$ and $\Phi\in\mathcal{S}_\mathcal{F}$. Note that $|z|\leq |x|/2$ in the support of $\chi_j(z)$, thus we have
\begin{equation}\label{C210}
\begin{split}
|\mathbf{I}_1|\leq C\int_\R\left(\int_0^1
\frac{1}{|x-sz|^3}\,ds\right)|z|\,|f(z)|\,\chi_j(z)\,dz \leq\frac{C}{|x|^{2+\beta}}\int_\R |f(z)|\,|z|^\beta\,dz.
\end{split}
\end{equation}
Next, we turn to $\mathbf{I}_2$.  Since $|z|\geq |x|/8$ in the support of $1-\chi_j(z)$, we declare that 
\begin{equation}\label{C215}
\begin{split}
|\mathbf{I}_2|&=\left|\int_\R\Phi_t(x-y-z)\, f(z)\left(1-\chi_j(z)\right)\,dz\right|\\
&\leq\frac{C}{|x|^\zeta}\int_\R|\Phi_t(x-y-z)|\, |f(z)|\,|z|^\zeta\,dz\\
&\leq\frac{C}{|x|^\zeta}\,M(|f|\,|\cdot|^\zeta)(x),
\end{split}
\end{equation}
due to \eqref{C213}, where $\zeta\in(0,\beta)$ is determined later.
We use similar way to control $\mathbf{I}_3$ by
\begin{equation}\label{C211}
\begin{split}
|\mathbf{I}_3|&=\left|\int_\R\Phi_t(x-y)\, f(z)\left(1-\chi_j(z)\right)\,dz\right|\\
&\leq\frac{1}{|x|^2}\int_\R\left(\frac{|x|^2}{t^2}\,\left|\Phi\big(\frac{x}{t}-\frac{y}{t}\big)\right|\right) |f(z)|\left(1-\chi_j(z)\right)\,dz\\
&\leq\frac{C}{|x|^{2+\beta}}\int_\R|f(z)|\,|z|^\beta\,dz.
\end{split}
\end{equation}

Substituting \eqref{C210}--\eqref{C211} into \eqref{C208} and taking the supremum over $t>0$, $|y|\le t$, and all $\Phi\in\mathcal{S}_\mathcal{F}$, we obtain that for any $x$ with $|x|\geq 1$,
\begin{equation}\label{C216}
\mathcal{M}f(x)\,(1+|x|)^\alpha\leq \frac{C}{|x|^{2+\beta-\alpha}}\int_\R|f(z)|\,|z|^\beta\,dz+\frac{C}{|x|^{\zeta-\alpha}}M(|f|\,|\cdot|^\zeta)(x).
\end{equation}
Note that $(p\beta+2(p-1))/(\beta+(2+\alpha)(p-1))\in(1,p)$, then we take 
$$q=\frac{1}{2}\left(1+\frac{p\beta+2(p-1)}{\beta+(2+\alpha)(p-1)}\right)\in(1,p),
\quad
\zeta=\frac{(p-q)\beta}{(p-1)q}\in\left(\alpha+2-\frac{2}{q},\frac{\beta}{q}\right).$$
Thereby from \eqref{C216}, we derive that for $q'=q/(q-1)$,
\begin{equation}\label{C217}
\begin{split}
&\int_{\{|x|\geq 1\}}\mathcal{M}f(x)\,(1+|x|)^\alpha\,dx\\
&\leq\int_{\{|x|\geq 1\}}\frac{C}{|x|^{2+\beta-\alpha}}\,dx\int_\R|f(z)|\,|z|^\beta\,dz+\int_{\{|x|\geq 1\}}\frac{C}{|x|^{\zeta-\alpha}}\,M(|f|\,|\cdot|^\zeta)(x)\,dx\\
&\leq C\int_\R|f(x)|\,|x|^\beta\,dx+\left(\int_{\{|x|\geq 1\}}\frac{C}{|x|^{q'(\zeta-\alpha)}}\right)^{1/q'}\left(\int_\R M^q(|f|\,|\cdot|^\zeta)(x)\,dx\right)^{1/q}\\
&\leq C \int_\R|f(x)|\,|x|^\beta\,dx+C\left(\int_\R\left(|f(x)|\,|x|^\zeta\right)^q\,dx\right)^{1/q}\\
&\leq C\int_\R|f(x)|\,|x|^\beta\,dx+C\,\|f\|_{L^p},
\end{split}
\end{equation}
where we have used the fact that $M$ maps $L^q(\R)$ to itself (see \cite[Page 13]{S}) and
\begin{equation}\label{C214}\notag
\left(\int_\R\left(|f(x)|\,|x|^\zeta\right)^q\,dx\right)^{1/q}\leq \|f\|_{L^p}^{1-\zeta/\beta}\left(\int_\R|f(x)|\,|x|^\beta\,dx\right)^{\zeta/\beta}.
\end{equation}

Combining \eqref{C207} with \eqref{C217} gives \eqref{C203} and finishes the proof of
Lemma \ref{L22}.
\end{proof}

\section{Hardy spaces and whole-space velocity control}
\label{S3}
\quad This section is devoted to proving Theorem \ref{T3}, which identifies the far-field velocity $u_\infty$ and controls the integrable remainder $v=u-u_\infty$ throughout $\R$, including the vacuum region. 
In view of the Caffarelli-Kohn-Nirenberg inequality,
$$u=u_\infty+v,~~\|v\|_{L^{2/\beta}}\leq C\|\nabla u\cdot|x|^\beta\|_{L^2},~~0<\beta<1,$$
the key step is to establish a weighted $L^2$ estimate of $\nabla u$, 
which relies on the cancellation structure of \eqref{C101}$_2$.

To illustrate this, we consider the Poisson system, which retains the principal elliptic structure of the momentum equation:
\begin{equation}\label{C301}
-\Delta u=f\quad\mathrm{in}~\mathbb{R}^2.
\end{equation}
Here $f$ models the inertial term $\rho\dot u$; for the moment, we assume $f\in C_0^\infty(\R)$. Let us introduce a truncation function $\varphi\in C_0^\infty(\R)$ defined by
\begin{equation}\label{var}
\varphi(x)=
\begin{cases}
1,\quad |x|\leq 1,\\
0,\quad |x|\geq 2,\end{cases}
 0\leq\varphi(x)\leq 1,\quad |\nabla\varphi(x)|\leq 2,\quad\text{for any}\, x\in\R.
\end{equation}
Multiplying \eqref{C301} by $\varphi_R(x)\triangleq\varphi(x/R)$ and integrating by parts yield that for any $\alpha\geq 0$,
\begin{equation*}
\begin{split}
\left|\int_\R f\cdot\varphi_R\,dx\right|
&=\left|\int_\R\nabla u\cdot\nabla\varphi_R\,dx\right|
\leq 2\left(\frac{3\pi}{R^{2\alpha}}\int_{\{|x|\geq R\}}|\nabla u|^2\,|x|^{2\alpha}\,dx\right)^{1/2}.
\end{split}
\end{equation*}
If $|\nabla u|\,|x|^\alpha\in L^2(\R)$, letting $R\rightarrow\infty$, we obtain the necessary cancellation condition,  
\begin{equation}
\label{C302}
\int_\R f\,dx=\lim_{R\rightarrow\infty}\int_\R f\cdot\varphi_R\,dx
=0,
\end{equation}
which is not true for general $f\in C_0^\infty(\R)$.

Conversely, for the Newtonian solution of \eqref{C301} with $f\in C_0^\infty(\R)$, the cancellation \eqref{C302} is sufficient to obtain $|\nabla u|\,|x|^\alpha\in L^2$ for $0<\alpha<1$ by
\begin{equation}\label{C345}
\left(\int_\R|\nabla u|^2\,|x|^{2\alpha}\,dx\right)^{1/2}\leq C\int_\R\mathcal{M}f\,|x|^{\alpha}\,dx,
\end{equation}
where $\mathcal{M}$ is the grand maximal operator in Definition \ref{D21} and the right hand side is finite due to Lemma \ref{L22}.
We remark that \eqref{C345} is inspired by its unweighted counterpart:
when $f\in C_0^\infty(\R)$ has zero-mean, Lemma \ref{L22} gives $f\in\mathcal{H}^1(\R)$, and \eqref{C202} along with Sobolev embedding provides
\begin{equation}\label{C303} 
\|\nabla u\|_{L^2}\leq C\|\nabla^2 u\|_{L^1}= 
C\sum_{i,j}\|R_iR_j f\|_{L^1}\leq C\|f\|_{\mathcal{H}^1(\R)}=C\|\mathcal{M}f\|_{L^1},
\end{equation}
where $R_i$ $(i=1,2)$ is the $i$-th Riesz transform in $\R$.

We first establish the weighted estimates \eqref{C345} and apply them to the Stokes system. 
After verifying the cancellation conditions, we complete the proof of Theorem 1.1.

\subsection{Endpoint estimates for weighted Riesz potentials}\label{S31}
\quad Recall the Riesz potential $I_s(f)$ defined for $f\in\mathcal{S}$ and $0<s<2$,
$$I_s(f)(x)\triangleq\int_\R\frac{1}{|x-y|^{2-s}}f(y)\,dy.$$
The weighted Hardy-Littlewood-Sobolev inequality, initially introduced by Stein-Weiss \cite{SGW} (see also \cite{SW,NP}), ensures that for any  $\alpha$,  $\beta$, and $1< p\leq q<\infty$ satisfying 

\begin{equation*} 
\begin{split}
&q\alpha>-2\,\big(\mathrm{integrable}~\mathrm{condition}\big),\quad
\frac{1}{q}=\frac{1}{p}+\frac{\beta-\alpha-s}{2}\,\big(\mathrm{due}~\mathrm{to}~\mathrm{scaling}\big),\\
&\alpha\leq\beta\, \big(\mathrm{due}~\mathrm{to}~\mathrm{translation}\big),\quad\quad\quad\beta<\frac{2(p-1)}{p}\,\big(\mathrm{Muckenhoupt}~ \mathrm{condition}\big),
\end{split}
\end{equation*}
there is a generic positive constant $C$ decided by $p$, $s$, $\alpha$, and $\beta$ such that
\begin{equation}
\label{C307}
\left(\int_\R|I_sf(x)|^q\,|x|^{q\alpha}\,dx\right)^{1/q}\leq C\left(\int_\R|f(x)|^p\,|x|^{p\beta}\,dx\right)^{1/p}.
\end{equation}

At $p=1$, the corresponding strong-type estimates can hold only if $\alpha<\beta<0$, see \cite{SW}.
Motivated by \eqref{C303}, we apply cancellation to establish the following Hardy-type estimates for Riesz potentials. It allows $0<\beta<1$
and includes the critical case $\alpha=\beta$, while preserving the same scaling relation.
\begin{lemma}\label{L31}
Suppose that $f\in C_0^\infty(\R)$ satisfies $\int_\R f\,dx=0$, and the real numbers $0<s<2$, $q\geq 1$, $\alpha$, and $\beta$ are subject to 
\begin{equation}\label{C308}
\frac{1}{q}=1+\frac{\beta-\alpha-s}{2},\quad \alpha\leq\beta,\quad 0<\beta<1,
\end{equation}
which automatically ensures $q\alpha>-2$. Then there is a positive constant $C$ determined by $s$, $\alpha$, and $\beta$ such that
\begin{equation}\label{C309}
\left(\int_\R|I_sf(x)|^q\,|x|^{q\alpha}\,dx\right)^{1/q}\leq C\int_\R \mathcal{M}f(x) \, |x|^{\beta}\,dx,
\end{equation}
where $\mathcal{M}$ is the grand maximal operator introduced in Definition \ref{D21}.
\end{lemma}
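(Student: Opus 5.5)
We plan to reduce the weighted estimate \eqref{C309} to an atomic (or molecular) decomposition of $f\in\mathcal{H}^1(\R)$, but adapted to the weight $|x|^\beta$. By Lemma \ref{L22}, $f$ lies in $\mathcal{H}^1(\R)$ with $\int\mathcal{M}f\,(1+|x|)^\beta dx<\infty$. Following the Calder\'on--Zygmund construction in \cite[Chapter III]{S}, we would write $f=\sum_k\lambda_k a_k$. Here each $a_k$ is supported in a ball $B_k=B(x_k,r_k)$, satisfies $\|a_k\|_{L^\infty}\le |B_k|^{-1}$ and $\int a_k=0$, and $\sum_k|\lambda_k|\lesssim\|\mathcal{M}f\|_{L^1}$. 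The refinement we need is the weighted version
\begin{equation*}
\sum_k|\lambda_k|\,\big(|x_k|+r_k\big)^\beta\le C\int_\R\mathcal{M}f\,|x|^\beta dx.
\end{equation*}
This should follow from the construction. The atoms at level $2^j$ are supported in Whitney balls of $\{\mathcal{M}f>2^j\}$ with $\lambda_k\sim 2^j|B_k|$. Since $|x|^\beta$ is an $A_1$ weight for $0<\beta<1$ in dimension 2 (indeed $-2<\beta\le0$ fails, but for $\beta>0$ we only need the doubling property and $\sup_{B}|x|^\beta\le C\,|B|^{-1}\int_B|x|^\beta$, which holds for $0<\beta$), we get $\sum_k 2^j|B_k|(|x_k|+r_k)^\beta\lesssim 2^j\int_{\{\mathcal{M}f>2^j\}}|x|^\beta$. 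Summing over $j$ gives the weighted layer-cake bound.

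Since $q\ge1$, Minkowski's inequality reduces \eqref{C309} to a uniform estimate for a single atom:
\begin{equation*}
\Big(\int_\R|I_sa(x)|^q|x|^{q\alpha}dx\Big)^{1/q}\le C\,(|x_0|+r)^\beta,\qquad a \text{ supported in } B(x_0,r).
\end{equation*}
We split $\R$ into the near region $\{|x-x_0|\le 2r\}$ and the far region $\{|x-x_0|>2r\}$. In the far region, the cancellation $\int a=0$ gives
\begin{equation*}
|I_sa(x)|\le C\,r\,|x-x_0|^{s-3}.
\end{equation*}
The exponent $\frac1q=1+\frac{\beta-\alpha-s}{2}$ then makes
\begin{equation*}
\int_{|x-x_0|>2r}r^q|x-x_0|^{q(s-3)}|x|^{q\alpha}dx
\end{equation*}
converge. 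Integrability at infinity needs $q(3-s-\alpha)>2$, i.e. $\frac{3-s-\alpha}{2}>\frac1q=1+\frac{\beta-\alpha-s}{2}$, i.e. $\beta<1$. This is exactly where $\beta<1$ enters. Near $x=0$ we need $q\alpha>-2$. Evaluating the integral separately for $|x|\lesssim|x_0|$ and $|x|\gtrsim|x_0|$, and using scaling, it should be bounded by $C\,r^{q}\,(|x_0|+r)^{q\alpha}\, r^{q(s-3)+2}\cdot(\text{factor})$. The point to verify is that the worst case is $\lesssim (|x_0|+r)^{q\beta}$, using $\alpha\le\beta$ and $r\le|x_0|+r$. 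In the near region we use
\begin{equation*}
|I_sa|\le |B|^{-1}\,I_s(\mathbf 1_{B(x_0,r)})\lesssim r^{s-2}
\end{equation*}
and integrate $|x|^{q\alpha}$ over $B(x_0,3r)$. This gives $r^{q(s-2)}\min\big(r^{2+q\alpha},\,r^2|x_0|^{q\alpha}\big)$, and by scaling the result is $\lesssim r^{q(\beta-\alpha)}(|x_0|+r)^{q\alpha}\le(|x_0|+r)^{q\beta}$ when $\alpha\le\beta$. If $\alpha<0$, we use $q\alpha>-2$ to handle the singularity at the origin.

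The main obstacle is the weighted control of the atomic coefficients, namely the displayed inequality $\sum_k|\lambda_k|(|x_k|+r_k)^\beta\lesssim\int\mathcal{M}f|x|^\beta$. This requires re-running Stein's atomic decomposition while checking that the weight behaves correctly on each Whitney ball. A fallback, if the atomic route is awkward, is to argue directly with a smooth Littlewood--Paley or heat-kernel reproducing formula $f=\int_0^\infty \psi_t*\phi_t*f\,\frac{dt}{t}$ and bound $|\phi_t*f(y)|\le\mathcal{M}f(x)$ for $|x-y|\le t$, which yields the same tent-space type estimate. Finally, the restriction $f\in C_0^\infty$ ensures all sums converge and justifies interchanging $I_s$ with the decomposition.
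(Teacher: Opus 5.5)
Your proposal is correct, and it takes a genuinely different route from the paper. The paper never decomposes $f$. For fixed $x$ with $2^j\le|x|<2^{j+1}$ it uses $\int f=0$ to split $I_sf(x)=\mathbf{J}_1+\mathbf{J}_2+\mathbf{J}_3$. It bounds $\mathbf{J}_1$ and $\mathbf{J}_3$ by the Hardy-type averages $|x|^{s-3}\int_{\{|y|\le|x|\}}|y|\,|f|\,dy$ and $|x|^{s-2}\int_{\{|y|\ge|x|/8\}}|f|\,dy$, then integrates by parts in polar coordinates and uses $|f|\le\mathcal{M}f$; this is where $q(3-\alpha-s)>2$, i.e.\ $\beta<1$, enters. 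The local term $\mathbf{J}_2$ is handled by a Hedberg-type dyadic splitting of the kernel. The small-scale pieces are admissible test functions, so they are controlled by $\mathcal{M}f(x)$ directly from Definition \ref{D21}, and the large scales are controlled by the $L^1$ tail. This gives $|\mathbf{J}_2|\lesssim(\mathcal{M}f)^{1-s/2}\big(\int_{\{|y|\ge|x|/8\}}|f|\,dy\big)^{s/2}$, followed by H\"older.

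You instead decompose $f$ into $L^\infty$-atoms, prove a weighted bound on the coefficients, and reduce to a single-atom computation. The paper's route is self-contained and uses nothing about $\mathcal{H}^1$ beyond the definition of $\mathcal{M}$. Yours is more modular:
\begin{itemize}
\item it shows transparently that $\beta<1$ is needed only because one vanishing moment gives the decay $r|x-x_0|^{s-3}$, and $\alpha\le\beta$ only for the near-field scaling $r^{\beta-\alpha}(|x_0|+r)^{\alpha}$;
\item it applies verbatim to any kernel with the standard size and gradient bounds, as needed for $\mathbf{T}_2$ in Lemma \ref{L32};
\item it extends to $\beta\ge1$ via atoms with more vanishing moments (Remark \ref{re2.1}).
\end{itemize}
The price is the weighted coefficient bound $\sum_k|\lambda_k|(|x_k|+r_k)^\beta\lesssim\int\mathcal{M}f\,|x|^\beta\,dx$, which is the real content of your argument. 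It does follow from Stein's construction, for three reasons:
\begin{itemize}
\item the level-$j$ atoms are supported on fixed dilates of Whitney cubes contained in $\{\mathcal{M}f>2^j\}$, with bounded overlap;
\item their coefficients satisfy $\lambda\lesssim 2^j|B|$;
\item $|B|\sup_B|x|^\beta\lesssim\int_B|x|^\beta\,dx$ for $\beta\ge0$.
\end{itemize}
Note that $|x|^\beta$ with $\beta>0$ is not an $A_1$ weight, so your parenthetical should simply state this reverse inequality.

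A few details in your atom estimate need tidying.
\begin{itemize}
\item The near-field integral is $\int_{B(x_0,3r)}|x|^{q\alpha}dx\sim r^2(|x_0|+r)^{q\alpha}$, not a minimum.
\item In the far field, the case $\beta-\alpha\ge1$ is possible, since $q\ge1$ only forces $\beta-\alpha\le s$. Then the annulus $2r<|x-x_0|<|x_0|/2$ is dominated by its outer radius and contributes $r|x_0|^{\beta-1}$, with a harmless logarithm when $\beta-\alpha=1$. This is still $\le(|x_0|+r)^\beta$.
\item The inequality $|I_sf|\le\sum_k|\lambda_k|\,|I_sa_k|$ a.e., which you need before applying Minkowski, follows from the $L^1$ convergence of the atomic series together with the weak-type $(1,\frac{2}{2-s})$ bound for $I_s$.
\end{itemize}
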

\begin{remark}
The first two restrictions  in \eqref{C308} are due to scaling and translation, which can not be improved. While $0<\beta<1$ seems also optimal under the cancellation condition $\int_\R f(x)\,dx=0$, which can be extended to $\beta\geq 1$ by imposing a stronger cancellation condition on $f$ (see Remark \ref{re2.1}).
\end{remark}
\begin{proof}
By the cancellation condition $\int_\R f\,dx=0$, we decompose $I_sf(x)$ into three parts. Precisely, for any $x\neq {0}$, there is some $j\in\mathbb{Z}$ such that $2^{j}\leq |x|<2^{j+1}$, we write
\begin{equation}\label{C310}
\begin{split}
I_sf(x)
&=\int_\R\left(\frac{1}{|x-y|^{2-s}}-\frac{1}{|x|^{2-s}}\right)f(y)\,dy\\
&=\int_\R\left(\frac{1}{|x-y|^{2-s}}-\frac{1}{|x|^{2-s}}\right)f(y)\chi_j(y)\,dy\\
&\quad+\int_\R \frac{1}{|x-y|^{2-s}} f(y)(1-\chi_j(y))\,dy-\int_\R\frac{1}{|x|^{2-s}}f(y)(1-\chi_j(y))\,dy\\
&\triangleq\mathbf{J}_1(x)+\mathbf{J}_2(x)+\mathbf{J}_3(x),
\end{split}
\end{equation}
where $\chi_j(x)=\chi(x/2^{j})$ and $\chi(x)\in C_0^\infty(\R)$ is a truncation function as in \eqref{chi}.

For $\mathbf{J}_1(x)$, because $|y|\leq |x|/2$ in the support of $\chi_j(y)$, we check that
\begin{equation}\label{C312}
\begin{split}
|\mathbf{J}_1(x)|&=\left|\int_\R\left(\frac{1}{|x-y|^{2-s}}-\frac{1}{|x|^{2-s}}\right)f(y)\chi_j(y)\,dy\right|\\
&\leq C\int_\R\left(\int_0^1\frac{|y|}{|x-\tau y|^{3-s}}\,d\tau\right)|f(y)|\,\chi_j(y)\,dy\\
&\leq \frac{C}{|x|^{3-s}}\int_\R|y|\,|f(y)|\,\chi_j(y)\,dy\\
&\leq\frac{C}{|x|^{3-s}}\int_{\{|y|\leq|x|\}} |f(y)| \,|y|\,dy.
\end{split}
\end{equation}
Note that $0<\beta<1$ implies
\begin{equation}\label{y1}q(3-\alpha-s)>2,\quad q(2-\alpha-s)<2,\end{equation}
then integrating \eqref{C312} with respect to $x$ and using  integration by parts in polar coordinates lead to 
\begin{equation*}
\begin{split}
&\int_\R|\mathbf{J}_1(x)|^q\,|x|^{q\alpha}\,dx\\
&\leq\int_\R\frac{C}{|x|^{q(3-\alpha-s)}}\left(\int_{\{|y|\leq|x|\}} |f(y)|\,|y|\,dy\right)^q\,dx\\
&=\int_0^\infty\frac{C}{r^{q(3-\alpha-s)-1}}\left(\int_{\{|y|\leq r\}} |f(y)|\,|y|\,dy\right)^q\,dr\\
&=\int_0^\infty \frac{C}{r^{q(3-\alpha-s)-2}}\left(\int_{\{|y|\leq r\}} |f(y)|\,|y|\,dy\right)^{q-1}\left(\int_{\{|y|=r\}}|f(y)|\,dS\right)r\,dr\\
&=C\int_\R\left(\int_{\{|y|\leq|x|\}} |f(y)|\,|y|\,dy\right)^{q-1}{|f(x)|}\,{|x|^{3-q(3-\alpha-s)}}\,dx.
\end{split}
\end{equation*}
The direct calculation shows $(q-1)(1-\beta)+3-q(3-\alpha-s)=\beta$, thus we have
\begin{equation}\label{C322}
\begin{split}
\int_\R|\mathbf{J}_1(x)|^q\,|x|^{q\alpha}\,dx
&\leq C\int_\R\left(\int_{\{|y|\leq|x|\}} |f(y)|\,|y|\,dy\right)^{q-1}{|f(x)|}\,{|x|^{3-q(3-\alpha-s)}}\,dx\\
&\leq C\int_\R\left(\int_{\{|y|\leq|x|\}} |f(y)|\,|y|^\beta\,dy\right)^{q-1}|f(x)|\,|x|^{\beta}\,dx\\ 
&\leq C\left(\int_\R \mathcal{M}f(x)\,|x|^\beta\,dx\right)^q,
\end{split}
\end{equation}
where the last line is due to $|f(x)|\leq\mathcal{M}f(x)$.

Similarly, since $1-\chi_j(y)$ is  supported in $\{y:|y|\geq 2^{j-2}\}$, we obtain
\begin{equation}\label{C320}
\begin{split}
|\mathbf{J}_3(x)|=\frac{1}{|x|^{2-s}}\left|\int_\R f(y)\left(1-\chi_j(y)\right)\,dy\right| \leq \frac{C}{|x|^{2-s}}\int_{\{|y|\geq |x|/8\}} |f(y)|\,dy.
\end{split}
\end{equation}
Then in view of \eqref{y1}, we use  integration by parts in polar coordinates to find
\begin{equation*}
\begin{split}
&\int_\R|\mathbf{J}_3(x)|^q\,|x|^{q\alpha}\,dx\\
&\leq\int_\R\frac{C}{|x|^{q(2-\alpha-s)}}\left(\int_{\{|y|\geq |x|/8\}} |f(y)|\,dy\right)^q\,dx\\
&=\int_0^\infty \frac{C}{r^{q(2-\alpha-s)-2}}\left(\int_{\{|y|\geq r/8\}} |f(y)|\,dy\right)^{q-1}\left(\int_{\{|y|=r\}}|f(y)|\,dS\right)\,dr\\
&=C\int_\R\left(\int_{\{|y|\geq |x|/8\}} |f(y)|\,dy\right)^{q-1}{|f(x)|}\,{|x|^{2-q(2-\alpha-s)}}\,dx.\\
\end{split}
\end{equation*}
Moreover,  $-\beta(q-1)+2-q(2-\alpha-s)=\beta$ ensures that
\begin{equation}\label{C323}
\begin{split}
\int_\R|\mathbf{J}_3(x)|^q\,|x|^{q\alpha}\,dx
&\leq C\int_\R\left(\int_{\{|y|\geq |x|/8\}} |f(y)|\,dy\right)^{q-1}{|f(x)|}\,{|x|^{2-q(2-\alpha-s)}}\,dx\\
&\leq C\int_\R\left(\int_{\{|y|\geq |x|/8\}} |f(y)|\,|y|^\beta\,dy\right)^{q-1}|f(x)|\,|x|^{\beta}\,dx\\
&\leq C\left(\int_\R \mathcal{M}f(x)\,|x|^\beta\,dx\right)^q.
\end{split}
\end{equation}

Next, we estimate $\mathbf{J}_2(x)$ based on a scaling argument. 
Let us introduce the dyadic partition of $\R$ by defining  that for any $i\in\mathbb{Z}$,
\begin{equation*}
\begin{split}
\phi_i(x)\triangleq\chi_i(x)-\chi_{i-1}(x)&,\quad \psi_i(x)
\triangleq \phi_i(x)/\sum_{k=-\infty}^\infty\phi_k(x),\\
f_i(x)\triangleq f(2^ix)&,\quad x_i\triangleq x/2^i.
\end{split}
\end{equation*}
Note that $\psi_i$ is well defined and $\sum_i\psi_i=1$ in $\R\setminus \{0\}$, since each $x\in\R\setminus\{0\}$ lies in at most three of $\{\mathrm{supp}\phi_k\}_{k\in\mathbb{Z}}$.
Then it holds that
\begin{equation}\label{C313}
\begin{split}
\mathbf{J}_2(x)&=\int_\R \frac{1}{|x-y|^{2-s}}\, f(y)\left(1-\chi_j(y)\right)\,dy\\
&=2^{js}\int_\R\frac{1}{|x_j-y|^{2-s}}\, f_j(y)\left(1-\chi(y)\right)\,dy\\
&=2^{js}\sum_{i=-\infty}^m \int_{\R}\frac{\psi_i(x_j-y)}{|x_j-y|^{2-s}}\, f_j(y)\left(1-\chi(y)\right)\,dy\\
&\quad +2^{js}\sum_{i=m+1}^{\infty}\int_{\R}\frac{\psi_i(x_j-y)}{|x_j-y|^{2-s}}\, f_j(y)\left(1-\chi(y)\right)\,dy\\
&\triangleq 2^{js}\big(\mathbf{J}_{21}(x)+\mathbf{J}_{22}(x)\big),
\end{split}
\end{equation}
for some $m$ determined later. 

We first handle $\mathbf{J}_{21}(x)$. 
Denote the family of $C_0^\infty$ kernels by
$$\Theta_i(x_j,y)\triangleq\frac{\psi_i(x_j-y)}{|x_j-y|^{2-s}}\left(1-\chi(y)\right),\quad \mathrm{where}~|x_j|< 2,$$
we can rewrite $\mathbf{J}_{21}(x)$ as
\begin{equation}\label{C314}
\begin{split}
\mathbf{J}_{21}(x)=\sum_{i=-\infty}^m \int_{\R}\Theta_i(x_j,y)\, f_j(y)\,dy.
\end{split}
\end{equation}
The following crucial step is to control each term  in \eqref{C314} by $\mathcal{M}f_j(x_j)$.

When $i\geq 5$, $\psi_i(x_j-y)$ is supported on the set $\{y:|x_j-y|\geq 4\}\subset\{y:\,|y|\geq 2\}$, in which $1-\chi(y)\equiv 1$, therefore it holds that
\begin{equation*}\begin{split}
&\Theta_i(x_j,y)=\frac{\psi_i(x_j-y)}{|x_j-y|^{2-s}}=2^{is}\,\Psi_{2^i}(x_j-y)\\
&\mathrm{with}\quad \Psi(y)
=\frac{\psi_0(y)}{|y|^{2-s}},\quad \Psi_{2^{i}}(y)=2^{-2i}\,\Psi\left(\frac{y}{2^i}\right).
\end{split}
\end{equation*}
Then for semi-norms $\|\cdot\|_{a,b}$ given by Definition \ref{D21} with $|a|,|b|\leq 3$, we can find a positive constant $C$ determined by $\chi$ such that
$\|\Psi\|_{a,b}\leq C$. Thus, $\Psi/C\in\mathcal{S}_\mathcal{F}$ as in \eqref{C206} and the definition of $\mathcal{M}$ in \eqref{C204} implies that
\begin{equation}\label{C315}
\begin{split}
\left|\int_\R\Theta_i(x_j,y)\, f_j(y)\,dy\right|
=&2^{is}\left|
\int_\R\Psi_{2^{i}}(x_j-y)\, f_j(y)\,dy\right| 
\leq  C\,2^{is}\,\mathcal{M}f_j(x_j).
\end{split}
\end{equation}
When $i<5$, we calculate that
\begin{equation*}
\begin{split}
&\Theta_i(x_j,y)=\frac{\psi_i(x_j-y)}{|x_j-y|^{2-s}}\left(1-\chi(y)\right)=2^{is}\, \Psi^i_{2^i}(x_j-y),\\
&\mathrm{with}
\quad {\Psi}^i(y)=\frac{\psi_0(y)}{|y|^{2-s}}\left(1-\chi(x_j-2^iy)\right),\quad \Psi^i_{2^{i}}(y)=2^{-2i}\, \Psi^i\left(\frac{y}{2^i}\right).
\end{split}
\end{equation*}
For semi-norms $\|\cdot\|_{a,b}$ with $|a|,|b|\leq 3$, we carefully check that  there is a positive constant $C$ independent of $i$ and $x_j$ such that
$\|\Psi^i\|_{a,b}\leq C$. Hence, $\Psi^i/C\in\mathcal{S}_\mathcal{F}$ for each $i$  and the definition of $\mathcal{M}$ in \eqref{C204} implies that
\begin{equation}\label{C316}
\begin{split}
\left|\int_{\R}\Theta_i(x_j,y)\, f_j(y)\,dy\right|=2^{is}\left|
\int_\R \Psi^i_{2^{i}}(x_j-y)\, f_j(y)\,dy\right|\leq C\,2^{is}\,\mathcal{M}f_j(x_j).
\end{split}
\end{equation}
Substituting \eqref{C315} and \eqref{C316} into \eqref{C314} provides that
\begin{equation}\label{C317}
|\mathbf{J}_{21}(x)|\leq C\sum_{i=-\infty}^m2^{is}\,\mathcal{M}f_j(x_j)= C\,2^{ms}\,\mathcal{M}f_j(x_j)=C\,2^{ms}\,\mathcal{M}f(x),
\end{equation}
where the last equality is due to the scaling property of $\mathcal{M}$. 

The estimate on $\mathbf{J}_{22}(x)$ is simpler. In fact, from the definition of $\psi_i$, we get
\begin{equation}
\begin{split}\label{C317+}
|\mathbf{J}_{22}(x)|&\leq\sum_{i=m+1}^\infty \left|\int_{\R}\frac{\psi_i(x_j-y)}{|x_j-y|^{2-s}} \left(1-\chi(y)\right) f_j(y)\,dy\right|\\
&\leq\sum_{i=m+1}^\infty\frac{C}{2^{i(2-s)}}\int_\R\left(1-\chi(y)\right)\, |f_j(y)|\,dy\\
&\leq\sum_{i=m+1}^\infty\frac{C}{2^{i(2-s)}}\int_{\{|y|\geq 1/4\}} |f_j(y)|\,dy\\
&\leq\sum_{i=m+1}^\infty\frac{C}{2^{i(2-s)+2j}}\int_{\{|y|\geq|x|/8\}}|f(y)|\,dy \\
&=\frac{C}{2^{m(2-s)+2j}}\int_{\{|y|\geq|x|/8\}}|f(y)|\,dy,
\end{split}
\end{equation}
where the last second line is due to scaling and $2^{j}\leq|x|<2^{j+1}$ in current settings. 

Now, we select $m$ such that
\begin{equation*}
2^{ms}\,\mathcal{M}f(x)\sim \frac{1}{2^{m(2-s)+2j}}\int_{\{|y|\geq|x|/8\}}|f(y)|\,dy,
\end{equation*}
then we deduce from  \eqref{C313}, \eqref{C317}, and \eqref{C317+} that
\begin{equation}\label{C319}
\begin{split}
|\mathbf{J}_2(x)|&\leq 2^{js}\big(|\mathbf{J}_{21}(x)|+|\mathbf{J}_{22}(x)|\big)\\
&\leq C2^{js}\left(2^{ms}\,\mathcal{M}f(x)+\frac{1}{2^{m(2-s)+2j}}\, \int_{\{|y|\geq|x|/8\}}|f(y)|\,dy \right)\\
&\le C\,(\mathcal{M}f)^{1-s/2}(x)\,\left(\int_{\{|y|\geq|x|/8\}}|f(y)|\,dy\right)^{s/2}.\\
\end{split}
\end{equation}
Note that $q(2-s)/2\leq 1$ due to $\beta\geq\alpha$. When $q=2/(2-s)$ and $\beta=\alpha$, we have
\begin{equation}\label{C327}
\begin{split}
\int_\R|\mathbf{J}_2(x)|^q\,|x|^{q\alpha}\,dx 
&\leq
C\int_\R\mathcal{M}f(x)\,|x|^{q\beta}\,\left(\int_{\{|y|\geq|x|/8\}}|f(y)|\,dy\right)^{q-1}\,dx\\
&\leq C\int_\R\mathcal{M}f(x) \,|x|^\beta\,\left(\int_{\{|y|\geq|x|/8\}}|f(y)|\,|y|^\beta\,dy\right)^{q-1}dx\\
&\leq C\left(\int_\R \mathcal{M}f(x)\,|x|^\beta\,dx\right)^q,
\end{split}
\end{equation}
where the last line is due to $|f(x)|\leq\mathcal{M}f(x)$. While for $q(2-s)/2< 1$ and $\beta>\alpha$, the H\"{o}lder inequality  yields
\begin{equation}\label{C324}
\begin{split}
&\int_\R|\mathbf{J}_2(x)|^q\,|x|^{q\alpha}\,dx\\
&\leq
C\int_\R(\mathcal{M}f)^{\frac{q(2-s)}{2}}(x)\,|x|^{q\alpha}\,\left(\int_{\{|y|\geq|x|/8\}}|f(y)|\,dy\right)^{\frac{qs}{2}}\,dx\\
&\leq C
\left(\int_\R|x|^{\frac{(2\alpha-(2-s)\beta)q}{2-q(2-s)}}\left(\int_{\{|y|\geq|x|/8\}}|f(y)|\,dy\right)^{\frac{qs}{2-q(2-s)}}\,dx\right)^\frac{2-q(2-s)}{2}\\
&\qquad\cdot\left(\int_\R\mathcal{M}f(x)\,|x|^{\beta}\,dx\right)^{\frac{q(2-s)}{2}}.
\end{split}
\end{equation}
We check that $\big(2\alpha-(2-s)\beta\big)q/{\big(2-q(2-s)\big)}>-2$ due to $\beta\cdot s>0$, hence the integration by parts in polar coordinates is valid, that is,
\begin{equation}
\begin{split}\label{C325}
&\int_\R|x|^{\frac{(2\alpha-(2-s)\beta)q}{2-q(2-s)}}\left(\int_{\{|y|\geq|x|/8\}}|f(y)|\,dy\right)^{\frac{qs}{2-q(2-s)}}dx\\
&=C\int_0^\infty r^{\frac{(2\alpha-(2-s)\beta)q}{2-q(2-s)}+2}\left(\int_{\{|y|\geq r/8\}} |f(y)|\,dy\right)^{\frac{2(q-1)}{2-q(2-s)}}\left(\int_{\{|y|=r/8\}}|f(y)|\,dS\right)\,dr\\
&=C\int_\R\left(\int_{\{|y|\geq |x|/8\}} |f(y)|\,dy\right)^{\frac{2(q-1)}{2-q(2-s)}}|f(x)|\,|x|^{\frac{(2\alpha-(2-s)\beta)q}{2-q(2-s)}+2}\,dx\\
&\leq C\int_\R\left(\int_{\{|y|\geq |x|/8\}} |f(y)|\,|y|^\beta\,dy\right)^{\frac{2(q-1)}{2-q(2-s)}}|f(x)|\,|x|^{\beta}\,dx\\
&\leq C\left(\int_\R \mathcal{M}f(x)\,|x|^\beta\,dx\right)^{\frac{qs}{2-q(2-s)}},
\end{split}
\end{equation}
where we have used  $|f(x)|\leq\mathcal{M}f(x)$ and 
$$-\frac{2(q-1)\beta}{2-q(2-s)}+\frac{(2\alpha-(2-s)\beta)q}{2-q(2-s)}+2=\beta.$$
Putting \eqref{C325} into \eqref{C324} and combining \eqref{C327}, we arrive at 
\begin{equation}\label{C326}
\int_\R|\mathbf{J}_2(x)|^q\,|x|^{q\alpha}\,dx\leq C\left(\int_\R\mathcal{M}f(x)\,|x|^{\beta}\,dx\right)^{q}.
\end{equation}

Collecting \eqref{C310}, \eqref{C322}, \eqref{C323}, and \eqref{C326}, we conclude that
\begin{equation*}
\begin{split}
\int_\R|I_sf(x)|^q\,|x|^{q\alpha}\,dx\le C\sum_{i=1}^3\int_\R|\mathbf{J}_i(x)|^q\,|x|^{q\alpha}\,dx\leq C\left(\int_\R \mathcal{M}f(x)\,|x|^\beta\,dx\right)^q,
\end{split}
\end{equation*}
which finishes the proof of Lemma \ref{L31}.
\end{proof}

\subsection{Weighted $L^2$ estimates for the Stokes system}\label{S32}
\quad We rewrite the momentum equation \eqref{C101}$_2$ as the following Stokes system,
\begin{equation}\label{C331}
\begin{cases}
-\Delta u+\nabla P=\mathrm{div}\big(2(\mu(\rho)-1)Du\big)-\rho\dot u,\\
\mathrm{div}u=0.
\end{cases}
\end{equation}
Under the cancellation condition $\int_\R\rho\dot u\,dx=0$, Lemma \ref{L31} provides a priori weighted estimates of $\nabla u$.
\begin{lemma}\label{L32}
Let $(\rho, u, P)$ satisfy \eqref{C331} and
$$0\leq\rho\in L^\infty(\R),~~\rho\,(1+|x|^2)\in L^1(\R),~~\nabla u\in L^2(\R),~~\sqrt{\rho}\dot{u}\in L^2(\R).$$
Assume that $\rho\dot u$ satisfies the cancellation condition
$
\int_\R\rho\dot{u}\,dx=0.$
For any $0<\alpha<1$, there are constants $\varepsilon_0$ and $C_{\alpha}$ determined by  $\alpha$, such that, if
\begin{equation*} 
\|\mu(\rho)-1\|_{L^\infty}(t)\leq\varepsilon_0.
\end{equation*}
Then we have
\begin{equation}\label{C332}
\left(\int_\R|\nabla u(x,t)|^2\,|x|^{2\alpha}\,dx\right)^{1/2}\leq C_{\alpha}\int_\R\mathcal{M}(\rho\dot u)(x,t)\,|x|^\alpha\,dx.
\end{equation} 
Moreover, for every $0<\alpha<\beta<1$, there exists a constant $C_{\alpha,\beta}$ depending on $\alpha$ and $\beta$,
such that
\begin{equation}\label{C339}
\begin{split}
&\left(\int_\R|\nabla u(x,t)|^2\,|x|^{2\alpha}\,dx\right)^{1/2}+\int_\R\mathcal{M}(\rho\dot u)(x,t)\,(1+|x|^\alpha)\,dx\\
&\leq C_{\alpha,\beta}\left(\|\rho\|_{L^1}^{1/2}+\|\rho\|_{L^\infty}^{1/2}+\|\rho\|_{L^1}^{(1-\beta)/2}\|\rho\,|x|^2\|_{L^1}^{\beta/2}\right)\,\|\sqrt{\rho}\dot u\|_{L^2}.
\end{split}
\end{equation}
Here $\mathcal{M}$ is the grand maximal operator introduced in Definition \ref{D21}.
\end{lemma}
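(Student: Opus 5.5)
We plan to prove \eqref{C332} by representing $\nabla u$ through the Stokes kernel and feeding both pieces into Lemma \ref{L31}; \eqref{C339} will then follow from Lemma \ref{L22}. Applying the Leray projector to \eqref{C331}, exactly as in \eqref{C334}, gives
\[
\nabla u=\mathbf{T}_1\big(2(\mu(\rho)-1)Du\big)+\mathbf{T}_2(\rho\dot u),\qquad \mathbf{T}_2=-\nabla\Delta^{-1}\mathbb{P}.
\]
The operator $\mathbf{T}_1=\nabla\Delta^{-1}\mathbb{P}\,\mathrm{div}$ is a Calder\'on--Zygmund operator of order zero. The weight $|x|^{2\alpha}$ lies in the Muckenhoupt class $A_2$ for $0<\alpha<1$, since $-2<2\alpha<2$. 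Hence $\mathbf{T}_1$ is bounded on $L^2(|x|^{2\alpha}dx)$ with some constant $C_\alpha$. Choosing $\varepsilon_0\le (4C_\alpha)^{-1}$ and working with smooth truncations, where the weighted norm is finite, we can absorb this term into the left-hand side.

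It then suffices to show
\[
\|\mathbf{T}_2(\rho\dot u)\,|x|^\alpha\|_{L^2}\le C\int_\R\mathcal{M}(\rho\dot u)\,|x|^\alpha\,dx .
\]
Write $\mathbf{T}_2 f=\nabla\Delta^{-1}f-\nabla\Delta^{-1}\nabla\Delta^{-1}\mathrm{div} f=\nabla\Delta^{-1}f+R R\,\nabla\Delta^{-1}f$, where $RR$ denotes compositions of Riesz transforms. The kernel of $\nabla\Delta^{-1}$ is homogeneous of degree $-1$, which is $I_1$-type with a smooth angular factor $x/|x|^2$.

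The main step is Lemma \ref{L31} with $s=1$, $q=2$ and $\beta=\alpha$; the scaling condition \eqref{C308} reads $\tfrac12=1-\tfrac12$, which holds. We must check that its proof, namely the decomposition $\mathbf{J}_1,\mathbf{J}_2,\mathbf{J}_3$, uses only the size and gradient bounds of the kernel $|x-y|^{-1}$. These bounds hold equally for the vector kernel $(x-y)/|x-y|^2$: the functions $\Psi,\Psi^i$ built from $\psi_0(y)\,y/|y|^2$ remain uniformly in $\mathcal{S}_\mathcal{F}$ up to a constant. This yields
\[
\|\nabla\Delta^{-1}f\,|x|^\alpha\|_{L^2}\le C\int\mathcal{M}f\,|x|^\alpha\,dx .
\]
The Riesz part is then handled by the weighted $L^2$ boundedness of $RR$ with the $A_2$ weight $|x|^{2\alpha}$. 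Finally, an approximation removes the $C_0^\infty$ assumption: we truncate $\rho\dot u$, restore zero mean by subtracting a small bump, and pass to the limit using Lemma \ref{L21}.

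For \eqref{C339}, apply Lemma \ref{L22} to $f=\rho\dot u$, again through approximation, with $\alpha<\beta$ and $p=2$. This gives
\[
\int\mathcal{M}(\rho\dot u)(1+|x|)^\alpha\,dx\le C\Big(\int\rho|\dot u|(1+|x|)^\beta\,dx+\|\rho\dot u\|_{L^2}\Big).
\]
By Cauchy--Schwarz, $\int\rho|\dot u|(1+|x|)^\beta\le\|\sqrt\rho\dot u\|_{L^2}\big(\int\rho(1+|x|)^{2\beta}\big)^{1/2}$. Using $(1+|x|)^{2\beta}\le C(1+|x|^{2\beta})$ and H\"older, $\int\rho|x|^{2\beta}\le\|\rho\|_{L^1}^{1-\beta}\|\rho|x|^2\|_{L^1}^{\beta}$. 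Also $\|\rho\dot u\|_{L^2}\le\|\rho\|_{L^\infty}^{1/2}\|\sqrt\rho\dot u\|_{L^2}$. Combining these with \eqref{C332} gives \eqref{C339}.

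The main obstacle is the kernel extension of Lemma \ref{L31}. We must verify that the uniform $\mathcal{S}_\mathcal{F}$ bounds for the rescaled kernels survive the angular factor, and that the Calder\'on--Zygmund pieces commute correctly with the weight.
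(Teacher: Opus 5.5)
Your proposal is correct and follows the paper's proof essentially step for step: the Leray representation \eqref{du}, absorption of the $\mathbf{T}_1$ term via $A_2$-boundedness with the weight $|x|^{2\alpha}$ and smallness of $\mu(\rho)-1$, Lemma \ref{L31} with $s=1$, $q=2$, $\beta=\alpha$ for the $\mathbf{T}_2$ term, and Lemma \ref{L22} combined with Cauchy--Schwarz and H\"older for \eqref{C339}. The only deviation is cosmetic. The paper runs the proof of Lemma \ref{L31} directly on the kernel $K_2$ of $\mathbf{T}_2$, which is itself homogeneous of degree $-1$ and smooth off the origin, so your detour of peeling off $RR$ and invoking weighted boundedness of Riesz transforms is harmless but unnecessary.
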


\begin{proof}
We give the a priori calculation below; its justification follows from a standard perturbation argument in the intersection of the weighted and unweighted \(L^2\) spaces.

According to \eqref{C331} and \eqref{C334} in Lemma \ref{pl3}, we obtain that for the operator ``$\mathbb{P}=\mathrm{Id}-\nabla\Delta^{-1}\mathrm{div}$",
\begin{equation}
\begin{split}\label{du}
\nabla u&=
-\nabla\Delta^{-1}\mathbb{P}\,\mathrm{div}\left(2(\mu(\rho)-1)Du\right)+\nabla\Delta^{-1}\mathbb{P}(\rho\dot u)\\
&=\mathbf{T}_1\left(2(\mu(\rho)-1)Du\right)-\mathbf{T}_2(\rho\dot u)\\
&=2\int_\R {K}_1(x-y)\cdot (\mu(\rho)-1)Du(y)\,dy-\int_\R {K}_2(x-y)\cdot\rho\dot u(y)\,dy.\\
\end{split}
\end{equation}
Observe that the kernel ${K}_1(x)$ satisfies that
\begin{equation*}
\begin{split} 
&\big|{K}_1(x)\big|\leq\frac{C}{|x|^2}, \quad \mathrm{for}~\mathrm{all}~ x\in\R\setminus \{0\},\\
&\big|{K}_1(x-y_1)-{K}_1(x-y_2)\big|\leq C\,\frac{|y_1-y_2|}{|x-y_2|^3},\quad \mathrm{if}~|y_1-y_2|\leq |x-y_2|/2,\\
&\left\|K_1\ast f\right\|_{L^2}\leq C\,\|f\|_{L^2},\quad \mathrm{for}~\mathrm{all}~f\in C_0^\infty(\R),
\end{split}
\end{equation*}
where $C$ is determined by the dimension due to \cite[Chapter I]{S}, and the kernel  ${K}_2(x)$ satisfies that
\begin{equation}\label{k2}
|\nabla^m K_2(x)|\le C(m)|x|^{-(m+1)},\quad \mathrm{for}~\mathrm{all}~ x\in\R\setminus \{0\},\, m\in \mathbb{N}.
\end{equation}

 Therefore, according to \cite[Chapter V]{S}, for $A_2$-weight $|x|^{2\alpha}$ ($0<\alpha<1$), there is a positive constant $C$ decided by $\alpha$ such that for any $f$ satisfying $f|x|^\alpha\in L^2$,
\begin{equation}\label{C346}\int_\R|\mathbf{T}_1f|^2\,|x|^{2\alpha}\,dx\leq C\int_\R|f|^2\,|x|^{2\alpha}\,dx.
\end{equation}
Set $f=2(\mu(\rho)-1)Du$ in the above estimate, then we admit
\begin{equation}\label{C335}
\begin{split}
\int_\R\left|\mathbf{T}_1\left(2(\mu(\rho)-1)Du\right)\right|^2\,|x|^{2\alpha}\,dx&\leq C\int_\R(\mu(\rho)-1)^2|Du|^2\,|x|^{2\alpha}\,dx\\
&\leq\frac{1}{4}\int_\R|\nabla u|^2\,|x|^{2\alpha}\,dx,
\end{split}
\end{equation}
by setting $\|\mu(\rho)-1\|_{L^\infty}\leq\varepsilon_0\leq (4C)^{-1/2}$ with $C$ given by \eqref{C346}.

Since the kernel of $\mathbf{T}_2$ satisfies the same size and derivative bounds as that of $I_1$, we can apply \eqref{C309} to find a constant $C$ determined by $\alpha$ such that
\begin{equation}\label{C336}
\begin{split}
\left(\int_\R|\mathbf{T}_2(\rho\dot u)|^2\,|x|^{2\alpha}\,dx\right)^{1/2}\leq C\int_\R\mathcal{M}(\rho\dot u)\,|x|^{\alpha}\,dx.
\end{split}
\end{equation}

Combining \eqref{du} with \eqref{C335} and  \eqref{C336}, we obtain  that if $\|\mu(\rho)-1\|_{L^\infty}\le\varepsilon_0$,
\begin{equation*}
\left(\int_\R|\nabla u|^2\,|x|^{2\alpha}\,dx\right)^{1/2} \leq C\int_\R\mathcal{M}(\rho\dot u)\,|x|^\alpha\,dx,
\end{equation*}
which gives \eqref{C332}. Moreover,
in view of $\int_\R\rho\dot u(x)\,dx=0$ and \eqref{C203}, we have
\begin{equation}\label{C340}
\int_\R\mathcal{M}(\rho\dot u)\,(1+|x|)^\alpha\,dx\leq C\left(\int_\R \rho|\dot{u}|\,(1+|x|)^\beta\,dx+\|\rho\dot{u}\|_{L^2}\right).
\end{equation}
Let us check that
\begin{equation}\label{C341}
\|\rho\dot u\|_{L^1}+\|\rho\dot u\|_{L^2}\leq \left(\|\rho\|_{L^1}^{1/2}+\|\rho\|_{L^\infty}^{1/2}\right)\|\sqrt{\rho}\dot u\|_{L^2}, 
\end{equation}
and by the interpolation,
\begin{equation}\label{C342}
\begin{split}
\int_\R \rho|\dot u|\,|x|^\beta\,dx&\leq\left(\int_\R\rho\, |x|^{2\beta}\,dx\right)^{1/2}\left(\int_\R\rho|\dot u|^2\,dx\right)^{1/2}\\
&\leq\|\rho\|_{L^1}^{(1-\beta)/2}\,\|\rho\,|x|^2\|_{L^1}^{\beta/2}\,\|\sqrt{\rho}\dot u\|_{L^2}.
\end{split}
\end{equation}

Substituting \eqref{C341} and \eqref{C342} into \eqref{C340} and using \eqref{C332}, we thus obtain \eqref{C339} and
the proof of Lemma \ref{L32} is therefore finished.
\end{proof}
\begin{remark}\label{R32}
We mention that when $0<\alpha<\beta<1/2$, the constant $\varepsilon_0$ in Lemma \ref{L32} can be selected independent of $\alpha$. Since in such case, the index $2\alpha(<1)$ in \eqref{C346} is strictly away from the critical value $2$ for the $A_2$-weight estimates in $\R$ $($see \cite[Page 218]{S}$),$ thus the constant $C$ in \eqref{C346} can be chosen independent of $\alpha$.
\end{remark}

\subsection{Cancellation and velocity reconstruction}
\quad
By virtue of Lemma \ref{L32}, the trigger of $L^2$ weighted estimates of $\nabla u$ at any time is exactly the cancellation condition:
\begin{equation}\label{C401}
\int_\R\rho\dot u(x,t)\,dx=0.
\end{equation}
We establish it by using compactly supported divergence-free test fields that approximate constant vectors, see \cite{SV}, moreover the same cutoff construction also gives conservation of the total momentum.
These two cancellation mechanisms combining with Lemma \ref{L32} give the proof of Theorem \ref{T3}.

\begin{lemma}\label{LL33}
Let $(\rho,u,P)$ be the unique local strong solution to the system \eqref{C101}--\eqref{1.2} in $\R\times[0,T]$ given by Lemma \ref{L41}. Then \eqref{C401} holds for a.e. $t\in[0,T]$.
\end{lemma}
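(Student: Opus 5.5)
We will test the momentum equation against a compactly supported divergence-free field that approximates a constant vector $e_k$ ($k=1,2$). This kills the pressure, and the only thing left to show is that the viscous term disappears as the cutoff radius grows.

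\textbf{Test field.} Take the stream function $\eta_R(x)=x^\perp_k\,\varphi_R(x)$, where $x^\perp=(-x^2,x^1)$ and $\varphi_R=\varphi(x/R)$ with $\varphi$ as in \eqref{var}. Set
\[
w_R=\nabla^\perp\bigl(x_k^\perp\varphi_R\bigr)\quad\text{for } k=1,2,
\]
possibly up to sign and relabeling. Then $\mathrm{div}\,w_R=0$, $w_R\in C_0^\infty$, $w_R=e_k$ on $B_R$, $\mathrm{supp}\,w_R\subset B_{2R}$, and $|\nabla w_R|\le C/R$ is supported in the annulus $A_R=B_{2R}\setminus B_R$. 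Only $|\nabla w_R|\lesssim R^{-1}$ is needed, not second derivatives.

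\textbf{Testing the equation.} For a.e. $t$, write \eqref{C101}$_2$ in nonconservative form as $\rho\dot u-\mathrm{div}(2\mu(\rho)Du)+\nabla P=0$. This form is available because $\rho_t+\mathrm{div}(\rho u)=0$ and $\rho\in C([0,T];H^1\cap W^{1,q})$. By Lemma~\ref{L41}, for a.e. $t\in(\tau,T)$ we have $\nabla u,P\in H^1$ and $\sqrt\rho u_t\in L^2$. Also $\rho\dot u\in L^1$, since $\|\rho\dot u\|_{L^1}\le\|\rho\|_{L^1}^{1/2}\|\sqrt\rho\dot u\|_{L^2}$ as in \eqref{C341}. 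Multiplying by $w_R$ and integrating by parts, the pressure term vanishes because $\mathrm{div}\,w_R=0$ and $w_R$ has compact support. This gives
\[
\int_\R \rho\dot u\cdot w_R\,dx=-\int_\R 2\mu(\rho)Du:\nabla w_R\,dx .
\]

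\textbf{Passing to the limit.} The left side tends to $\int_\R\rho\dot u^k\,dx$ by dominated convergence, using $\rho\dot u\in L^1$ and $|w_R|\le C$ uniformly (indeed $|w_R|\le 1+C|x|/R\le C$ on $B_{2R}$). The right side is the main point, because the naive bound $\|\nabla u\|_{L^2(A_R)}\|\nabla w_R\|_{L^2}$ is only $O(1)\cdot o(1)$: we have $\|\nabla w_R\|_{L^2}\le C$ uniformly, while $\|\nabla u\|_{L^2(A_R)}\to 0$ since $\nabla u\in L^2$. So
\[
\Bigl|\int 2\mu Du:\nabla w_R\,dx\Bigr|\le C\|\mu\|_{L^\infty}\|\nabla u\|_{L^2(A_R)}\|\nabla w_R\|_{L^2}\le C\|\nabla u\|_{L^2(B_R^c)}\to0 .
\]
This already suffices, with no weighted estimates needed. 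One could also use $\nabla u\in L^p$ for some $p<\infty$ for a quantitative rate, but it is not required. Hence $\int\rho\dot u\,dx=0$ for a.e. $t\in(\tau,T)$, and since $\tau>0$ is arbitrary, for a.e. $t\in(0,T)$.

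\textbf{Main obstacle.} The delicate step is justifying the identity for the strong solution, rather than the limit itself. The steps that need care are integrability of each term against $w_R$ for a.e. $t$, and the equivalence between the conservative and nonconservative forms, where $\rho u\otimes u$ has to be handled locally. These follow from the regularity \eqref{1.7}, since everything is tested against a compactly supported smooth field.
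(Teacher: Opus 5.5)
Your proposal is correct and is essentially the paper's proof. It uses the same divergence-free cutoff $\nabla^\perp(f\varphi_R)$ with $f$ linear, so the pressure drops out and the viscous term vanishes via $\|\nabla w_R\|_{L^2}\le C$ and $\|\nabla u\|_{L^2(B_{2R}\setminus B_R)}\to 0$, with dominated convergence applied to $\rho\dot u\in L^1$. The only difference is cosmetic: you argue at a.e. fixed time, whereas the paper tests against $\Phi_R\,\psi(t)$ and integrates over $(0,T)$ before concluding for a.e. $t$.
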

\begin{proof}
Let $\varphi_R(x)=\varphi(x/R)$ with $\varphi$ the standard truncation function. For a constant vector $a=(a_1,a_2)\in\R$ and $\nabla^\bot=(-\partial_2,\partial_1)$, define
\[
f(x)=a_2x_1-a_1x_2~~\mathrm{and}~~
\Phi_R=\nabla^\perp(f\cdot\varphi_R)
=\varphi_R\cdot a+f\cdot\nabla^\perp\varphi_R.
\]
We admit that $\|\Phi_R\|_{L^\infty}\leq C|a|$, $\mathrm{supp}\nabla\Phi_{R}\subset B_{2R}\setminus B_{R}$, and $\|\nabla\Phi_R\|_{L^2}\leq C|a|$.

For any $\psi(t)\in C_0^\infty([0,T])$, testing the momentum equation \eqref{C101}$_2$ against $\Phi_R\,\psi$ and applying the fact $\mathrm{div}\Phi_R=0$, 
we obtain that
\[
\int_0^T\int_\R\rho\dot u\cdot\Phi_R\,\psi\,dxdt
=-2\int_0^T\int_{B_{2R}\setminus B_{R}}\mu(\rho)Du\cdot\nabla\Phi_R\,\psi\,dxdt,
\]
and hence the regularity assertion \eqref{1.7} implies that
\[
\left|\int_0^T\int_\R\rho\dot u\cdot\Phi_R\,\psi\,dxdt\right|
\leq C|a|\cdot\int_0^T\|Du\|_{L^2(B_{2R}\setminus B_{R})}dt\rightarrow0
~~\mathrm{as}~~R\rightarrow\infty.
\]
Meanwhile $\Phi_R\to a$ pointwise and is uniformly bounded, moreover \eqref{1.7} ensures $\rho\dot{u}\in L^1(\R\times[0,T])$. 
Thus the dominated convergence theorem yields
\[
a\cdot\int_0^T\left(\int_\R\rho\dot u\,dx\right)\psi(t)\,dt=0.
\]

Since $a\in\R$ and $\psi\in C_0^\infty([0,T])$ are arbitrary, we therefore obtain that $\int_\R\rho\dot u\,dx=0$ for a.e. $t\in[0,T]$
and finish the proof of Lemma \ref{LL33}.
\end{proof}

\begin{lemma}\label{LL34}
Let $(\rho,u,P)$ be the unique local strong solution to the system \eqref{C101}--\eqref{1.2} in $\R\times[0,T]$ given by Lemma \ref{L41}.
Then the total momentum is conserved:
\begin{equation}\label{CC402}
\int_\R\rho u(x,t)\,dx
=\int_\R\rho_0 u_0\,dx
~~\text{for a.e. }t\in(0,T).
\end{equation}
In particular, if the zero total initial momentum condition \eqref{zero} is valid, then $\int_\R\rho u\,dx=0$ for a.e. $t\in[0,T]$.
\end{lemma}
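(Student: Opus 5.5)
The plan is to reuse the divergence-free cutoff $\Phi_R=\nabla^\perp(f\varphi_R)=\varphi_R\,a+f\,\nabla^\perp\varphi_R$ from the proof of Lemma \ref{LL33}, with $f(x)=a_2x_1-a_1x_2$ for a constant vector $a$. Recall that $\|\Phi_R\|_{L^\infty}\le C|a|$, $\mathrm{supp}\,\nabla\Phi_R\subset B_{2R}\setminus B_R$ and $\|\nabla\Phi_R\|_{L^2}\le C|a|$. Instead of the convective form $\rho\dot u$, I will use the conservative form of \eqref{C101}$_2$. For $\psi\in C_0^\infty([0,T))$, testing against $\Phi_R\,\psi(t)$ (which is independent of time) and integrating by parts in $t$ should give
\[
-\int_0^T\!\!\int_\R\rho u\cdot\Phi_R\,\psi'\,dxdt-\psi(0)\int_\R\rho_0u_0\cdot\Phi_R\,dx
=\int_0^T\!\!\int_\R\big(\rho u\otimes u-2\mu(\rho)Du\big):\nabla\Phi_R\,\psi\,dxdt .
\]
The pressure term drops out because $\mathrm{div}\,\Phi_R=0$. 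All terms are meaningful thanks to \eqref{1.7}: $\rho u\in L^\infty(0,T;L^1)$ since $\rho\in L^1\cap L^\infty$ and $\sqrt\rho u\in L^2$.

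Next I pass to the limit $R\to\infty$ on each term.
\begin{itemize}
\item The viscous term is bounded by $C|a|\int_0^T\|\nabla u\|_{L^2(B_{2R}\setminus B_R)}\,dt\to0$, by dominated convergence in time.
\item The convective term is the main point, since $\nabla\Phi_R$ is only bounded in $L^2$ and not in $L^\infty$. Because $|f|\le C|a|R$ on the annulus, one has $|\nabla\Phi_R|\le C|a|/R$ there. Hence the term is bounded by
\[
\frac{C|a|}{R}\int_0^T\|\sqrt\rho u\|_{L^2}^2\,dt\;\le\;\frac{C|a|T}{R}\,\sup_t\|\sqrt\rho u\|_{L^2}^2\;\to\;0.
\]
Alternatively, one can bound it by $C|a|\int_0^T\int_{|x|\ge R}\rho|u|^2\,dxdt$ and use dominated convergence.
\item On the left-hand side, $\Phi_R\to a$ pointwise with a uniform bound, and $\rho u,\rho_0u_0\in L^1$, so dominated convergence applies.
\end{itemize}
In the limit this yields
\[
a\cdot\Big(\int_0^T\Big(\int_\R\rho u\,dx\Big)\psi'(t)\,dt+\psi(0)\int_\R\rho_0u_0\,dx\Big)=0 .
\]

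Since $a$ and $\psi$ are arbitrary, the function $t\mapsto\int_\R\rho u\,dx$ has zero distributional derivative on $(0,T)$. It is therefore a.e. equal to a constant, and the $\psi(0)$ term identifies that constant as $\int_\R\rho_0u_0\,dx$. Equivalently, one can take $\psi$ approximating $\mathbf 1_{[0,t]}$ and use Lebesgue points. This gives \eqref{CC402}, and under \eqref{zero} it gives $\int_\R\rho u\,dx=0$.

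The main obstacle I anticipate is justifying the test against $\Phi_R\psi$ down to $t=0$, given that the bound on $\sqrt\rho u_t$ degenerates near $t=0$. I would handle this using $\rho u\in C([0,T];L^1_{\rm loc})$ in the weak sense, which follows from the weak formulation. Failing that, I would first prove constancy on $(\tau,T)$ and then let $\tau\to0$ using the trace sense of the initial condition.
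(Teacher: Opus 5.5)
Your proposal is correct and follows the paper's proof essentially verbatim. It uses the same divergence-free cutoff $\Phi_R=\nabla^\perp(f\varphi_R)$ tested against the conservative form of the momentum equation with $\psi\in C_0^\infty([0,T))$. It also uses the same bounds, $\frac{C}{R}\int_0^T\|\sqrt\rho u\|_{L^2}^2\,dt$ for the convective term and $C\int_0^T\|\nabla u\|_{L^2(B_{2R}\setminus B_R)}\,dt$ for the viscous term, followed by dominated convergence. Your final step, which identifies the constant through the $\psi(0)$ term, and your remark about the initial trace only spell out what the paper takes for granted.
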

\begin{proof}
Let $a\in\R$ and $\Phi_R$ be the divergence-free cutoff vector field in Lemma \ref{LL33}. For $\psi\in C_0^\infty([0,T))$, testing \eqref{C101}$_2$ against $\Phi_R\,\psi$ and applying $\mathrm{div}\Phi_R=0$,  
we obtain 
\begin{equation}\label{CC403}
\begin{split}
&\int_0^T\psi'(t)\int_\R\rho u\cdot\Phi_R\,dx\,dt+\psi(0)\int_\R\rho_0u_0\cdot\Phi_R\,dx\\
&+\int_0^T\int_\R
\bigl(\rho u\otimes u-2\mu(\rho)Du\bigr)\cdot\nabla\Phi_R\,\psi(t)\,dx\,dt=0.
\end{split}
\end{equation}
Direct computations along with regularity conditions \eqref{1.7} yield that
\begin{equation*}
\begin{split}
&\left|\int_0^T\int_\R
\bigl(\rho u\otimes u-2\mu(\rho)Du\bigr)\cdot\nabla\Phi_R\,\psi(t)\,dx\,dt\right|\\
&\leq\frac{C}{R}
\int_0^T\|\sqrt\rho u\|_{L^2}^2\,dt+ C
\int_0^T\|\nabla u\|_{L^2(B_{2R}\setminus B_R)}\,dt \rightarrow 0 ~~\mathrm{as}~~R\rightarrow\infty.
\end{split}
\end{equation*}
By virtue of the dominated convergence theorem, \eqref{CC403} provides that
\begin{equation}\label{CC404-1}
\int_0^T\psi'(t)\,a\cdot\left(\int_\R\rho u\,dx\right)\,dt+\psi(0)\,a\cdot
\left(\int_\R\rho_0u_0\,dx\right)=0.
\end{equation}
Since $a\in\R$ and $\psi\in C_0^\infty([0,T))$ are arbitrary, \eqref{CC404-1} implies that the total momentum equals its initial value for a.e. $t\in[0,T]$. The proof of Lemma \ref{LL34} is finished.
\end{proof}

With all preparations done, let us turn to prove Theorem \ref{T3}.

{\it Proof of Theorem \ref{T3}:}
The momentum equation \eqref{C101}$_2$ implies that $u$ solves the Stokes system
\begin{equation*}
\begin{cases}
-\Delta u+\nabla P=\mathrm{div}\big(2(\mu(\rho)-1)Du\big)-\rho\dot u,\\
\mathrm{div}u=0.
\end{cases}
\end{equation*}
Moreover, Lemma \ref{LL33} ensures that $\int_\R\rho\dot{u}\,dx=0$ for a.e. $t\in[0,T]$, 
therefore we apply Lemma \ref{L32} and \eqref{1.7} to derive that for 
$0<\beta/2<\beta<1$,
\begin{equation}\label{CC306}
\begin{split}
\|\nabla u\,|x|^{\beta/2}\|_{L^2}
&\leq C_{\beta}\left(\|\rho\|_{L^1}^{1/2}+\|\rho\|_{L^\infty}^{1/2}+\|\rho\|_{L^1}^{(1-\beta)/2}\|\rho\,|x|^2\|_{L^1}^{\beta/2}\right)\,\|\sqrt{\rho}\dot u\|_{L^2}\\
&\leq C(1+M(t)^{\beta/2})\|\sqrt{\rho}\dot{u}\|_{L^2}.
\end{split}
\end{equation}

We define $v=\partial_1\Delta^{-1}(\partial_1u)+\partial_2\Delta^{-1}(\partial_2u)$, and it is direct to check that
$\nabla(u-v)=0$. So there is a constant vector field $u_\infty$ such that
\begin{equation}\label{CC302}
  u=u_\infty+v.
\end{equation}
The weighted Hardy-Littlewood-Sobolev inequality \eqref{C307} and \eqref{CC306} imply that
\begin{equation}\label{CC301}
\|v\|_{L^{4/\beta}}\leq C\|\nabla u\,|x|^{\beta/2}\|_{L^2}
\leq C(1+M(t)^{\beta/2})\|\sqrt{\rho}\dot{u}\|_{L^2}<\infty,
\end{equation}
which provides \eqref{CC101} and \eqref{CC102}. In addition, \eqref{1.7} guarantees $\nabla u\in L^3$,
thus $v\in C^{1/3}$ due to Sobolev embedding. 
Consequently, $v$ is uniformly continuous over $\R$, which along with \eqref{CC301} ensures that $v\rightarrow 0$, or equivalently, $u\rightarrow u_\infty$ as $|x|\rightarrow\infty$. 

Finally, multiplying \eqref{CC302} by $\rho$ and integrating over $\R$ yield
\begin{equation}\label{CC303}
m u_\infty=\int_\R\rho u\,dx-\int_\R\rho v\,dx,~~\mathrm{with}~~m=\int_\R\rho\,dx.
\end{equation}
The conservation of momentum in Lemma \ref{LL34} and \eqref{zero} provide that 
$$\int_\R\rho u\,dx=\int_\R\rho_0u_0\,dx=0,$$
which combined with the formula of $v$ and \eqref{CC303} leads to \eqref{CC99},
\begin{equation}\label{CC304}
u_\infty=\frac{-1}{m}\int_\R\rho v\,dx=\frac{1}{m}\int_\R\nabla\Delta^{-1}\rho\cdot\nabla u\,dx.
\end{equation}
Let us justify the last equality of \eqref{CC304}:
Set $A_0=B_{1}$ and $A_k=B_{2^k}\setminus B_{2^{k-1}}$ with $k\geq 1$. For $p=(4-\beta)/(2-\beta)$ satisfying $p>2$ and $s=1-2/p-\beta/2<0$, we have
\begin{equation*}
\begin{split}
&\int_\R\left|\nabla\Delta^{-1}\rho\cdot\nabla u\right|\,dx\\
&=\sum_{k=0}^\infty\int_{A_k}\left|\nabla\Delta^{-1}\rho\cdot\nabla u\right|\,dx\\
&\leq \int_{B_1}\left|\nabla\Delta^{-1}\rho\cdot\nabla u\right|\,dx+\sum_{k=1}^\infty{2^{-(k-1)\beta/2}}\int_{A_k }|\nabla\Delta^{-1}\rho|\cdot|\nabla u|\cdot|x|^{\beta/2}\,dx\\
&\leq C\left(\|\nabla\Delta^{-1}\rho\|_{L^{4}}\|\nabla u\|_{L^2}+\sum_{k=1}^\infty 2^{(k-1)s}\|\nabla\Delta^{-1}\rho\|_{L^{p}}\|\nabla u\,|x|^{\beta/2}\|_{L^2}\right)\\
&\leq C(\|\rho\|_{L^1}+\|\rho\|_{L^\infty})\,(\|\nabla u\|_{L^2}+\|\nabla u\,|x|^{\beta/2}\|_{L^2})\leq C\|\nabla u(1+|x|)^{\beta/2}\|_{L^2},
\end{split}
\end{equation*}
which ensures the integral converges absolutely and gives \eqref{CC100}. Moreover, using the standard truncation $\varphi_R=\varphi(x/R)$ 
and $\nabla u=\nabla v$ provide that
\begin{equation}\label{CC305}
\begin{split}
-\int_\R\varphi_R\cdot\rho v\,dx=\int_\R\varphi_R\nabla\Delta^{-1}\rho\cdot\nabla u\,dx+\int_\R v\cdot\nabla\Delta^{-1}\rho\cdot \nabla\varphi_R\,dx.
\end{split}
\end{equation}
Note that the ``boundary'' term vanishes due to 
\begin{equation*}
\left|\int_\R v\cdot\nabla\Delta^{-1}\rho\cdot \nabla\varphi_R\,dx\right|\leq C R^s\|\nabla\Delta^{-1}\rho\|_{L^p}\|v\|_{L^{4/\beta}}\rightarrow 0~~\mathrm{as}~~R\rightarrow\infty,
\end{equation*}
which along with \eqref{CC305} gives \eqref{CC304} and finishes the proof of Theorem \ref{T3}.
\thatsall


\section{A priori estimates and exponential decay}\label{S4}
\quad 

In this section, we always assume that $(\rho,u,P)$ is the strong solution to the system \eqref{C101}--\eqref{1.2} on $\R\times(0,T]$ guaranteed by Lemma \ref{L41}, and the constants $\alpha,\beta$ always satisfy that $0<\alpha<\beta<1/2$.

\subsection{Lower order estimates}\label{S41}
\quad  
The arguments in this subsection are established for general density-dependent viscosity models, including both the settings of Theorems \ref{T1} and \ref{T2}. First of all, it is well-known that the $L^p$-norm $(1\le p\le\infty)$ of $\rho$ is preserved by the incompressible flow (see \cite[Lemma 3.4]{lions}).
\begin{lemma}\label{le1}
Under the conditions of Theorem \ref{T1}, we have 
\begin{equation}\begin{aligned}\label{3.7} 
\|\rho(\cdot,t)\|_{L^\infty}=\|\rho_0\|_{L^\infty}\leq\bar\rho,\quad 
\|\rho(\cdot,t)\|_{L^1}=\|\rho_0\|_{L^1},\quad t\in (0,T].
\end{aligned}\end{equation}
In particular, there are positive constants   $\underline{\mu}$ and $\overline{\mu}$ determined by  $\bar\rho$ and $\mu$  such that 
\begin{equation}\label{1.5}
\begin{split}
0<\underline{\mu}\leq\mu(\rho(x,t))\leq\overline{\mu}<\infty,\quad (x,t)\in\R\times[0,T].\\
\end{split}
\end{equation}
Meanwhile, it holds that
\begin{equation}\label{C415}
\|\mu(\rho(\cdot,t))-1\|_{L^\infty}=\|\mu(\rho_0)-1\|_{L^\infty}, \quad t\in(0,T].
\end{equation}
\end{lemma}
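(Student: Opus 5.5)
The plan is to view $\rho$ as being transported by the divergence-free field $u$. The $L^p$ norms are then preserved because the flow map is measure preserving. For the strong solution of Lemma \ref{L41} we have $\nabla u\in L^2(\tau,T;W^{1,q})\subset L^1(\tau,T;L^\infty)$ for every $\tau>0$. Near $t=0$ I will use a standard approximation or renormalization argument in the sense of DiPerna--Lions, as in \cite[Lemma 3.4]{lions}. This is enough to define the Lagrangian flow $X_t$ of $u$. Since $\mathrm{div}u=0$, the Jacobian satisfies $\det\nabla X_t\equiv 1$, so $X_t$ is a measure preserving homeomorphism of $\R$. Moreover $\rho(X_t(x),t)=\rho_0(x)$, because $\dot\rho=\rho_t+u\cdot\nabla\rho=0$ by \eqref{C101}$_1$ and $\mathrm{div}u=0$.

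From this representation, the distribution function of $\rho(\cdot,t)$ coincides with that of $\rho_0$. Hence $\|\rho(\cdot,t)\|_{L^p}=\|\rho_0\|_{L^p}$ for every $p\in[1,\infty]$, which gives \eqref{3.7}.

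The same reasoning gives more. For any continuous function $g$, the composition $g(\rho)$ is also transported, so $g(\rho(\cdot,t))=g(\rho_0)\circ X_t^{-1}$. In particular, the essential range of $\rho(\cdot,t)$ equals that of $\rho_0$, which is contained in $[0,\bar\rho]$.

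For \eqref{1.5}, I will use that $\mu\in C^1[0,\infty)$ is positive. By continuity and compactness, $\mu$ attains a positive minimum $\underline\mu$ and a finite maximum $\overline\mu$ on $[0,\bar\rho]$. Since $\rho(x,t)\in[0,\bar\rho]$ almost everywhere, \eqref{1.5} follows.

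For \eqref{C415}, I take $g(s)=|\mu(s)-1|$. Then $\|\mu(\rho(\cdot,t))-1\|_{L^\infty}=\|(\mu(\rho_0)-1)\circ X_t^{-1}\|_{L^\infty}$. This equals $\|\mu(\rho_0)-1\|_{L^\infty}$, since composition with a measure preserving bijection preserves essential suprema.

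The only delicate point is justifying the transport representation down to $t=0$, where $\nabla u$ is merely $L^2_tL^2_x$ with $\nabla u\in L^2(0,T;H^1)$. I would handle this without the flow, by the renormalized formulation. Since $\rho\in C([0,T];H^1\cap W^{1,q})$ and $u\in L^2(0,T;H^1_{loc})$, the product $u\cdot\nabla\rho$ lies in $L^1_{loc}$. The chain rule then gives $\partial_t g(\rho)+u\cdot\nabla g(\rho)=0$ for $g\in C^1$.

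Integrating this in space against cutoffs $\varphi_R$ yields $\int g(\rho(t))\,dx=\int g(\rho_0)\,dx$ for suitable $g$ with $g(0)=0$. The boundary terms vanish, using $\mathrm{div}u=0$ and the decay of $\rho$.

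Choosing $g(s)=(s-k)_+$ shows that $\rho\le\|\rho_0\|_{L^\infty}$. Choosing $g(s)=|s|^p$ gives conservation of each $L^p$ norm. Choosing $g(s)=(|\mu(s)-1|-\kappa)_+$, suitably smoothed, shows that the level sets of $|\mu(\rho)-1|$ above $\kappa$ have the same measure as those of $|\mu(\rho_0)-1|$. This gives equality of the $L^\infty$ norms in \eqref{C415}.
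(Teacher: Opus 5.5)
Your proposal is correct and follows essentially the paper's route: the paper invokes the transport structure with $\mathrm{div}\,u=0$ (citing \cite[Lemma 3.4]{lions}) to get conservation of $\|\rho\|_{L^p}$ for $1\le p\le\infty$, and \eqref{1.5}, \eqref{C415} then follow as you say, from $\rho(\cdot,t)$ being an equimeasurable rearrangement of $\rho_0$ and from continuity and positivity of $\mu$ on $[0,\bar\rho]$. One small fix is needed in your renormalized variant: $g(s)=(|\mu(s)-1|-\kappa)_+$ has $g(0)\neq0$ when $\kappa<|\mu(0)-1|$, so $\int g(\rho)\,dx=\infty$ on the vacuum region, so you should restrict to $\kappa\ge|\mu(0)-1|$ and note that both $L^\infty$ norms are automatically at least $|\mu(0)-1|$, because $\{\rho<\eta\}$ has infinite measure for every $\eta>0$ (alternatively, rely on your flow-composition argument, which avoids this issue).
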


Next, we present a series of  basic energy estimates.
\begin{lemma}\label{le2} 
Under the conditions of Theorem \ref{T1}, there exists a positive constant $C$ determined by  $\bar\rho$, $\mu$, and $\|\sqrt{\rho_0}u_0\|_{L^2}$ such that
\begin{equation}\label{3.8} 
\sup_{t\in[0,T]}\|\sqrt\rho u\|_{L^2}^2+\int_0^T\|\nabla u\|_{L^2}^2\,dt\leq C.
\end{equation}
\end{lemma}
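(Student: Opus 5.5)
We prove Lemma \ref{le2} by the standard energy identity for \eqref{C101}. Multiply the momentum equation \eqref{C101}$_2$ by $u$ and integrate over $\R$. The pressure term vanishes since $\mathrm{div}u=0$. The convective terms combine with the transport equation \eqref{C101}$_1$ in the usual way:
$$\int_\R\big((\rho u)_t+\mathrm{div}(\rho u\otimes u)\big)\cdot u\,dx=\frac12\frac{d}{dt}\int_\R\rho|u|^2\,dx.$$
The viscous term gives $2\int_\R\mu(\rho)|Du|^2\,dx$. Hence
\begin{equation*}
\frac12\frac{d}{dt}\|\sqrt\rho u\|_{L^2}^2+2\int_\R\mu(\rho)|Du|^2\,dx=0.
\end{equation*}

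Next we convert $|Du|^2$ into $|\nabla u|^2$. By \eqref{1.5} we have $\mu(\rho)\geq\underline{\mu}$. Since $\mathrm{div}u=0$, integration by parts gives $2\|Du\|_{L^2}^2=\|\nabla u\|_{L^2}^2$, because $\int\partial_iu^j\partial_ju^i\,dx=\int\partial_iu^i\partial_ju^j\,dx=0$. Integrating in time on $[0,t]$ then yields
$$\|\sqrt\rho u(t)\|_{L^2}^2+\underline{\mu}\int_0^t\|\nabla u\|_{L^2}^2\,ds\leq\|\sqrt{\rho_0}u_0\|_{L^2}^2,$$
with $\underline{\mu}$ depending only on $\bar\rho$ and $\mu$. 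This is \eqref{3.8}.

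The main obstacle is rigor on the whole space. Testing with $u$ itself is problematic because $u\in D^1_{0,0}$ need not be in $L^2$, and vacuum is allowed. We therefore test \eqref{C101}$_2$ with $u\varphi_R$, where $\varphi_R$ is the cutoff \eqref{var}, and pass $R\to\infty$. The boundary terms are of three kinds:
\begin{equation*}
\int\rho|u|^2u\cdot\nabla\varphi_R\,dx,\qquad\int\mu(\rho)Du\cdot(u\otimes\nabla\varphi_R)\,dx,\qquad\int P\,u\cdot\nabla\varphi_R\,dx.
\end{equation*}
\begin{itemize}
\item The convective term is controlled with $\rho^{1/2}u\in L^4$, using \eqref{2.1} and $|\nabla\varphi_R|\le 2R^{-1}$.
\item The viscous term is controlled with $\|u\nabla\varphi_R\|_{L^2}\leq C\|\nabla u\|_{L^2}$. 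This follows from Hardy/BMO-type estimates, after subtracting the far-field constant on the annulus or using $\|u\|_{L^4(B_{2R})}$ growth bounds together with $\nabla u\in L^2$ tails.
\item The pressure term is controlled using $P\in L^2$ and $\nabla\varphi_R$ supported on $B_{2R}\setminus B_R$.
\end{itemize}
Each of these tends to zero in $L^1(0,T)$ using the regularity \eqref{1.7} on $[0,T]$. Alternatively, one may simply note that for the strong solution of Lemma \ref{L41} the identity holds by the regularity class and the construction in \cite{CHZ,lsz}, where it is already justified.
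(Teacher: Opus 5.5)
Your proof is the paper's argument: test \eqref{C101}$_2$ with $u$ to obtain $\frac12\frac{d}{dt}\|\sqrt\rho u\|_{L^2}^2+2\int_{\R}\mu(\rho)|Du|^2\,dx=0$, then use $2\|Du\|_{L^2}^2=\|\nabla u\|_{L^2}^2$ (from $\mathrm{div}u=0$) and $\mu(\rho)\ge\underline{\mu}$ from \eqref{1.5}, and integrate in time. The paper does this formally, without any cutoff.

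One caution on your optional cutoff justification. The bound $\|u\nabla\varphi_R\|_{L^2}\le C\|\nabla u\|_{L^2}$ is false as stated, because $u$ tends to a possibly nonzero constant $u_\infty(t)$ that $\|\nabla u\|_{L^2}$ does not control in two dimensions. You only need the product of $\|u\nabla\varphi_R\|_{L^2}$ with $\|\nabla u\|_{L^2(B_{2R}\setminus B_R)}$ or $\|P\|_{L^2(B_{2R}\setminus B_R)}$ to vanish. That follows because $\|u\nabla\varphi_R\|_{L^2}$ stays bounded in $R$, by the decomposition $u=u_\infty+v$ of Theorem \ref{T3}.
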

\begin{proof}
Taking the $L^2$ inner product of  \eqref{C101}$_2$ with $u$, we get
\begin{equation}\label{3.12}
\frac{1}{2}\,\frac{d}{dt}\int_\R\rho|u|^2\,dx+2\int_\R\mu(\rho)|Du|^2\,dx=0.
\end{equation}
Note that $2\int_\R |Du|^2\,dx=\int_\R|\nabla u|^2\,dx$, so  this along with \eqref{1.5} and \eqref{3.12} yields   \eqref{3.8}. The proof of Lemma \ref{le2} is finished.
\end{proof}

\begin{lemma}\label{le3}
Under the conditions of Theorem \ref{T1}, there exist  positive constants  $\varepsilon_1$ and $C$ depending on $\bar\rho$, $\mu$, $\|\sqrt{\rho_0}u_0\|_{L^2}$,  and $\|\nabla u_0\|_{L^2}$  such that if 
$$\|\mu(\rho_0)-1\|_{L^\infty}\leq \varepsilon_1,$$
it holds that
\begin{equation}\label{3.9}
\sup_{t\in[0,T]}\|\nabla u\|_{L^2}^2+\int_0^T \|\sqrt\rho\dot u\|_{L^2}^2\,dt\leq C.
\end{equation}
\end{lemma}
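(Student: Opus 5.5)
We take the $L^2$ inner product of the momentum equation \eqref{C101}$_2$ with $\dot u=u_t+u\cdot\nabla u$. This is the standard way to get $\sup_t\|\nabla u\|_{L^2}^2+\int_0^T\|\sqrt\rho\dot u\|_{L^2}^2\,dt$. Using $\mathrm{div}u=0$ and the transport of $\mu(\rho)$, i.e. $\partial_t\mu(\rho)+u\cdot\nabla\mu(\rho)=0$, the viscous term gives
\begin{equation*}
-\int_\R\mathrm{div}(2\mu(\rho)Du)\cdot\dot u\,dx=\frac{d}{dt}\int_\R\mu(\rho)|Du|^2\,dx+2\int_\R\mu(\rho)Du\cdot\big(\nabla u\cdot\nabla u\big)\,dx,
\end{equation*}
up to index conventions. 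The pressure term gives $\int_\R\nabla P\cdot\dot u\,dx=\int_\R\nabla P\cdot(u\cdot\nabla u)\,dx$, since $\int\nabla P\cdot u_t=0$. Hence
\begin{equation*}
\frac{d}{dt}\int_\R\mu(\rho)|Du|^2\,dx+\|\sqrt\rho\dot u\|_{L^2}^2\le C\int_\R|\nabla u|^3\,dx+C\Big|\int_\R P\,\partial_iu^j\partial_ju^i\,dx\Big|.
\end{equation*}
For the pressure term we integrate by parts, using $\partial_i\partial_j(u^iu^j)=\partial_iu^j\partial_ju^i$. Both cubic terms are then bounded by $C\|\nabla u\|_{L^3}^3+C\|P\|_{L^3}\|\nabla u\|_{L^3}^2$.

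Next we control the right-hand side by Lemma \ref{pl3}, viewing \eqref{C101}$_2$ as the Stokes system \eqref{2.2} with $f=-\rho\dot u$. We choose $\varepsilon_1$ no larger than the $\varepsilon$ of Lemma \ref{pl3} for $p=3$; this is legitimate because Lemma \ref{le1}, via \eqref{C415}, propagates the smallness $\|\mu(\rho)-1\|_{L^\infty}\le\varepsilon_1$ to all $t$. Then \eqref{C218} gives
\begin{equation*}
\|\nabla u\|_{L^3}\le C\|\nabla u\|_{L^2}^{2/3}\|\rho\dot u\|_{L^2}^{1/3}\le C\|\nabla u\|_{L^2}^{2/3}\|\sqrt\rho\dot u\|_{L^2}^{1/3}.
\end{equation*}
For the pressure we use the same representation, $P=\Delta^{-1}\mathrm{div}\big(\mathrm{div}(2(\mu-1)Du)-\rho\dot u\big)$. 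This gives $\|P\|_{L^3}\le C\varepsilon_1\|\nabla u\|_{L^3}+C\|\nabla u\|_{L^2}^{2/3}\|\sqrt\rho\dot u\|_{L^2}^{1/3}$, where we apply the Gagliardo–Nirenberg interpolation to the $\rho\dot u$ part exactly as in \eqref{C224}, with $\|\cdot\|_{L^2}$ of that part bounded by $C\|\nabla u\|_{L^2}$. Thus the cubic terms are at most
\begin{equation*}
C\|\nabla u\|_{L^2}^2\|\sqrt\rho\dot u\|_{L^2}\le\frac12\|\sqrt\rho\dot u\|_{L^2}^2+C\|\nabla u\|_{L^2}^4.
\end{equation*}

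Since $\underline\mu\le\mu(\rho)$, the quantity $\int\mu(\rho)|Du|^2$ is equivalent to $\|\nabla u\|_{L^2}^2$. We obtain
\begin{equation*}
\frac{d}{dt}\int_\R\mu(\rho)|Du|^2\,dx+\frac12\|\sqrt\rho\dot u\|_{L^2}^2\le C\|\nabla u\|_{L^2}^2\cdot\int_\R\mu(\rho)|Du|^2\,dx.
\end{equation*}
Gronwall's inequality, with $\int_0^T\|\nabla u\|_{L^2}^2\,dt\le C$ from Lemma \ref{le2}, yields \eqref{3.9}. The constant depends only on $\bar\rho$, $\mu$, $\|\sqrt{\rho_0}u_0\|_{L^2}$ and $\|\nabla u_0\|_{L^2}$, and not on $T$.

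The main obstacle is justifying the integrations by parts in the whole plane, since $u$ is only in $D^1$ and need not be in $L^2$. In particular one must handle the pressure term $\int P\,\partial_iu^j\partial_ju^i$ and the identity $\int\nabla P\cdot u_t=0$. We would handle this with the regularity \eqref{1.7} on $[\tau,T]$, where $\nabla u,P\in H^1$. Both terms involve only $\nabla u$, $P$ and $u_t$ paired divergence-free, and we cut off with $\varphi_R$; the boundary terms vanish as $R\to\infty$ because $P\in L^2$, $\nabla u\in L^2\cap L^4$, and $u$ grows at most logarithmically in BMO. Finally we let $\tau\to0$ using $u_0\in D^1_{0,0}$.
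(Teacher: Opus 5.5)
\textbf{There is a genuine gap in your pressure estimate.} You claim that the $\rho\dot u$-part of the pressure, $P_1=-\Delta^{-1}\mathrm{div}(\rho\dot u)$, satisfies $\|P_1\|_{L^2}\le C\|\nabla u\|_{L^2}$ ``exactly as in \eqref{C224}''.

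The argument in \eqref{C224} works for $\mathbf{T}_2(f)=-\nabla\Delta^{-1}\mathbb{P}f$ only because the Leray projector annihilates $\nabla P$. That is what lets $\mathbb{P}(\rho\dot u)$ be replaced by $\mathbb{P}\,\mathrm{div}(2\mu(\rho)Du)$. The pressure sees the complementary, irrotational part of $\rho\dot u$. If you substitute the equation there, you get $\Delta^{-1}\mathrm{div}(\rho\dot u)=\Delta^{-1}\mathrm{div}\,\mathrm{div}(2\mu(\rho)Du)-P$, which is circular.

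No bound of this type can hold. For $\mu\equiv1$ and $f=\nabla g$, the Stokes system has $u=0$ and $P=g$. Moreover, in $\mathbb{R}^2$ the function $\Delta^{-1}\mathrm{div}(\rho\dot u)$ is not even in $L^2$ unless $\int\rho\dot u\,dx=0$. Even then, its $L^2$ norm is controlled only through $\|\rho\dot u\|_{\mathcal{H}^1}$, which by \eqref{C339} involves $\|\rho|x|^2\|_{L^1}$. At this stage that moment is not uniformly bounded: Lemma \ref{le9} only gives $(1+t)^2$ growth. This is why the paper bounds $\|P\|_{L^2}$ only in \eqref{pe}, after Lemma \ref{le5}.

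\textbf{The step cannot be repaired by a correct $L^p$ bound.} By Hardy--Littlewood--Sobolev, $\|P_1\|_{L^3}\le C\|\rho\dot u\|_{L^{6/5}}\le C\|\sqrt\rho\dot u\|_{L^2}$. Then $\|P_1\|_{L^3}\|\nabla u\|_{L^3}^2\le C\|\sqrt\rho\dot u\|_{L^2}^{5/3}\|\nabla u\|_{L^2}^{4/3}$, and Young's inequality leaves $C\|\nabla u\|_{L^2}^{8}$. With $y=\|\nabla u\|_{L^2}^2$ this is $y'\le Cy^3\cdot y$. Gronwall cannot close that with only $\int_0^T y\,dt\le C$ from Lemma \ref{le2}, at least not for large data and uniformly in $T$. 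Any H\"older split $L^r\times L^{2r'}\times L^{2r'}$ with $r<\infty$ has the same defect: the power of $\|\sqrt\rho\dot u\|_{L^2}$ exceeds one.

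\textbf{What is missing is the endpoint structure.} You noticed that $\partial_iu^j\partial_ju^i=\partial_i\partial_j(u^iu^j)$, but then discarded the structure with H\"older. The paper instead keeps $\partial_iu\cdot\nabla u^i$ intact as the product of the divergence-free field $\partial_iu$ and the curl-free field $\nabla u^i$. By the div--curl lemma of Coifman--Lions--Meyer--Semmes, this product lies in $\mathcal{H}^1$ with norm at most $C\|\nabla u\|_{L^2}^2$. The paper pairs it with $P_1\in\mathrm{BMO}$, using $\|P_1\|_{\mathrm{BMO}}\le C\|\nabla P_1\|_{L^2}\le C\|\rho\dot u\|_{L^2}$; see \eqref{3.30}. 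This gives exactly $C\|\rho\dot u\|_{L^2}\|\nabla u\|_{L^2}^2$, which is linear in $\|\sqrt\rho\dot u\|_{L^2}$. Only with that bound does your final Gronwall step close.

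The rest of your argument agrees with the paper. This includes the viscous term, the $\Delta^{-1}\mathrm{div}\,\mathrm{div}(2(\mu(\rho)-1)Du)$ part of $P$, and $\|\nabla u\|_{L^3}^3\le C\|\nabla u\|_{L^2}^2\|\sqrt\rho\dot u\|_{L^2}$ from \eqref{C218}.
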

\begin{proof}
Multiplying \eqref{C101}$_2$ by $\dot{u}$, we obtain after integrating by parts that
\begin{equation}\label{3.6}
2\int_\R\mu(\rho)Du:D\dot u\,dx+\int_\R\rho|\dot{u}|^{2}\,dx=\int_\R P\,\mathrm{div}\dot{u}\,dx=\int_\R P\,\partial_{i}u\cdot\nabla u^{i}\,dx.
\end{equation}
The last equality is due to $\mathrm{div}\dot u=\mathrm{div}u_t+u\cdot\nabla\mathrm{div}u+\partial_{i}u\cdot\nabla u^{i}=\partial_{i}u\cdot\nabla u^{i}$.

To estimate the first term in \eqref{3.6}, we check that
\begin{equation}
\begin{split}\label{3.28}
&2\int_\R\mu(\rho) Du:D\dot u\, dx\\ 
&=2\int_\R\mu(\rho)(Du:D u_t+Du: (u\cdot\nabla Du))\,dx+2\int_\R\mu(\rho)\,\partial_iu^j\,\partial_i u\cdot\nabla u^j\,dx\\
&\geq\frac{d}{dt}\int_\R\mu(\rho)|Du|^2\,dx-\int_\R|Du|^2\left(\partial_t\mu(\rho)+u\cdot\nabla\mu(\rho)\right)\,dx-C\int_\R|\nabla u|^3\,dx\\
&=\frac{d}{dt}\int_\R\mu(\rho)|Du|^2\,dx-C\int_\R|\nabla u|^3\,dx,
\end{split}
\end{equation}
where we have applied \eqref{1.5} and 
$\partial_t\mu(\rho)+u\cdot \nabla \mu(\rho)=0$
owing to \eqref{C101}$_1$ and $\mathrm{div}u=0$.  

Observe that  $P$ can be obtained via
\begin{equation}\label{C225}
P=-\Delta^{-1}\mathrm{div}(\rho\dot u)+\Delta^{-1}\mathrm{div}\mathrm{div}(2\mu(\rho) Du).
\end{equation}
Then in view of \eqref{C225}, \eqref{1.5}, and the fact:  
``$\Delta^{-1}\mathrm{div}\mathrm{div}$" maps $L^3(\R)$ to itself (see \cite[Page 13]{S}), the right hand side of \eqref{3.6} satisfies
\begin{equation}\label{C416}
\begin{split}
&\int_\R P\,\partial_{i}u\cdot\nabla u^{i}\,dx\\
&=-\int_\R\Delta^{-1}\mathrm{div}(\rho\dot u)\,\partial_{i}u\cdot\nabla u^{i}\,dx+\int_\R \Delta^{-1}\mathrm{div}\mathrm{div}(2\mu(\rho)Du)\,\partial_{i}u\cdot\nabla u^i\,dx\\
&\leq -\int_\R\Delta^{-1}\mathrm{div}(\rho\dot u)\,\partial_{i}u\cdot\nabla u^{i}\,dx+C\int_\R|\nabla u|^3\,dx.
\end{split}
\end{equation}
Moreover,
we adopt the idea from \cite{des} to argue that
\begin{equation}\begin{aligned}\label{3.30}
\left|\int_\R\Delta^{-1}\mathrm{div}(\rho\dot u)\,\partial_{i}u\cdot\nabla u^{i}\,dx\right|
&\leq \|\Delta^{-1}\mathrm{div}(\rho\dot u)\|_{\mathrm{BMO(\R)}}\|\partial_{i}u\cdot\nabla u^{i}\|_{\mathcal{H}^{1}(\R)}\\
&\leq C\,\|\nabla\Delta^{-1}\mathrm{div}(\rho\dot u)\|_{L^2}\|\nabla u \|_{L^2}^{2}\\
&\leq C\,\|\rho\dot u\|_{L^2}\|\nabla u\|_{L^2}^2,
\end{aligned}\end{equation}
due to BMO$(\mathbb{R}^2)$ being the dual of $\mathcal{H}^1(\mathbb{R}^2)$, and the div-curl lemma (\cite[Theorem II.1]{clms}): for $\mathrm{div}(\partial_i u)=0$ and $ \mathrm{curl}(\nabla u^i)=0,$ there is a constant $C$ such that
\begin{equation}\notag
\|\partial_{i}u\cdot\nabla u^{i}\|_{\mathcal{H}^1(\R)}\leq C\,\|\nabla u\|_{L^2}^2.
\end{equation}

Substituting \eqref{3.28}, \eqref{C416}, \eqref{3.30} into \eqref{3.6} and using Lemma \ref{pl3},  \eqref{3.7}--\eqref{C415}, there exists a positive constant $\varepsilon_1$ such that if $\|\mu(\rho_0)-1\|_{L^\infty}\le \varepsilon_1$,
\begin{equation}\begin{aligned}\label{3.13}
\frac{d}{dt}\int_\R \mu(\rho)|Du|^2\, dx+\int_\R\rho|\dot u|^2\,dx
&\leq C\left(\|\rho\dot u\|_{L^2}\|\nabla u\|_{L^2}^2+\|\nabla u\|_{L^3}^3\right)\\
&\leq C\,\|\rho\dot u\|_{L^2}\|\nabla u\|_{L^2}^2\\
&\leq \frac{1}{2}\int_\R\rho|\dot u|^2\,dx+C\,\|\nabla u\|_{L^2}^4.
\end{aligned}\end{equation}
Applying Gronwall's inequality to the above inequality  along with \eqref{1.5} and  \eqref{3.8}  leads to \eqref{3.9}. The proof of Lemma \ref{le3} is completed.
\end{proof}

\begin{lemma}\label{le4}
Under the conditions of Theorem \ref{T1}, there exists  positive constants $\varepsilon_2$ $(\le \varepsilon_1)$  and 
 $C$ depending on $\bar\rho$, $\mu$, $\|\sqrt{\rho_0}u_0\|_{L^2}$, $\|\nabla u_0\|_{L^2}$ such that if 
$$\|\mu(\rho_0)-1\|_{L^\infty}\leq \varepsilon_2,$$
it holds that  for $\delta(t)\triangleq\min\{1,t\}$,
\begin{equation}\label{3.10}
\sup_{t\in[0,T]}\delta(t)\,\|\sqrt\rho\dot u\|_{L^2}^2+\int_0^T \delta(t)\,\|\nabla\dot u\|_{L^2}^2\,dt\leq C.
\end{equation}
\end{lemma}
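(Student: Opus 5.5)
The plan is the standard time-weighted estimate for the material derivative. Apply the operator $\dot u^j[\partial_t(\cdot)+\mathrm{div}(u\,\cdot)]$ to the $j$-th component of \eqref{C101}$_2$, written as $\rho\dot u-\mathrm{div}(2\mu(\rho)Du)+\nabla P=0$, and integrate over $\R$. Using \eqref{C101}$_1$ and $\mathrm{div}u=0$, this gives
\begin{equation*}
\frac12\frac{d}{dt}\int_\R\rho|\dot u|^2\,dx=\int_\R \dot u^j\big[\partial_t\partial_i(2\mu(\rho)D_{ij}u)+\mathrm{div}(u\,\partial_i(2\mu(\rho)D_{ij}u))\big]dx-\int_\R\dot u^j\big[\partial_j P_t+\mathrm{div}(u\,\partial_jP)\big]dx.
\end{equation*}
We integrate by parts in the viscous term. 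The transport identity $\partial_t\mu(\rho)+u\cdot\nabla\mu(\rho)=0$ removes all derivatives falling on $\mu(\rho)$. The leading contribution is then $-2\int\mu(\rho)|D\dot u|^2$, and the remaining commutator terms are bounded by $C\int|\nabla u|^2|\nabla\dot u|$. For the pressure term, $\mathrm{div}\dot u=\partial_iu\cdot\nabla u^i$, so after integrating by parts it is bounded by $C\int|\nabla\dot u|\,|\nabla u|\,|P|+C\int|P_t|\,|\nabla u|^2$. To avoid needing $P_t$, rewrite it as $\int \dot P\,\partial_iu\cdot\nabla u^i$ plus terms containing $u\cdot\nabla$ (integrated back by parts). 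Alternatively, and more cleanly, use the Hoff-type decomposition: test the pressure contribution against $\mathrm{div}\dot u$ and control $\dot P$ through the elliptic equation for $P$.

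The key is that, by Korn's identity $2\|D\dot u\|_{L^2}^2\ge\|\nabla\dot u\|_{L^2}^2-\|\mathrm{div}\dot u\|_{L^2}^2$ and $\mu\ge\underline\mu$, we obtain
\begin{equation*}
\frac{d}{dt}\|\sqrt\rho\dot u\|_{L^2}^2+\underline\mu\|\nabla\dot u\|_{L^2}^2\le C\big(\|\nabla u\|_{L^4}^4+\|P\|_{L^4}^2\|\nabla u\|_{L^4}^2\big).
\end{equation*}
Here $\mu$ close to $1$ (i.e.\ $\varepsilon_2$ small) is used so that the perturbative pressure/Stokes estimates of Lemma \ref{pl3} apply. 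We then estimate the right-hand side with Lemma \ref{pl3}, \eqref{C218} with $p=4$, which gives $\|\nabla u\|_{L^4}^4\le C\|\nabla u\|_{L^2}^2\|\rho\dot u\|_{L^2}^2$. For $P$, use \eqref{C225} together with Gagliardo–Nirenberg/BMO-type bounds to get $\|P\|_{L^4}\le C\|\nabla u\|_{L^2}^{1/2}\|\rho\dot u\|_{L^2}^{1/2}$. By Lemma \ref{le3}, $\|\nabla u\|_{L^2}\le C$, so the right-hand side is $\le C\|\sqrt\rho\dot u\|_{L^2}^2$ (with $\bar\rho$ absorbed).

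Multiplying by $\delta(t)$ gives
\begin{equation*}
\frac{d}{dt}\big(\delta\|\sqrt\rho\dot u\|_{L^2}^2\big)+\underline\mu\,\delta\|\nabla\dot u\|_{L^2}^2\le C\|\sqrt\rho\dot u\|_{L^2}^2+\delta'\|\sqrt\rho\dot u\|_{L^2}^2 .
\end{equation*}
Integrating in time and using $\int_0^T\|\sqrt\rho\dot u\|_{L^2}^2\,dt\le C$ from \eqref{3.9} yields \eqref{3.10} directly, with no Gronwall exponent growing in $T$, because the bound is linear. Since $\delta(0)=0$, no initial compatibility on $\dot u(0)$ is needed. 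All calculations are justified for the strong solution of Lemma \ref{L41} on $[\tau,T]$ and then letting $\tau\to0$.

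The main obstacle is the pressure term and the $\nabla\mu(\rho)$ factors. Since $\rho$ is only in $W^{1,q}$, we cannot afford $\nabla^2u$ estimates carrying $\|\nabla\mu\|$ in a uniform-in-$T$ bound. The terms must therefore be arranged so that $\mu(\rho)$ appears undifferentiated. This is done by the transport identity, and by treating the pressure through $\mathrm{div}\dot u=\partial_iu\cdot\nabla u^i$ together with the representation \eqref{C225}, the $\mathcal H^1$–BMO duality as in \eqref{3.30}, and the smallness \eqref{CC90} to absorb the $(\mu-1)$ parts into $\|\nabla\dot u\|_{L^2}^2$. If the term with $P_t$ resists this, the fallback is to estimate $\|\nabla P\|$-free quantities using $\|P\|_{L^4}\le C\|\rho\dot u\|_{L^{4/3}}+C\|\nabla u\|_{L^4}$, with $\|\rho\dot u\|_{L^{4/3}}$ controlled via $\|\rho\|_{L^1}$.
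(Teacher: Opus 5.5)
Your handling of the viscous part is the same as the paper's (the transport identity removes every $\nabla\mu(\rho)$, then Korn and $\delta(0)=0$). The pressure term, however, is never closed. After integrating by parts it contributes
$\int_\R (P_t+u\cdot\nabla P)\,\mathrm{div}\dot u\,dx=\int_\R \dot P\,\partial_iu\cdot\nabla u^i\,dx$, and your differential inequality for $\|\sqrt\rho\dot u\|_{L^2}^2$ simply drops it.

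Neither remedy you suggest works. Rewriting in terms of $\dot P$ does not remove $P_t$. There is also no Hoff-type control of $\dot P$ here: in the incompressible system $P$ is a Lagrange multiplier, not a function of $\rho$. The elliptic equation for $P$ would express $\dot P$ through $\mathrm{div}\,(\rho\dot u)_t$, i.e.\ through $\rho\ddot u$, which is not controlled at this level. Your fallback $L^4$ bound on $P$ does not involve $P_t$ at all.

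The missing idea is to put this term under a time derivative. Using $\partial_t\nabla u=\nabla\dot u-\nabla(u\cdot\nabla u)$ and $\mathrm{div}u=0$, which cancel all second derivatives of $u$, one gets
\[
\begin{aligned}
\int_\R (P_t+u\cdot\nabla P)\,\partial_iu\cdot\nabla u^i\,dx
&=\frac{d}{dt}\int_\R P\,\partial_iu\cdot\nabla u^i\,dx-2\int_\R P\,\partial_iu\cdot\nabla\dot u^i\,dx\\
&\quad+2\int_\R P\,\partial_iu\cdot\nabla u\cdot\nabla u^i\,dx .
\end{aligned}
\]
Move $\frac{d}{dt}\int_\R P\,\partial_iu\cdot\nabla u^i\,dx$ to the left. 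Then control it pointwise in time by $\frac14\|\sqrt\rho\dot u\|_{L^2}^2+C\|\nabla u\|_{L^2}^4$, using \eqref{C225}, the $\mathcal H^1$--BMO duality with the div-curl lemma \eqref{C416}--\eqref{3.30}, and \eqref{C218} for $\|\nabla u\|_{L^3}^3$; this is \eqref{3.38}. After multiplying by $\delta(t)$, this boundary term is absorbed by $\frac12\delta\|\sqrt\rho\dot u\|_{L^2}^2$, and its $\delta'$ contribution is bounded via \eqref{3.9}.

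Separately, your bound $\|P\|_{L^4}\le C\|\nabla u\|_{L^2}^{1/2}\|\rho\dot u\|_{L^2}^{1/2}$ is not available. The component $\Delta^{-1}\mathrm{div}(\rho\dot u)$ of $P$ is driven by the gradient part of the forcing, which $\nabla u$ does not see, so its $L^2$ norm is not controlled by $\|\nabla u\|_{L^2}$. With the correct bound $\|P\|_{L^4}\le C(\|\sqrt\rho\dot u\|_{L^2}+\|\nabla u\|_{L^2})$ (your fallback), the right-hand side is superlinear in $\|\sqrt\rho\dot u\|_{L^2}$; the paper gets $C(\|\sqrt\rho\dot u\|_{L^2}^4+\|\nabla u\|_{L^2}^4)$. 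So your claim that no Gronwall is needed fails. Gronwall is still harmless, because its exponent is $\int_0^T\|\sqrt\rho\dot u\|_{L^2}^2\,dt\le C$ by \eqref{3.9}.
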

\begin{proof}
First, applying the operator $\partial_t+
u\cdot\nabla $ to  \eqref{C101}$_2$, we obtain that
\begin{equation}\begin{aligned}\notag
&\rho\dot u_t+\rho u\cdot\nabla\dot u-\mathrm{div}(2\mu(\rho)D\dot u)+\partial_i\left(\mu(\rho)\partial_i u\cdot\nabla u\right)+\partial_i\left(\mu(\rho)\nabla u^i\cdot\nabla u\right)\\
&+2\,\partial_i\left(\mu(\rho)\nabla u\cdot\nabla u^i\right)+\nabla P_t+u\cdot\nabla\left(\nabla P\right)=0.
\end{aligned}\end{equation}
Taking the $L^2$ inner product of this equality with $\dot u$ and using $\mathrm{div}u=0$ and \eqref{1.5} yield
\begin{equation}\label{3.34}\begin{aligned}
&\frac{1}{2}\,\frac{d}{dt}\int_\R\rho|\dot{u}|^{2}dx+2\int_\R\mu(\rho)\,|D\dot u|^2\,dx\\
&=\int_\R\mu(\rho)\left(\partial_i u\cdot\nabla u\cdot\partial_i\dot u+\nabla u^i\cdot\nabla u\cdot\partial_i\dot u+2\,\partial_i\dot u\cdot\nabla u\cdot\nabla u^i\right)\,dx\\
&\quad+\int_\R \left(P_t+u\cdot\nabla P\right)\mathrm{div}\dot u\,dx+\int_\R \dot u\cdot\nabla u\cdot\nabla P\,dx\\
&\leq C\int_\R |\nabla u|^2|\nabla \dot u|\,dx+\int_\R \left(P_t+u\cdot\nabla P\right)\partial_i u\cdot\nabla u^i\,dx-\int_\R P\,\partial_i u\cdot\nabla  \dot u^i\,dx.
\end{aligned}
\end{equation}

Note that integration by parts combined with $\mathrm{div} u=0$ shows that
\begin{equation}\begin{aligned}
&\int_\R \left(P_t+u\cdot\nabla P\right)\partial_i u\cdot\nabla u^i\,dx\\
&=\frac{d}{dt}\int_\R P \,\partial_i u\cdot\nabla u^i\, dx-2\int_\R P\,\partial_i u\cdot\nabla \left(\dot u^i-u\cdot\nabla u^i\right)\,dx\\
&\quad-\int_\R P  u\cdot\nabla\left(\partial_i u\cdot\nabla u^i\right)\,dx\\
&=\frac{d}{dt}\int_\R P \,\partial_i u\cdot\nabla u^i\, dx-2\int_\R P\, \partial_i u\cdot\nabla \dot u^i\,dx+2\int_\R P\, \partial_i u\cdot\nabla u\cdot\nabla u^i\,dx\\
&\quad +2\int_\R P\, u\cdot\nabla \partial_i u\cdot\nabla u^i\,dx-\int_\R P\, u\cdot\nabla\left(\partial_i u\cdot\nabla u^i\right)\,dx\\
&=\frac{d}{dt}\int_\R P\, \partial_i u\cdot\nabla u^i\,dx-2\int_\R P\, \partial_i u\cdot\nabla \dot u^i\,dx+2\int_\R P\,\partial_i u\cdot\nabla u\cdot\nabla u^i\,dx.
\end{aligned}
\end{equation}
Moreover in view of $\mathrm{div}\dot u=\partial_iu\cdot\nabla u^i$, we also have
\begin{equation}\begin{aligned}\label{3.36}
2\int_\R|D\dot u|^2\,dx 
&=\int_\R\left(|\nabla \dot u|^2+(\mathrm{div}\dot u)^2\right)\,dx 
\geq\int_\R|\nabla \dot u|^2\,dx-\int_\R|\nabla u|^4\,dx.
\end{aligned}\end{equation}

Consequently, it follows from \eqref{3.34}--\eqref{3.36}, \eqref{1.5}, and Young's inequality that
\begin{equation}\label{3.37}\begin{aligned}
&\frac{1}{2}\,\frac{d}{dt}\int_\R\rho|\dot{ u }|^{2}\,dx+\frac{1}{2}\underline\mu\int_\R|\nabla \dot u|^2\,dx-\frac{d}{dt}\int_\R P\,\partial_i u\cdot\nabla u^i\,dx\\
&\leq C\int_\R|\nabla u|^4\,dx-3\int_\R P\,\partial_i  u\cdot\nabla  \dot u^i\,dx+2\int_\R P\, \partial_i u\cdot\nabla u\cdot\nabla u^i\,dx\\
&\leq \frac{1}{4}\underline\mu\int_\R|\nabla \dot u|^2\,dx+C\left(\int_\R|P|^4\,dx+\int_\R|\nabla u|^4\,dx\right)\\
&\leq \frac{1}{4}\underline\mu\int_\R|\nabla \dot u|^2\,dx+C\left(\|\sqrt{\rho}\dot u\|_{L^2}^4+\|\nabla u\|_{L^2}^4\right).
\end{aligned}
\end{equation}
In the last line we used  the fact: for some constant $\varepsilon_2\,(\le \varepsilon_1)$, if $\|\mu(\rho_0)-1\|_{L^\infty}\le \varepsilon_2$,
\begin{equation}\begin{aligned}\label{4}\notag
\|\nabla u\|_{L^4}+\|P\|_{L^4}&\leq C\left(\|\nabla u\|_{L^4}+\|\Delta^{-1}\mathrm{div}(\rho\dot u)\|_{L^4}\right)\\
&\leq C\left(\|\rho\dot u\|_{L^{2}}+\|\nabla u\|_{L^2}+\|\rho\dot u\|_{L^{4/3}}\right)\\
&\leq C\left(\|\sqrt\rho\dot u\|_{L^{2}}+\|\nabla u\|_{L^2}\right),
\end{aligned}\end{equation}
due to Lemma \ref{pl3}, \eqref{C225}, and \eqref{3.7}--\eqref{C415}.

In addition, combining \eqref{C218}, \eqref{C416}, \eqref{3.30},  \eqref{3.7}--\eqref{C415}, and \eqref{3.9}  leads to that 
\begin{equation}\label{3.38}\begin{aligned}
&\left|\int_\R P\,\partial_i u\cdot\nabla u^i\,dx\right|\\
&\leq C\left(\|\rho\dot u\|_{L^{2}}\|\nabla u\|_{L^2}^2+\|\nabla u\|_{L^3}^3\right)
\leq \frac{1}{4}\|\sqrt\rho\dot u\|_{L^{2}}^2+C\|\nabla u\|_{L^2}^4.
\end{aligned}
\end{equation}

Multiplying \eqref{3.37} by $\delta(t)$ and applying Gronwall's equality to the resulting inequality, we derive from \eqref{3.8}, \eqref{3.9}, and \eqref{3.38}  that \eqref{3.10} is valid provided $\|\mu(\rho_0)-1\|_{L^\infty}\le \varepsilon_2$. The proof of Lemma \ref{le4} is therefore completed.
\end{proof}

The next lemma provides the space weighted estimate (not uniform with respect to $t$) on $\rho$,  see also \cite[Lemma 3.5]{lx}.
\begin{lemma}\label{le9}
Under the conditions of Theorem \ref{T1}, there exists a positive constant $C$ depending on $\|\rho_0\,(1+|x|^2)\|_{L^1}$ and $\|\sqrt{\rho_0}u_0\|_{L^2}$ such that for any $t\in(0,T]$,
\begin{equation}\label{k37} 
\int_\R \rho(x,t)\,(1+|x|^2)\,dx\leq C(1+t)^2.
\end{equation}
In particular, there is a positive constant $N$ depending only on $\|\rho_0\|_{L^1}$, $\|\rho_0\,(1+|x|^2)\|_{L^1}$, and $\|\sqrt{\rho_0}u_0\|_{L^2}$ such that for any $t\in (0,T]$, 
\begin{equation}\label{k38} 
\int_{B_{N(1+t)}}\rho(x,t)\,dx\geq \frac{1}{2}\,\|\rho_0\|_{L^1}.
\end{equation}
\end{lemma}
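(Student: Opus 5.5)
The plan is to derive a differential inequality for the second moment from the transport equation \eqref{C101}$_1$, and then use the resulting bound together with a Chebyshev argument to localize the mass. For the moment bound, multiply \eqref{C101}$_1$ by $1+|x|^2$, more precisely by $(1+|x|^2)\varphi_R$ with $\varphi_R$ the truncation in \eqref{var}, and integrate by parts. Because $\mathrm{div}u=0$, this gives
\begin{equation*}
\frac{d}{dt}\int_\R \rho\,(1+|x|^2)\varphi_R\,dx=\int_\R \rho u\cdot\nabla\big((1+|x|^2)\varphi_R\big)\,dx .
\end{equation*}
The gradient is bounded by $C(1+|x|)$ uniformly in $R\ge1$, since $|\nabla\varphi_R|\le 2/R$ and this factor is supported where $|x|\sim R$. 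Cauchy--Schwarz then bounds the right-hand side by
\begin{equation*}
C\|\sqrt\rho u\|_{L^2}\Big(\int_\R\rho(1+|x|^2)\,dx\Big)^{1/2}.
\end{equation*}

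Next let $R\to\infty$, which is justified by monotone convergence together with the regularity $\rho(1+|x|^2)\in L^\infty(0,T;L^1)$ from \eqref{1.7}. The integral form of the identity then gives, for $Y(t)=\int_\R\rho(1+|x|^2)\,dx$,
\begin{equation*}
Y(t)\le Y(0)+C\int_0^t\|\sqrt\rho u\|_{L^2}\,Y(s)^{1/2}\,ds .
\end{equation*}
By Lemma \ref{le2}, $\|\sqrt\rho u\|_{L^2}\le C$ uniformly, where the constant depends only on the initial energy (the viscosity only enters the dissipation term, which is dropped). This leads to $\frac{d}{dt}Y^{1/2}\le C$ in integrated form, hence $Y(t)^{1/2}\le Y(0)^{1/2}+Ct$. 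Squaring yields \eqref{k37}. The only delicate point is making the $R\to\infty$ limit and the square-root Gronwall argument rigorous: I would work with $\sqrt{Y_R+\epsilon}$ and pass to the limit. This is routine.

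For \eqref{k38}, use conservation of mass from Lemma \ref{le1} and Chebyshev's inequality:
\begin{equation*}
\int_{\{|x|\ge N(1+t)\}}\rho\,dx\le \frac{1}{N^2(1+t)^2}\int_\R\rho|x|^2\,dx\le \frac{C}{N^2}.
\end{equation*}
Choose $N$ so that $C/N^2\le \frac12\|\rho_0\|_{L^1}$. Then
\begin{equation*}
\int_{B_{N(1+t)}}\rho\,dx\ge\|\rho_0\|_{L^1}-\tfrac12\|\rho_0\|_{L^1},
\end{equation*}
which is \eqref{k38}. The constant $N$ depends only on $\|\rho_0\|_{L^1}$, $\|\rho_0(1+|x|^2)\|_{L^1}$ and $\|\sqrt{\rho_0}u_0\|_{L^2}$, as claimed.
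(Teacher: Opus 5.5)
Your proposal is correct and follows the paper's proof. The paper likewise multiplies the transport equation by $1+|x|^2$, bounds the flux term by $2\|\sqrt\rho u\|_{L^2}\big(\int\rho(1+|x|^2)\,dx\big)^{1/2}$, integrates in time using the energy bound, and obtains \eqref{k38} by Chebyshev with $N^2=2C/\|\rho_0\|_{L^1}$. Your cutoff $\varphi_R$ and square-root Gronwall step make rigorous what the paper does formally. Your remark that only $\sup_t\|\sqrt\rho u\|_{L^2}\le\|\sqrt{\rho_0}u_0\|_{L^2}$ is needed is what actually justifies the stated dependence of $C$; the paper's appeal to \eqref{3.8} glosses over this point.
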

\begin{proof}
Multiplying \eqref{C101}$_1$ by $(1+|x|^2)$  shows that
\begin{equation}\label{C417}
\frac{d}{dt}\int_\R \rho\,(1+|x|^2)\, dx \leq 2\int _\R\rho|u|\cdot|x|\,dx\leq 2\,\|\sqrt\rho u\|_{L^2}\left(\int_\R\rho\,(1+|x|^2)\,dx\right)^{{1}/{2}}.
\end{equation}
Integrating \eqref{C417} over $(0,t)$ and using \eqref{3.8} lead to \eqref{k37} directly.

 Next,  setting $N^2=2C/\|\rho_0\|_{L^1}$ with $C$ in \eqref{k37}, we get from \eqref{k37} and \eqref{3.7} that
\begin{equation*}
\int_{\{|x|\geq  N(1+t)\}}\rho(x,t)\,dx\leq\frac{1}{N^2(1+t)^2}\int_{\{|x|\geq  N(1+t)\}}\rho(x,t)\,|x|^2\,dx\leq \frac{1}{2}\,\|\rho\|_{L^1},
\end{equation*}
which combined with \eqref{3.7} gives \eqref{k38} and finishes the proof of Lemma \ref{le9}.
\end{proof}
 
Now we declare the properties of exponential decay to the system \eqref{C101}--\eqref{1.2} in $\R$.

\begin{lemma}\label{le5} 
Under the conditions of Theorem \ref{T1}, suppose that
$$\|\mu(\rho_0)-1\|_{L^\infty}\leq \varepsilon_3,$$
with $\varepsilon_3\triangleq\min\{\varepsilon_0,\varepsilon_2\}$, where $\varepsilon_0$ and $\varepsilon_2$ are given by Lemmas \ref{L32} and \ref{le4} respectively.
There are positive   constants $C$ and $\sigma$ both depending on $\bar\rho$, $\mu$,  $\|\rho_0\,(1+|x|^2)\|_{L^1}$, $\|\sqrt{\rho_0}u_0\|_{L^2}$, and $\|\nabla u_0\|_{L^2}$ such that
\begin{equation}\begin{aligned}\label{3.11}
&\sup_{t\in[0,T]}\left(e^{\sigma t}\left(\|\sqrt\rho u\|_{L^2}+\| \nabla u\|_{L^{2} }+\delta(t)\,\|\sqrt\rho\dot u\|_{L^2}^2\right)\right)+\sup_{t\in[0,T]}\|\rho\,|x|^2\|_{L^1}\\
&+\int_0^Te^{\sigma t}\left(\| \nabla u\|_{L^{2} }^2+\|\sqrt\rho\dot u\|_{L^2}^2+\delta(t)\,\|\nabla \dot u\|_{L^2}^2\right)\,dt\leq C.
\end{aligned}\end{equation} 
\end{lemma}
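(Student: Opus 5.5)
The argument is a bootstrap closing the exponential decay together with a \emph{uniform-in-time} bound on the second moment $M(t)=\|\rho|x|^2\|_{L^1}$. These two are coupled: the damping inequality needs $M(t)$ bounded through \eqref{C339}, and uniform boundedness of $M(t)$ needs $\int_0^\infty\|\sqrt\rho u\|_{L^2}\,dt<\infty$, which decay provides. I would set $T^*=\sup\{T_1\le T: \sup_{[0,T_1]}M(t)\le 2K\}$ for a large constant $K$ fixed below, work on $[0,T^*]$, and show that the stronger bound $M\le K$ holds there, so that $T^*=T$.

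\textbf{Step 1 (damping inequality).} On $[0,T^*]$, Lemma \ref{LL34} and \eqref{zero} give $\int\rho u\,dx=0$. Hence, with $u_\infty$ from Theorem \ref{T3},
\[
\|\sqrt\rho u\|_{L^2}^2=\int\rho u\cdot(u-u_\infty)\,dx\le\|\sqrt\rho u\|_{L^2}\,\|\rho\|_{L^{r}}^{1/2}\,\|u-u_\infty\|_{L^{4/\beta}}
\]
with $r$ chosen so that the exponents match. Using \eqref{CC101}, \eqref{CC102} (equivalently \eqref{C339}) together with $M\le 2K$, this gives $\|\sqrt\rho u\|_{L^2}\le C_K\|\sqrt\rho\dot u\|_{L^2}$. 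Similarly, $\|\nabla u\|_{L^2}$ is controlled: \eqref{C339} with $\alpha=0$ (or \eqref{C303}) and Lemma \ref{L22} give $\|\nabla u\|_{L^2}\le C_K\|\sqrt\rho\dot u\|_{L^2}$. Here $\varepsilon_3\le\varepsilon_0$ is used, and \eqref{C415} keeps $\|\mu(\rho)-1\|_{L^\infty}$ small for all $t$.

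\textbf{Step 2 (Gronwall with weight $e^{\sigma t}$).} I add \eqref{3.12} to $\eta$ times \eqref{3.13}, with $\eta$ small. This yields
\[
\frac{d}{dt}E+c\big(\|\nabla u\|_{L^2}^2+\|\sqrt\rho\dot u\|_{L^2}^2\big)\le C\|\nabla u\|_{L^2}^4,
\qquad
E=\tfrac12\|\sqrt\rho u\|_{L^2}^2+\eta\int\mu|Du|^2\,dx.
\]
The quartic term is absorbed in either of two ways. One option: integrability from \eqref{3.8} gives $\int_0^T\|\nabla u\|_{L^2}^2\,dt\le C$, so a Gronwall factor $\exp(C\int\|\nabla u\|^2)$ is bounded. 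Alternatively, for $t\ge t_0$, $\|\nabla u\|_{L^2}$ is small. By Step 1, $E\le C_K(\|\nabla u\|_{L^2}^2+\|\sqrt\rho\dot u\|_{L^2}^2)$. Gronwall with the integrating factor then gives $E(t)\le Ce^{-2\sigma t}$, with $\sigma$ depending on $K$, and also $\int e^{\sigma t}(\ldots)\,dt\le C$. Multiplying \eqref{3.37} (with \eqref{3.38}) by $\delta(t)e^{\sigma t}$ and applying Gronwall again, using the already obtained integrability of $e^{\sigma t}\|\sqrt\rho\dot u\|^2$ and the bound \eqref{3.10}, gives the $\dot u$ parts of \eqref{3.11}.

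\textbf{Step 3 (closing the moment bound).} As in \eqref{C417},
\[
\frac{d}{dt}\sqrt{\|\rho(1+|x|^2)\|_{L^1}}\le\|\sqrt\rho u\|_{L^2}\le Ce^{-\sigma t}.
\]
Hence $M(t)\le\big(\|\rho_0(1+|x|^2)\|_{L^1}^{1/2}+C/\sigma\big)^2$. This is the main obstacle: $C$ and $\sigma$ from Step 2 depend on $K$, so the bound is not automatically $\le K$. I would resolve this by splitting time. On $[0,t_0]$, Lemma \ref{le9} gives $M\le C(1+t_0)^2$ independently of $K$. For $t\ge t_0$, Step 1 is run with a constant $C_K$ that grows at most like $K^{\beta/2}$ (from \eqref{CC102}), so $\sigma\gtrsim K^{-\beta}$. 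Meanwhile the energy at time $t_0$ is small, decaying from Lemma \ref{le3}/\eqref{3.8}, e.g. $\int_{t_0}^\infty$ of the energy dissipation is small. Since $\beta<1$, the sublinear growth $K^{\beta/2}$ lets one pick $K$ large so that the increment is at most $K/2$. Continuity of $M(t)$ then gives $T^*=T$, and all constants are independent of $T$.
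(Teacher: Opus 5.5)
Your bootstrap on $M(t)=\|\rho|x|^2\|_{L^1}$ is a legitimate alternative to the paper's argument, but the step you identify as the main obstacle is not closed correctly. Track the exponents. Under $M\le 2K$, \eqref{CC101}--\eqref{CC102} give $\|\sqrt\rho u\|_{L^2}^2\le C(1+K^{\beta})\|\sqrt\rho\dot u\|_{L^2}^2$, so the damping rate is $\sigma\sim K^{-\beta}$. If the quartic term is handled through \eqref{3.9}, the prefactor in $E(t)\le E(0)e^{-2\sigma t}$ does not depend on $K$. Hence $\int_0^\infty\|\sqrt\rho u\|_{L^2}\,dt\le C\sqrt{E(0)}\,K^{\beta}$, and \eqref{C417} yields $\sqrt{M(t)}\le\|\rho_0(1+|x|^2)\|_{L^1}^{1/2}+C\sqrt{E(0)}\,K^{\beta}$. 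The increment of $M$ is therefore of order $K^{2\beta}$. This is at most $K/2$ for large $K$ only when $2\beta<1$, so your justification ``since $\beta<1$'' is false for $\beta\in[1/2,1)$.

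The time-splitting rescue does not repair this. Estimates \eqref{3.8}--\eqref{3.9} give integrability of $\|\nabla u\|_{L^2}^2$ and $\|\sqrt\rho\dot u\|_{L^2}^2$, but they say nothing about smallness of $\|\sqrt\rho u(t_0)\|_{L^2}$, which is part of $E(t_0)$. To get that smallness you need a Poincar\'e inequality at time $t_0$, i.e.\ the moment bound of Lemma \ref{le9}. Its constant $(1+t_0)^{2\beta}$ again produces decay only when $2\beta<1$. Moreover, $M(t_0)\le C(1+t_0)^2$ forces $K\gtrsim(1+t_0)^2$. Closing for $\beta>1/2$ would then require a quantitative rate $E(t_0)\lesssim(1+t_0)^{2-4\beta}$, not just $E(t_0)\to0$.

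The fix is to choose $\beta<1/2$ from the start. Theorem \ref{T3} allows any $\beta\in(0,1)$, Section \ref{S4} works with $\beta<1/2$, and by Remark \ref{R32} the constant $\varepsilon_0$ is then uniform. With this choice your bootstrap closes at $t_0=0$, with no splitting.

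For comparison, the paper avoids any continuity argument. It inserts the unconditional but non-uniform bound $M(t)\le C(1+t)^2$ of Lemma \ref{le9} into \eqref{CC102} with $\beta_0<1/2$. This gives $\frac{d}{dt}E+C_3(1+t)^{-2\beta_0}E\le0$ and hence stretched-exponential decay $e^{-2\sigma(1+t)^{1-2\beta_0}}$. That decay is time-integrable, so \eqref{C417} gives the uniform moment bound \eqref{C426}. After that the Poincar\'e constant is uniform, genuine exponential decay follows, and the $\dot u$ estimates come from \eqref{3.37}--\eqref{3.38} exactly as you propose. The same threshold $2\beta<1$ appears there as the divergence of $\int_0^\infty(1+t)^{-2\beta_0}\,dt$. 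The paper's route needs no tracking of the $K$-dependence, while your corrected route dispenses with Lemma \ref{le9}.
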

\begin{proof}
First, according to \eqref{1.5}, \eqref{3.12}, \eqref{3.9}, and \eqref{3.13}, we can find a small constant $C_1\,(<1)$ such that 
\begin{equation}\label{3.14}
\frac{d}{dt}\int_\R\left(\rho|u|^2+C_1\mu(\rho)|Du|^2\right)\,dx+\int_\R\left(\mu(\rho)|D u|^2+\frac{C_1}{2}\,\rho|\dot u|^2\right)\,dx\leq 0.
\end{equation}
In view of Theorem \ref{T3}, we first select $\beta_0\in(0,1/2)$ and $\|\mu(\rho_0)-1\|_{L^\infty}\le\varepsilon_0$ with $\varepsilon_0$ given in \eqref{CC90}. 
For the far field velocity $u_{\infty}(t)$, it holds that
\begin{equation}\label{C425}
\begin{split}
&\left(\int_\R|u-u_{\infty}(t)|^{4/\beta_0}\,dx\right)^{\beta_0/4}\leq  C\left(1+\|\rho\,|x|^2\|_{L^1}^{\beta_0/2}\right)\|\sqrt{\rho}\dot u\|_{L^2},
\end{split}
\end{equation}
which combined with  \eqref{CC402}, \eqref{3.7}, and \eqref{k37} provides that for some constant $C_2\,(>1)$,
\begin{equation}\label{3.15}
\begin{split}
\int_\R\rho |u|^2\,dx
&=\int_\R\rho |u-u_{\infty}(t)|^2\,dx-\int_\R\rho |u_{\infty}(t)|^2\,dx
\\
&\leq\int_\R\rho|u-u_{\infty}(t)|^2\,dx\\
&\leq \|\rho\|_{L^{2/(2-\beta_0)}}\|u-u_\infty(t)\|_{L^{4/\beta_0}}^2\leq C_2\,(1+t)^{2\beta_0} \|\sqrt\rho\dot u\|_{L^2}^2.
\end{split}
\end{equation}
Collecting \eqref{1.5}, \eqref{3.14}, and \eqref{3.15} shows that 
\begin{equation*} 
\frac{d}{dt}\int_\R\left(\rho|u|^2+C_1\,\mu(\rho)|Du|^2\right)\,dx+C_3\,(1+t)^{-2\beta_0}\int_\R\left(\rho|u|^2+C_1\,\mu(\rho)|Du|^2\right)\,dx\le 0,
\end{equation*}
with $C_3=C_1/(2C_2)$.
Thus for $\sigma\triangleq C_3/(2-4\beta_0)$, it holds that for any $t\in(0,T]$,
\begin{equation}\label{3.16}
\int_\R\left(\rho|u|^2+C_1\,\mu(\rho)|Du|^2\right)\,dx\leq C\,e^{-2\sigma(1+t)^{1-2\beta_0}}.
\end{equation}
In particular, \eqref{C417} and \eqref{3.16} lead to
\begin{equation}\label{C426}
\sup_{0\leq t\leq T}\int_\R\rho(x,t)\,(1+|x|^2)\,dx\leq \left(\|\rho_0\,(1+|x|^2)\|_{L^1}^{1/2}+2\int_0^T\|\sqrt\rho u\|_{L^2}\,dt\right)^{2}\leq C.
\end{equation}

Then let us substitute \eqref{3.7} and \eqref{C426} back to \eqref{C425} and \eqref{3.15} to find that
\begin{equation}\label{C427}
\int_\R\rho |u|^2\,dx\leq C \|\sqrt\rho\dot u\|_{L^2}^2.
\end{equation}
Consequently, \eqref{C427} in fact implies that
\begin{equation*} 
\frac{d}{dt}\int_\R\left(\rho|u|^2+C_1\,\mu(\rho)|Du|^2\right)\,dx+C_4\int_\R\left(\rho|u|^2+C_1\,\mu(\rho)|Du|^2\right)\,dx\le 0,
\end{equation*}
which along with the Gronwall's inequality leads to
\begin{equation} \label{CC404}
\int_\R\left(\rho|u|^2+C_1\,\mu(\rho)|Du|^2\right)\,dx\leq C e^{-C_4 t}.
\end{equation}
Moreover, we deduce from \eqref{3.14}, \eqref{CC404}, and \eqref{1.5} that for some constant $\sigma$,
\begin{equation}\label{3.17}
 \int_0^Te^{\sigma(1+t)}\left(\|\nabla u\|^2_{L^2}+\|\sqrt\rho \dot u\|^2_{L^2}\right)\,dt\leq C.
\end{equation}
By \eqref{3.37}, \eqref{3.38}, \eqref{3.17},  \eqref{3.9}, and Gronwall's inequality,  we thus get that
\begin{equation}\label{3.18}
\sup_{t\in[0,T]}\left(\delta(t)\,e^{\sigma(1+t)}\|\sqrt\rho \dot u\|_{L^2}^2\right)
+\int_{0}^T\delta(t)\,e^{\sigma(1+t)}\|\nabla \dot u\|_{L^2}^2\,dt\leq C,
\end{equation}
if $\|\mu(\rho_0)-1\|_{L^\infty}\le\varepsilon_3\triangleq\min\{\varepsilon_0,\varepsilon_2\}$. 
Finally, combining \eqref{CC404}--\eqref{3.18} with \eqref{1.5}  gives \eqref{3.11} and finishes the  proof of Lemma \ref{le5}.
\end{proof}

\subsection{Higher order estimates}
\quad
Note that the estimates in Subsection \ref{S41} do not involve spatial derivatives of $\rho$.  
If we impose a stronger regularity: $H^1\cap W^{1,q}$ with $q>2$ for $\rho$ in Theorem \ref{T1}, or $\mu(\rho)\equiv 1$ in Theorem \ref{T2}, further uniform higher order estimates are in orders, 
which lead to the global strong solutions and uniform regularities of flow maps. Here we present the higher order estimates for the density-dependent viscosity model, and the corresponding ones for the case $\mu(\rho)\equiv 1$ are simpler, see Remark \ref{R43}.

We first show the uniform $L^1(0,T;L^\infty)$-norm of $\nabla u$.
\begin{lemma}\label{le6}
Under the conditions of Theorem \ref{T1}, there exist  positive constants $\varepsilon_4$ $(\le \varepsilon_3)$  and $C$ depending on  $\bar\rho$, $\mu$, $q$, 
and the initial values such that if
$$\|\mu(\rho_0)-1\|_{L^\infty}\leq\varepsilon_4,$$
 it holds that
\begin{equation}\label{3.19}
\sup_{0\leq t\leq T}\|\rho\|_{H^1\cap W^{1,q}}+\int_0^T\|\nabla u\|_{L^\infty}\,dt\leq C.
\end{equation}
\end{lemma}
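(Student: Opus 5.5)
We will prove \eqref{3.19} by coupling a transport estimate for $\nabla\rho$ with the Stokes estimate \eqref{2.5} and a Beale--Kato--Majda type (or Gagliardo--Nirenberg) bound for $\|\nabla u\|_{L^\infty}$, then closing with the exponential decay of Lemma \ref{le5}. Since $\nabla\mu(\rho)=\mu'(\rho)\nabla\rho$ and $\mu\in C^1$ with $\rho\le\bar\rho$, we have $|\nabla\mu(\rho)|\le C|\nabla\rho|$. Differentiating \eqref{C101}$_1$ and using $\mathrm{div}u=0$ gives, for $r\in\{2,q\}$,
\begin{equation*}
\frac{d}{dt}\|\nabla\rho\|_{L^r}\leq C\|\nabla u\|_{L^\infty}\|\nabla\rho\|_{L^r}.
\end{equation*}
Gronwall then reduces everything to a bound on $\int_0^T\|\nabla u\|_{L^\infty}\,dt$, which must be obtained without circularity.

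For the velocity, we apply \eqref{2.5} with $f=-\rho\dot u$ and $p=q$:
\begin{equation*}
\|\nabla^2u\|_{L^q}\leq C\bigl(\|\rho\dot u\|_{L^q}+\|\nabla\rho\|_{L^q}\|\nabla u\|_{L^\infty}\bigr).
\end{equation*}
The last term prevents closing directly, since it is quadratic in the unknown. We therefore write $\nabla u$ via \eqref{du} as $\mathbf{T}_1(2(\mu-1)Du)-\mathbf{T}_2(\rho\dot u)$. The $\mathbf{T}_2$ part is bounded in $L^\infty$ by $C\|\rho\dot u\|_{L^2}^{\theta}\|\rho\dot u\|_{L^q}^{1-\theta}$, because its kernel is of order $-1$ and $q>2$. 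For the $\mathbf{T}_1$ part we use the smallness of $\mu-1$ together with a logarithmic interpolation:
\begin{equation*}
\|\nabla u\|_{L^\infty}\le C\bigl(\|\nabla u\|_{L^2}+\|\rho\dot u\|_{L^q}\bigr)\log\bigl(e+\|\nabla\rho\|_{L^q}\bigr)+C.
\end{equation*}
This log bound is obtained from the $C^\gamma$ estimate of $(\mu-1)Du$, via an interpolation $\|\nabla u\|_{L^\infty}\le C\|\nabla u\|_{L^2}^{a}\|\nabla^2u\|_{L^q}^{1-a}$ combined with a Brezis--Wainger argument. If the log route fails, the alternative is to exploit the smallness $\varepsilon$ in a $C^\gamma$ Schauder perturbation, giving $\|\nabla u\|_{C^\gamma}\le C\|\rho\dot u\|_{L^q}+C\varepsilon\|\nabla\rho\|_{L^q}\|\nabla u\|_{L^\infty}$. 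Inserting either bound into the transport inequality gives a log-Gronwall inequality for $y=\log(e+\|\nabla\rho\|_{L^2\cap L^q})$:
\begin{equation*}
y'\le C g(t)\,y,\qquad g=\|\nabla u\|_{L^2}+\|\rho\dot u\|_{L^q}.
\end{equation*}
Hence $y$ stays bounded once $g\in L^1(0,\infty)$.

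It remains to show $\int_0^\infty\|\rho\dot u\|_{L^q}dt<\infty$ uniformly in $T$. By Gagliardo--Nirenberg with weight $\rho$ (the lemma after Lemma \ref{pl2}, using the uniform moment bound in \eqref{3.11} and \eqref{k38}), we get $\|\rho\dot u\|_{L^q}\le C(\|\sqrt\rho\dot u\|_{L^2}+\|\nabla\dot u\|_{L^2})$, where the factor $N^3$ is uniform because $\sup_t\|\rho|x|^2\|_{L^1}\le C$ allows a fixed $N$. Then Cauchy--Schwarz and \eqref{3.11} give
\begin{equation*}
\int_1^T\bigl(\|\sqrt\rho\dot u\|_{L^2}+\|\nabla\dot u\|_{L^2}\bigr)dt\le\Bigl(\int e^{-\sigma t}\Bigr)^{1/2}\Bigl(\int e^{\sigma t}(\cdots)^2\Bigr)^{1/2}\le C.
\end{equation*}
On $(0,1)$ the $\delta(t)$ weights cost $t^{-1/2}$, and $\int_0^1t^{-1/2}(t\|\nabla\dot u\|^2)^{1/2}dt$ is finite by Cauchy--Schwarz. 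The term $\int\|\nabla u\|_{L^2}dt$ is bounded similarly by exponential decay.

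The main obstacle is the nonlinear coupling term $\nabla\mu\cdot Du$: $\|\nabla u\|_{L^\infty}$ must be estimated with at most logarithmic dependence on $\|\nabla\rho\|_{L^q}$, which is where $\varepsilon_4$ will be chosen small.
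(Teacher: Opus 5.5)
Your reduction to a bound on $\int_0^T\|\nabla u\|_{L^\infty}\,dt$ via the transport estimate matches the paper. Your remark that the uniform moment bound \eqref{C426} lets you apply \eqref{2.1} with a fixed $N$, avoiding the $(1+t)^5$ factor of \eqref{3.23}, is also correct.

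The gap is the central step. The logarithmic bound $\|\nabla u\|_{L^\infty}\le C(\|\nabla u\|_{L^2}+\|\rho\dot u\|_{L^q})\log(e+\|\nabla\rho\|_{L^q})+C$ does not follow from the tools you name. Gagliardo--Nirenberg interpolation gives power-type dependence. A Brezis--Wainger or Kozono--Taniuchi inequality needs $\|\nabla u\|_{H^1}$ or $\|\nabla u\|_{BMO}$ bounded independently of $\nabla\rho$, and neither is:
\begin{itemize}
\item $\|\nabla^2u\|_{L^2}$ contains $\|\nabla\mu(\rho)\cdot Du\|_{L^2}$, which is linear in $\|\nabla\rho\|_{L^q}$.
\item Without using regularity of $\rho$, multiplication by $\mu(\rho)-1$ does not preserve BMO. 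So $\mathbf{T}_1(2(\mu(\rho)-1)Du)$ is controlled in BMO only by $C\varepsilon\|\nabla u\|_{L^\infty}$, and a log inequality then produces $C\varepsilon\|\nabla u\|_{L^\infty}\log(\cdots)$, which cannot be absorbed.
\end{itemize}
The additive $+C$ would also make $\int_0^T\|\nabla u\|_{L^\infty}dt$ grow like $T$; it silently disappears when you write $y'\le Cg\,y$.

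Your Schauder alternative does not give a logarithm either. First, $[\mu(\rho)]_{C^{1-2/q}}\le C\|\nabla\rho\|_{L^q}$ carries no factor $\varepsilon$. Smallness enters only by interpolating against $\|\mu(\rho)-1\|_{L^\infty}\le\varepsilon$, at the cost of H\"older exponent. Second, converting the $C^\gamma$ bound into an $L^\infty$ bound (interpolation with $\|\nabla u\|_{L^2}$ plus Young) leaves a positive power of $\|\nabla\rho\|_{L^q}$ in $\|\nabla u\|_{L^\infty}$. The transport inequality is therefore superlinear, not log-Gronwall.

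What actually happens, and what the paper uses, is a superlinear ODE whose superlinear part carries a power of $\varepsilon$. The paper splits $\mathbf{Q}_1=\mathbf{T}_1(2(\mu(\rho)-1)Du)$ at a scale $\lambda$:
\begin{itemize}
\item The near part is $\lesssim\lambda^{1/3}(1+\|\nabla\rho\|_{L^4})g$, with $g=\|\nabla u\|_{L^2}+\|\sqrt\rho\dot u\|_{L^2}+\|\rho\dot u\|_{L^3}$. The key input is \eqref{C218}, which controls $\|\nabla u\|_{L^{12}}$ with no $\nabla\rho$.
\item After integration by parts, the far part is $\le\frac12\|\nabla u\|_{L^\infty}+C\varepsilon\lambda^{-1}\|\nabla u\|_{L^2}$.
\end{itemize}
Optimizing $\lambda$ gives \eqref{C423} and then \eqref{C424}: $\frac{d}{dt}\|\nabla\rho\|_{L^4}\le C(\varepsilon^{1/4}\|\nabla\rho\|_{L^4}^{7/4}+\|\nabla\rho\|_{L^4})g$. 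This stays bounded only because $\varepsilon_4$ is chosen small depending on $\|\nabla\rho_0\|_{L^4}$ and $\int_0^\infty g\,dt$, which is why $\varepsilon_4$ depends on the initial data. A corrected version of your Schauder route, run as a continuity argument with $\varepsilon$ small depending on $\|\nabla\rho_0\|_{L^q}$ and $\int_0^\infty g\,dt$, would close the same way.

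Two further steps need repair.
\begin{itemize}
\item Near $t=0$, Cauchy--Schwarz only gives $\int_0^1\|\nabla\dot u\|_{L^2}dt\le(\int_0^1t^{-1}dt)^{1/2}(\int_0^1t\|\nabla\dot u\|_{L^2}^2dt)^{1/2}$, and $\int_0^1t^{-1}dt=\infty$. You must first interpolate, e.g. $\|\rho\dot u\|_{L^3}\le\|\rho\dot u\|_{L^2}^{1/2}\|\rho\dot u\|_{L^6}^{1/2}$ as in \eqref{3.22}. Then only $\|\nabla\dot u\|_{L^2}^{1/2}$ appears, against the integrable weight $t^{-1/4}$.
\item The bound $\|\mathbf{T}_2(\rho\dot u)\|_{L^\infty}\le C\|\rho\dot u\|_{L^2}^{\theta}\|\rho\dot u\|_{L^q}^{1-\theta}$ fails by scaling for $\theta<1$, and $\theta=1$ is the false $L^2\to L^\infty$ endpoint. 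One Lebesgue exponent must lie below $2$, e.g. $\|\rho\dot u\|_{L^{6/5}}+\|\rho\dot u\|_{L^3}$ as in \eqref{C419}. This is available since $\rho\in L^1$.
\end{itemize}
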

\begin{proof}
For simplicity, we assume $q=4$. First, differentiating  \eqref{C101}$_1$ gives 
\begin{equation}\notag
\left(\nabla \rho\right)_t + u \cdot \nabla (\nabla\rho) + \nabla u \cdot \nabla  \rho=0.
\end{equation}
Multiplying this by $\nabla\rho\,|\nabla\rho|^{p-2}$ with $p\in [2,q]$ and integrating by parts lead to 
\begin{equation}\label{3.39}
\frac{d}{dt}\|\nabla \rho \|_{L^p}\leq \|\nabla \rho\|_{L^p}\|\nabla u\|_{L^\infty}.
\end{equation}

Recalling \eqref{du} in Lemma \ref{L32}, 
\begin{equation}\label{C422}
\begin{split}
\nabla u
&=-\nabla\Delta^{-1}\mathbb{P}\,\mathrm{div}\left(2(\mu(\rho)-1)Du\right)-\nabla\Delta^{-1}\mathbb{P}(\rho\dot u)\\
&=2\int_\R {K}_2(x-y)\, \mathrm{div}\big((\mu(\rho)-1)Du\big)(y)\,dy-\nabla\Delta^{-1}\mathbb{P}(\rho\dot u) \\
&\triangleq \mathbf{Q}_1(x)+\mathbf{Q}_2(x).
\end{split}
\end{equation}
where $K_2$ satisfies \eqref{k2}  and  $\mathbb{P}=\mathrm{Id}-\nabla\Delta^{-1}\mathrm{div}$ is the Leray projector.
 
In order to bound $\mathbf{Q}_1(x)$, we utilize the truncation function $\varphi\in C_0^\infty(\R)$ defined by  \eqref{var} again 
and set $\varphi_\lambda(\cdot)=\varphi\big(\cdot/\lambda\big)$ for some $\lambda=\lambda(t)$ determined later, then we write 
\begin{equation}
\begin{split}\label{Q1}
\mathbf{Q}_1(x)
&=2\int_\R\varphi_\lambda(x-y) K_2(x-y)\,\mathrm{div}\left((\mu(\rho)-1)Du\right)(y)\,dy\\
&\quad+2\int_\R\left(1-\varphi_\lambda(x-y)\right) K_2(x-y)\,\mathrm{div}\left((\mu(\rho)-1)Du\right)(y)\,dy\\
&\triangleq \mathbf{Q}_{11}(x)+\mathbf{Q}_{12}(x).
\end{split}
\end{equation}
From \eqref{k2}, we have 
$|\nabla^m K_2(x)|\leq C(m)|x|^{-(m+1)}$ for any  $m\in\mathbb{N}$ and $x\in\R\setminus\{0\}$, this jointed to \eqref{C415} yields  that
\begin{equation}\label{C420}
\begin{split}
|\mathbf{Q}_{11}(x)|&\leq C\int_{\{|x-y|\leq2\lambda\}}|x-y|^{-1}\left|\nabla\mu(\rho)\cdot Du+(\mu(\rho)-1)\cdot\nabla^2 u\right|(y)\,dy\\
&\leq C\,\lambda^{1/3}\left(\|\nabla\mu(\rho)\cdot Du\|_{L^3}+\|\nabla^2u\|_{L^3}\right)\\
&\le C\,\lambda^{1/3}\left(\|\nabla\rho\|_{L^4}+1\right)\left(\|\nabla u\|_{L^2}+\|\rho\dot u\|_{L^2}+\|\rho\dot u\|_{L^3}\right).
\end{split}
\end{equation}
In the last line, we have applied Lemma \ref{pl3} to check that if $\|\mu(\rho_0)-1\|_{L^\infty}\le \varepsilon_{41}$ for some small $\varepsilon_{41}$,
\begin{equation*}
\begin{split} 
&\|\nabla^2 u\|_{L^3}
\leq C\left(\|\rho\dot u\|_{L^3} +\|\nabla\mu(\rho)\cdot Du\|_{L^3}\right),\\
&\|\nabla\mu(\rho)\cdot Du\|_{L^3}\leq C\,\|\nabla\rho\|_{L^4}\|\nabla u\|_{L^{12}}\leq C\,\|\nabla\rho\|_{L^4}\left(\|\nabla u\|_{L^2}+\|\rho\dot u\|_{L^2}\right).\\
\end{split}
\end{equation*}
Meanwhile, integrating by parts indicates that
\begin{equation}
\begin{split}
|\mathbf{Q}_{12}(x)| 
&\leq C\,\|\mu(\rho)-1\|_{L^\infty}\int_\R(|\nabla\varphi_\lambda||x-y|^{-1}+(1-\varphi_\lambda)|x-y|^{-2})|D u|(y)\,dy\\
& \leq C_4\,\|\mu(\rho)-1\|_{L^\infty}\left(\|\nabla u\|_{L^\infty}+\|\nabla u\|_{L^2}\||x-y|^{-2}\|_{L^2(\{|x-y|\geq \lambda\})}\right)\\
&\leq \frac{1}{2}\|\nabla u\|_{L^\infty}+C\,\varepsilon_4\,\lambda^{-1}\|\nabla u\|_{L^2},
\end{split}
\end{equation}
provided that $\|\mu(\rho_0)-1\|_{L^\infty}\leq \varepsilon_4\le \min\{\varepsilon_{41},(2C_4)^{-1}\}$. 
 
For $\mathbf{Q}_2(x)=-\nabla\Delta^{-1}\mathbb{P}(\rho\dot u)$, 
 the Sobolev embedding implies that
\begin{equation}\label{C419}
\begin{split}
|\mathbf{Q}_2(x)|&\leq C\left(\|\mathbf{Q}_2\|_{L^3}+\|\nabla{\mathbf{Q}}_2\|_{L^3}\right)\\
&\le C\left(\big\|\nabla^2\Delta^{-1}\mathbb{P}(\rho\dot u)\big\|_{L^{6/5}}+\big\|\nabla^2\Delta^{-1}\mathbb{P}(\rho\dot u)\big\|_{L^3}\right)\\
&\leq C\left(\|\rho\dot u\|_{L^{6/5}}+\|\rho\dot u\|_{L^3}\right),
\end{split}
\end{equation}
since the operator ``$\nabla^2\Delta^{-1}\mathbb{P}$" is bounded in $L^p$ $(1<p<\infty)$  (see \cite[Chapter 1]{S}). 

Putting \eqref{Q1}--\eqref{C419} into \eqref{C422}, then we arrive at that if $\|\mu(\rho_0)-1\|_{L^\infty}\leq\varepsilon_4$,
\begin{equation}\label{C423}
\begin{split}
\|\nabla u\|_{L^\infty}
&\leq  C\,\lambda^{1/3}\left(\|\nabla\rho\|_{L^4}+1\right)\left(\|\nabla u\|_{L^2}+\|\rho\dot u\|_{L^2}+\|\rho\dot u\|_{L^3}\right)\\
&\quad+C\,\varepsilon_4\,\lambda^{-1}\|\nabla u\|_{L^2}+C\left(\|\rho\dot u\|_{L^{6/5}}+\|\rho\dot u\|_{L^3}\right)\\
&\leq C\left(\varepsilon_4^{1/4}\,\|\nabla\rho\|_{L^4}^{3/4}+1\right)\left(\|\nabla u\|_{L^2}+\|\sqrt\rho\dot u\|_{L^2}+\|\rho\dot u\|_{L^3}\right),
\end{split}
\end{equation}
by using \eqref{3.7} and choosing $\lambda=\lambda(t)=\varepsilon_4^{3/4}\left(1+\|\nabla\rho\|_{L^4}\right)^{-3/4}$. 

Combining \eqref{3.39} and \eqref{C423}, we therefore argue that
\begin{equation}\label{C424}
\frac{d}{dt}\|\nabla\rho\|_{L^4}
\leq C_5\left(\varepsilon^{1/4}\,\|\nabla\rho\|_{L^4}^{7/4}+\|\nabla\rho\|_{L^4}\right)\left(\|\nabla u\|_{L^2}+\|\sqrt\rho\dot u\|_{L^2}+\|\rho\dot u\|_{L^3}\right).
\end{equation}
Notice that \eqref{2.1} as well as \eqref{3.7}, \eqref{k38}, and \eqref{k37} indicates for any $p\in[2,\infty)$,
\begin{equation}\begin{aligned}\label{3.23}
\|\rho\dot u\|_{L^p} 
&\le C(p)\left(N(1+t)\right)^3\left(1+\|\rho\,(1+|x|^2)\|_{L^1}\right) \left(\|\sqrt\rho \dot u\|_{L^2}+\|\nabla\dot u\|_{L^2}\right)\\
&\le C(p)\left(1+t\right)^5\left(\|\sqrt\rho \dot u\|_{L^2}+\|\nabla\dot u\|_{L^2}\right).
\end{aligned}\end{equation}
Hence, it follows from \eqref{3.8}, \eqref{3.9},   \eqref{3.10}, \eqref{3.7}, and \eqref{3.23} that
\begin{equation}\begin{aligned}\label{3.22}
&\int_0^{\delta(T)}\left(\|\nabla u\|_{L^2}+\|\sqrt\rho\dot u\|_{L^2}+\|\rho\dot u\|_{L^3}\right)\,dt\\
&\leq C\int_0^{\delta(T)}\left(\|\nabla u\|_{L^2}+\|\sqrt\rho\dot u\|_{L^2}+\|\rho\dot u\|_{L^2}^{1/2}\,\left(t^{1/2}\|\rho\dot u\|_{L^{6}}\right)^{1/2}\,t^{-1/4}\right)\,dt\\
&\leq C\int_0^{\delta(T)}\left(1+t^{-1/2}+\|\nabla u\|_{L^2}^2+\|\sqrt{\rho}\dot u\|_{L^2}^2+t\|\nabla\dot u\|_{L^2}^2\right)\,dt\leq C_6.
\end{aligned}\end{equation}
Moreover, we deduce from \eqref{3.11} and \eqref{3.23} that  if $\|\mu(\rho_0)-1\|_{L^\infty}\leq\varepsilon_{3}$, 
\begin{equation}\begin{aligned}\label{3.24}
&\int_{\delta(T)}^T\left(\|\nabla u\|_{L^2}+\|\sqrt\rho\dot u\|_{L^2}+\|\rho\dot u\|_{L^3}\right)\,dt\\
&\le C\int_{\delta(T)}^T(1+t)^5\left(\|\nabla u\|_{L^2}+\|\sqrt\rho\dot u\|_{L^2}+\|\nabla\dot u\|_{L^2}\right)\,dt\\
&\leq C \int_{\delta(T)}^T\left(t^{10}e^{-\sigma t}+ e^{\sigma t}\big(\|\nabla u\|_{L^2}^2+\|\sqrt\rho\dot u\|_{L^2}^2+\|\nabla\dot u\|_{L^2}^2\big)\right)\,dt  \leq C_7.
\end{aligned}\end{equation}
Finally, applying Gronwall's equality to \eqref{C424}  and using \eqref{3.22}, \eqref{3.24}, we  obtain that
$$\sup_{0\leq t\leq T}\|\nabla\rho\|_{L^4}\leq 3e^{C_5(C_6+C_7)}\|\nabla\rho_0\|_{L^4},$$
by supposing $\|\mu(\rho_0)-1\|_{L^\infty}\leq\varepsilon_{4}\triangleq \min\{6^{-3}e^{-7 C_5(C_6+C_7)}\|\nabla\rho_0\|_{L^4}^{-3},\,\varepsilon_3,\,\varepsilon_{41},\,(2C_4)^{-1}\}$, which along with \eqref{C423}, \eqref{3.22}, \eqref{3.24}, \eqref{3.39}, and \eqref{3.7} concludes that
\begin{equation}\label{nr}\notag
\sup_{0\leq t\leq T}\|\rho\|_{H^1\cap W^{1,q}}+\int_0^T\|\nabla u\|_{L^\infty}\,dt\leq C.
\end{equation}
The proof of Lemma \ref{le6} is therefore finished.
\end{proof}

Then the regularity assertions \eqref{1.7}--\eqref{1.8} follows from standard elliptic estimates and interpolations.

\begin{lemma}\label{le7}
Under the conditions of Theorem \ref{T1},suppose that
$$\|\mu(\rho_0)-1\|_{L^\infty}\leq\tilde{\varepsilon},$$
with $\tilde\varepsilon$ $(\le \varepsilon_4)$  depending on $\bar\rho$, $\mu$, $q$, and the initial values. There is a positive constant $C$ determined by $\bar\rho$, $\mu$, $q$, 
and the initial values such that 
\begin{equation}\begin{aligned}\label{3.288}
&\sup_{t\in[0,T]}\delta(t)\, e^{\sigma t}\left(\|\nabla u\|_{H^1}^2+\| P\|_{H^1}^2+\|\sqrt\rho  u_t\|_{L^2}^2\right)\\
&+\int_0^Te^{\sigma t}\left(\|\nabla u\|_{H^1}^2+\|P\|_{H^1}^2+\|\sqrt\rho  u_t\|_{L^2}^2\right)\,dt\\
&+\int_0^T \delta(t)e^{\sigma t/2}\left(\|\nabla  u\|_{W^{1,q}}^2+\| P\|_{W^{1,q}}^2+\|\nabla u_t\|_{L^2}^2\right)dt\leq C.
\end{aligned}\end{equation}
In particular, the above estimates imply that
\begin{equation}\label{CC501}
\|u-u_\infty\|_{L^5}+|u_\infty|+\|u\|_{L^\infty}\leq Ce^{-\sigma t},~~\forall t\in[1,\infty).
\end{equation}
\end{lemma}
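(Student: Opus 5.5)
We will prove Lemma \ref{le7} in three stages: elliptic control of $\nabla^2u$ and $\nabla P$ at each time, weighted-in-time bounds on $\sqrt\rho u_t$ and $\nabla u_t$, and finally the pointwise decay \eqref{CC501} via Theorem \ref{T3}.

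\emph{Elliptic step.} By Lemma \ref{pl3}(2) with $f=-\rho\dot u$, if $\|\mu(\rho_0)-1\|_{L^\infty}\le\tilde\varepsilon$ we have, for $p\in\{2,q\}$,
\[
\|\nabla^2u\|_{L^p}+\|\nabla P\|_{L^p}\le C\big(\|\rho\dot u\|_{L^p}+\|\nabla\rho\|_{L^{q}}\|\nabla u\|_{L^{pq/(q-p)}}\big).
\]
For $p=2$ we estimate $\|\nabla\rho\|_{L^q}$ by Lemma \ref{le6}. We interpolate $\|\nabla u\|_{L^{2q/(q-2)}}$ via Gagliardo--Nirenberg between $\|\nabla u\|_{L^2}$ and $\|\nabla^2u\|_{L^2}$, and absorb the $\nabla^2 u$ part by Young's inequality. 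This gives $\|\nabla u\|_{H^1}\le C(\|\sqrt\rho\dot u\|_{L^2}+\|\nabla u\|_{L^2})$. For $P$ we use \eqref{C225}: $\|P\|_{L^2}\le C(\|\rho\dot u\|_{L^1}+\|\nabla u\|_{L^2})$ fails at the endpoint, so instead we bound $\|\Delta^{-1}\mathrm{div}(\rho\dot u)\|_{L^2}\le C\|\rho\dot u\|_{L^1}+\ldots$ through the Hardy space. Since $\int\rho\dot u\,dx=0$ (Lemma \ref{LL33}) and Lemma \ref{L32} gives $\rho\dot u\in\mathcal H^1$ with norm $\le C\|\sqrt\rho\dot u\|_{L^2}$, the Riesz-type operator $\nabla\Delta^{-1}$ maps $\mathcal H^1$ into $L^2$ (Sobolev on $\mathcal H^1$, as in \eqref{C303}). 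Hence $\|P\|_{L^2}\le C(\|\sqrt\rho\dot u\|_{L^2}+\|\nabla u\|_{L^2})$, using the uniform bound on $\|\rho|x|^2\|_{L^1}$ from \eqref{3.11}. Multiplying these estimates by $\delta e^{\sigma t}$ or $e^{\sigma t}$ and invoking \eqref{3.11} gives the $H^1$ parts of \eqref{3.288}.

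\emph{Time derivatives and the $W^{1,q}$ norm.} We write $u_t=\dot u-u\cdot\nabla u$, so
\[
\|\sqrt\rho u_t\|_{L^2}\le\|\sqrt\rho\dot u\|_{L^2}+\|\sqrt\rho u\|_{L^{4}}\|\nabla u\|_{L^4}
\]
and $\nabla u_t=\nabla\dot u-\nabla(u\cdot\nabla u)$. Both $\|\sqrt\rho u\|_{L^4}$ and $\|\rho\dot u\|_{L^q}$ are handled by \eqref{2.1} together with \eqref{k38}. The unbounded factor $u$ in $\nabla(u\cdot\nabla u)$ is treated by splitting $u=u_\infty+v$: Theorem \ref{T3} gives $|u_\infty|+\|v\|_{L^{4/\beta}}\le C\|\sqrt\rho\dot u\|_{L^2}$, since the moment $M(t)$ is uniformly bounded. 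For $p=q$ in the elliptic estimate, $\|\rho\dot u\|_{L^q}\le C(1+t)^5(\|\sqrt\rho\dot u\|_{L^2}+\|\nabla\dot u\|_{L^2})$ by \eqref{3.23}. The polynomial factor $(1+t)^{10}$ is absorbed by sacrificing half of the exponential weight, which is why \eqref{3.288} carries $e^{\sigma t/2}$. Integrating with $\delta(t)$ against \eqref{3.11} completes \eqref{3.288}. \emph{This step is the main obstacle}: the polynomial-in-$t$ constants from \eqref{2.1} and the unbounded $u$ in $u_t$ must be beaten by the exponential weights. The $u_\infty$ splitting is the device we would try first, falling back on weighted estimates of $\rho|u|^2$ if it proves insufficient.

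\emph{Decay \eqref{CC501}.} For $t\ge1$, Theorem \ref{T3} with $\beta=4/5$ gives
\[
\|u-u_\infty\|_{L^5}+|u_\infty|\le C\|\nabla u(1+|x|)^{2/5}\|_{L^2}\le C\|\sqrt\rho\dot u\|_{L^2}\le Ce^{-\sigma t/2}
\]
by \eqref{3.11}. Then Gagliardo--Nirenberg gives $\|v\|_{L^\infty}\le C\|v\|_{L^5}^{\theta}\|\nabla v\|_{L^{q}}^{1-\theta}$, and $\|\nabla u\|_{L^q}\le C\|\nabla u\|_{H^1}$ decays exponentially by the first stage. Adding $|u_\infty|$ yields $\|u\|_{L^\infty}\le Ce^{-\sigma t}$ after renaming $\sigma$.
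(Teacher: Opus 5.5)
Your plan follows the paper's route. You bound $\|P\|_{L^2}$ through $\|\rho\dot u\|_{\mathcal{H}^1(\R)}$ and use the elliptic estimates of Lemma \ref{pl3}. You write $u_t=\dot u-u\cdot\nabla u$ with $u=u_\infty+v$ from Theorem \ref{T3}, and you read off decay from \eqref{3.11}. Two of your variants are fine: handling $\|\sqrt\rho u_t\|_{L^2}$ through $\|\sqrt\rho u\|_{L^4}$ and \eqref{2.1}, and taking $|u_\infty|$ from \eqref{CC100} (with $\|\nabla u\|_{L^2}$ kept on the right) rather than from momentum conservation.

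The step that is not closed is the term $u\cdot\nabla^2u$ in $\nabla u_t$, and you have placed the difficulty in the wrong spot. The polynomial factors at large $t$ are harmless. The real problem is small time under the weight $\delta(t)$. Suppose you use only what you list from Theorem \ref{T3}, namely $|u_\infty|$ and $\|v\|_{L^{4/\beta}}$. Then H\"older forces $\nabla^2u\in L^{4/(2-\beta)}$, so you need $\|\rho\dot u\|_{L^r}$ with $r>2$. By \eqref{3.23} and interpolation this brings in a positive power of $\|\nabla\dot u\|_{L^2}$. Near $t=0$ you then face $t\,\|\sqrt\rho\dot u\|_{L^2}^{2+a}\|\nabla\dot u\|_{L^2}^{b}$ with $a+b=2$, $b>0$. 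The available bounds are $\sup t\|\sqrt\rho\dot u\|_{L^2}^2$, $\int\|\sqrt\rho\dot u\|_{L^2}^2\,dt$ and $\int t\|\nabla\dot u\|_{L^2}^2\,dt$, and they do not control this product. Your fallback, weighted bounds on $\rho|u|^2$, cannot help either. The term $u\cdot\nabla^2u$ carries no density factor, so $u$ must be controlled in the vacuum region.

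The missing ingredient is a bound $\|u\|_{L^\infty}\le C(\|\sqrt\rho\dot u\|_{L^2}+\|\nabla u\|_{L^2})$ valid for every $t>0$ and free of $\nabla\dot u$; this is \eqref{CC407} in the paper. You already have it. Your last paragraph uses the Gagliardo--Nirenberg inequality $\|v\|_{L^\infty}\le C\|v\|_{L^5}^{\theta}\|\nabla u\|_{L^4}^{1-\theta}$ together with $\|\nabla u\|_{L^4}\le C\|\nabla u\|_{H^1}\le C(\|\sqrt\rho\dot u\|_{L^2}+\|\nabla u\|_{L^2})$. That argument holds at all times, not only for $t\ge1$. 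Using it before the time-derivative step gives $\|u\,\nabla^2u\|_{L^2}\le C(\|\sqrt\rho\dot u\|_{L^2}+\|\nabla u\|_{L^2})^2$. Then $\delta(t)\|\sqrt\rho\dot u\|_{L^2}^4\le C\|\sqrt\rho\dot u\|_{L^2}^2$, and $\int_0^T\delta e^{\sigma t/2}\|\nabla u_t\|_{L^2}^2\,dt$ is finite by \eqref{3.11}. With that reordering your argument coincides with the paper's.
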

\begin{proof}
Observe that \eqref{C202}, \eqref{C339}, \eqref{1.5},  \eqref{C225}, and \eqref{3.11} give 
\begin{equation}\label{pe}\begin{aligned}
\|P\|_{L^2}
&\le C\|\nabla\Delta^{-1}\mathrm{div}(\rho\dot u)\|_{L^1}+\|\Delta^{-1}\mathrm{div}\mathrm{div}(2\mu(\rho)Du)\|_{L^2}\\
&\le C\left(\|\rho\dot u\|_{\mathcal{H}^1(\R)}+\|\nabla u\|_{L^2}\right)\\
&\le C\left(\|\sqrt\rho\dot u\|_{L^2}+\|\nabla u\|_{L^2}\right).
\end{aligned}\end{equation}
Applying Lemma \ref{pl3}, \eqref{pe}, \eqref{3.19}, \eqref{C423}, and \eqref{3.23} shows that there is a small positive constant $\tilde\varepsilon\,(\le \varepsilon_4)$ such that if $\|\mu(\rho_0)-1\|_{L^\infty}\leq\tilde\varepsilon,$ 
\begin{equation}\label{CC408}
\begin{split}
&\|\nabla u\|_{H^1}+\|P\|_{H^1}\le C\left(\|\sqrt\rho\dot u\|_{L^2}+\|\nabla u\|_{L^2}\right),\\
&\|\nabla u\|_{W^{1,q}}+\|P\|_{W^{1,q}}\le C(1+t)^5\left(\|\sqrt\rho \dot u\|_{L^2}+\|\nabla\dot u\|_{L^2}+\|\nabla u\|_{L^2}\right).
\end{split}
\end{equation}
Therefore, from above estimates and \eqref{3.11}, we get
\begin{equation}\begin{aligned}\label{3.40}
&\sup_{t\in[0,T]}\delta(t)\,e^{\sigma t}\left(\|\nabla u\|_{H^1}^2+\|P\|_{H^1}^2\right)+\int_0^Te^{\sigma t}\left(\|\nabla u\|_{H^1}^2+\|P\|_{H^1}^2\right)\,dt\\
&+\int_0^T\delta(t)\, e^{\sigma t/2}\left(\|\nabla u\|_{W^{1,q}}^2+\|P\|_{W^{1,q}}^2\right)\,dt\leq C.
\end{aligned}\end{equation}

To estimate the time derivatives, we recall \eqref{3.7}, \eqref{3.11}, and \eqref{C425} to deduce that there is a constant 
 $u_\infty(t)$ decided by $u$ such that
\begin{equation}\label{CC405}
\|u-u_\infty(t)\|_{L^{5}}\leq C\||\nabla u|\cdot|x|^{2/5}\|_{L^2}
\leq C\|\sqrt{\rho}\dot{u}\|_{L^2}.
\end{equation}
Meanwhile, \eqref{CC402} and \eqref{CC405} also indicates that
\begin{equation*}
\begin{split}
\int_\R\rho|u_\infty(t)|^2\,dx&=\int_\R\rho|u-u_\infty(t)|^2\,dx-\int_\R\rho|u|^2\,dx,\\
&\leq C(\|\sqrt{\rho}u\|_{L^2}^2+\|\sqrt{\rho}\dot{u}\|_{L^2}^2),
\end{split}
\end{equation*}
which along with the conservation of mass $\int_\R\rho\,dx=\int_\R\rho_0\,dx$ gives
\begin{equation}\label{CC406}
|u_\infty(t)|\leq C(\|\sqrt{\rho}u\|_{L^2}+\|\sqrt{\rho}\dot{u}\|_{L^2}).  
\end{equation}
Therefore, the sobolev embedding, \eqref{CC408}, and \eqref{CC406} imply that
\begin{equation}\label{CC407}
\begin{split}
\|u\|_{L^\infty}&\leq |u_\infty(t)|+\|u-u_\infty(t)\|_{L^{\infty}}\\
&\leq|u_\infty(t)|+\|u-u_\infty(t)\|_{L^{16}}+\|\nabla u\|_{H^1}\\
&\leq C(\|\sqrt{\rho}u\|_{L^2}+\|\sqrt{\rho}\dot{u}\|_{L^2}).
\end{split}
\end{equation}
Now let us make use of \eqref{3.11}, \eqref{3.40}, and \eqref{CC407} to check that
\begin{equation*}\begin{aligned}
\|\sqrt\rho u_t\|_{L^2}
&\le \|\sqrt\rho \dot u\|_{L^2}+\|\sqrt\rho u\cdot\nabla u\|_{L^2}\\
&\le \|\sqrt\rho \dot u\|_{L^2}+\|\rho\|_{L^{\infty}}^{1/2}\|u\|_{L^{\infty}}\|\nabla u\|_{L^{2}}\\
&\le C\left(\|\nabla u\|_{L^{2}}+\|\sqrt\rho \dot u\|_{L^2}\right).
\end{aligned}\end{equation*}
Similarly, we also have
\begin{equation*}\begin{aligned}
\|\nabla u_t\|_{L^2}
&\le \|\nabla \dot u\|_{L^2}+\|\nabla u\|_{L^4}^2+\||u|\,|\nabla^2 u|\|_{L^2}\\
&\le \|\nabla \dot u\|_{L^2}+C\,\|\nabla u\|_{L^{2}}\|\nabla^2 u\|_{L^{2}}+C\,\|u\|_{L^\infty}\|\nabla^2 u\|_{L^2}\\
&\le \|\nabla \dot u\|_{L^2}+C\left(\|\nabla u\|_{L^{2}}+\|\sqrt\rho \dot u\|_{L^2}\right)^2.
\end{aligned}\end{equation*}
The above two estimates together with \eqref{3.11} ensure that
\begin{equation}\begin{aligned}\label{3.43}\notag
\sup_{t\in[0,T]}\delta (t)\,e^{\sigma t}\|\sqrt\rho  u_t\|_{L^2}^2+\int_0^T\left(e^{\sigma t}\|\sqrt\rho u_t\|_{L^2}^2+\delta(t)e^{\sigma t/2}\|\nabla u_t\|_{L^2}^2\right)\,dt\le C,
\end{aligned}\end{equation}
which combined with \eqref{3.40} gives \eqref{3.288}. Collecting \eqref{3.11} and \eqref{CC405}--\eqref{CC407} also implies \eqref{CC501}. The proof of Lemma \ref{le7} is therefore finished.
\end{proof}

\begin{remark}\label{R43}
For the case $\mu(\rho)\equiv 1$ in Theorem \ref{T2}, we can control the $L^1(0,T;L^\infty)$-norm of $\nabla u$ and obtain Lemma \ref{le7} from the elliptic estimates (see \cite[Chapter 9]{GT}) of the Stokes system with constant coefficient in $\R$:
\begin{equation}\label{C427}
\begin{cases}
-\mu_0\Delta u+\nabla P=-\rho\dot u,\\
\mathrm{div}u=0,
\end{cases}
\end{equation}
The Gagliardo-Nirenberg inequality and $W^{2,p}$ estimates of \eqref{C427} imply that
\begin{equation*}
\begin{split}
\|\nabla u\|_{L^\infty}\leq C\,\|\nabla u\|_{L^2}^{1/4}\|\nabla^2u\|_{L^3}^{3/4}\leq C\,\|\nabla u\|_{L^2}^{1/4}\|\rho\dot u\|_{L^3}^{3/4},\\
\end{split}
\end{equation*}
which along with  \eqref{3.22} and \eqref{3.24}  gives the uniform estimate on $L^1(0,T;L^\infty)$-norm of $\nabla u$. While the proof of Lemma \ref{le7} remains the same line.
\end{remark}

\section{Proof of main Theorems}\label{S5}
\quad With all a priori estimates in Section  \ref{S4} at hand, we can prove the main Theorems \ref{T1} and \ref{T2} of this paper.

{\it Proof of Theorem \ref{T1}:} 
Under the conditions of Theorem \ref{T1}, we assume further that  
\begin{equation*}
\|\mu(\rho_0)-1\|_{L^\infty}\leq\tilde\varepsilon,
\end{equation*}
with $\tilde\varepsilon$ as in Lemma \ref{le7}. Then
according to Lemma \ref{L41}, there exists constant  $T_0>0$ such that the system \eqref{C101}--\eqref{1.2} admits a unique strong solution $(\rho, u, P)$ on $\R\times(0, T_0]$, which satisfies \eqref{1.7}. 

Let $T^*$ be the largest existence time of the unique solution $(\rho,u,P)$, namely
\begin{equation*}
T^*\triangleq \sup\big\{T>0\,\big|\,(\rho,u,P)
~\mathrm{solves}~\mbox{\eqref{C101}--\eqref{1.2}}~\mathrm{in}~\R\times[0,T]~\mathrm{and \, satisfies}~\eqref{1.7}\big\}.
\end{equation*}
Note that $T^*$ is well defined, since $T_0$ belongs to the set given above.
Consequently, for any $0< \tau<T< T^{*}$, one deduces from \eqref{1.7} that 
\begin{equation}\label{C501}
\rho u,\,\nabla u\in L^{\infty}(\tau,T;H^1)\cap H^1(\tau,T;L^2) \subset C([\tau,T];L^2\cap L^q).
\end{equation} 
In addition, according to \cite[Lemma 2.3]{lions}, the transport equation \eqref{C101}$_1$ combined with \eqref{3.19} is sufficient to guarantee that
\begin{equation}\label{C502}
\rho \in C([0,T]; H^1\cap W^{1,q}).
\end{equation}

Now, we claim
$T^*=\infty$. 
Otherwise, if $T^*<\infty$, we can utilize Lemmas \ref{le1} and \ref{le5}--\ref{le7} along with \eqref{C501}--\eqref{C502} to define
$$(\rho^*,u^*)(x)\triangleq(\rho, u)(x,T^*)=\lim_{t\rightarrow T^*}(\rho, u)(x,t),$$
which satisfies the initial conditions \eqref{1.3}, that is,
\begin{equation}\notag 
\begin{split}
0\leq\rho^*\leq \bar\rho,\quad \rho^*\,(1+|x|^2)\in L^1,\quad \rho^\ast\in H^1\cap W^{1,q}, \quad \sqrt{\rho^*}u^*\in L^2,\quad u^*\in D^1_{0,0}.
\end{split}
\end{equation}  
Therefore, one can take $(\rho^*,\,\rho^*u^*)$ as the initial data and apply
Lemma \ref{L41} to extend $(\rho,u,P)$ beyond $T^*$ uniquely,
which contradicts the definition of $T^*$. 
Thus we must have $T^*=\infty$, moreover the uniform bounds \eqref{C102} and exponential decay \eqref{1.8}--\eqref{CC96} follow from Lemmas \ref{le5}--\ref{le7}. The proof of Theorem \ref{T1} is completed.  
\thatsall
\\
\vspace{0.1 cm}

{\it Proof of Theorem \ref{T2}:} When $\mu(\rho)\equiv 1$, the global well-posedness of weak solutions to the system \eqref{C101}--\eqref{1.2} follows from \cite{DM2,HSWZ}, hence we concentrate on the large time behaviors \eqref{C105}--\eqref{CC94}.

The cancellation condition $\rho\dot u$ and conservation of momentum remain true according to Lemmas \ref{LL33} and \ref{LL34}.
Consequently, the a priori estimates and exponential decay in Lemmas \ref{le1}--\ref{le5} and \ref{le7} are still valid as mentioned in Remark \ref{R43}, and then the large time behaviors \eqref{C105}--\eqref{CC95} are valid.  

Let us illustrate the uniform-in-time regularity of flow map. We still write the momentum equation \eqref{C101}$_2$  as a Stokes system with constant coefficient in $\R$:
\begin{equation}\label{C503}
\begin{cases}
-\mu_0\Delta u+\nabla P=-\rho\dot u,\\
\mathrm{div}u=0.
\end{cases}
\end{equation}
Applying the $W^{2,p}$ estimates (see Lemma \ref{pl3} and \cite[Chapter 9]{GT}) to \eqref{C503}, Sobolev  inequalities, \eqref{C105}, \eqref{C425}, and \eqref{CC407} yields that for $\beta=1/4$ and $0<\gamma<1$, there is a  positive constant $C$ depending only on $\gamma$ such that
\begin{equation*}
\begin{split}
\|u\|_{L^\infty}&\leq C\big(|u_\infty(t)|+\|u-u_\infty\|_{L^{4/\beta}}+\|\nabla u\|_{L^3}\big)\leq C\big(\|\nabla u\|_{L^2}+\|\sqrt{\rho}\dot u\|_{L^2}\big), \\
\|\nabla u\|_{C^\gamma}&\leq C\,\|\nabla u\|_{W^{1,2/(1-\gamma)}}\leq C\big(\|\nabla u\|_{L^2}+\|\sqrt{\rho}\dot u\|_{L^2}+\|\rho\dot u\|_{L^{2/(1-\gamma)}}\big),
\end{split}
\end{equation*}
which along with similar estimates in \eqref{3.22} and \eqref{3.24} ensure us to find a constant $C$ determined by $\gamma$, $\bar\rho$, $\|\rho_0\|_{L^1}$, $\|\rho_0\,|x|^2\|_{L^1}$, $\|\sqrt{\rho_0}u_0\|_{L^2}$, and $\|\nabla u_0\|_{L^2}$ such that for any $T\in(0,\infty),$
\begin{equation}\label{hn}
\int_0^T\left(\|u\|_{L^\infty}+\|\nabla u\|_{C^\gamma}\right)\,dt\le C.
\end{equation}
This uniform estimate  guarantees that the flow map in \eqref{C109} is well defined. Moreover, using the flow map and Gronwall's inequality,  
we obtain that for any  $T\in (0,\infty)$, 
\begin{equation}\notag
\begin{split}
\exp\left\{-\int_0^T\|\nabla u\|_{L^\infty}\,dt\right\}\leq\frac{|X_T(x)-X_T(y)|}{|x-y|}\leq \exp\left\{\int_0^T\|\nabla u\|_{L^\infty}\,dt\right\},
\end{split}
\end{equation}
\begin{equation}\notag
\begin{split}
\|\nabla_xX_T(x)\|_{C^{\gamma}}
&\leq \|\nabla_xX_0(x)\|_{C^{\gamma}}\exp\left\{\int_0^T\|\nabla u\|_{L^\infty}\,dt\right\}\\
&\quad+\int_0^T\|\nabla u\|_{C^{\gamma}}\|\nabla_xX_t\|_{L^\infty}^{1+\gamma}\,\exp\left\{\int_t^T\|\nabla u\|_{L^\infty}\,ds\right\}\,dt,
\end{split}
\end{equation} 
which along with \eqref{hn} lead to \eqref{C107}--\eqref{C118}.

In particular, \eqref{CC95} ensures that 
$$X_\infty(x)=
\lim_{t\rightarrow\infty}X_t(x)=x+\lim_{t\rightarrow\infty}
\int_0^tu(X_s(x),s)\,ds,$$
is well defined, and \eqref{C107} along with $\mathrm{div}u=0$ also implies that $X_\infty$ is a measure preserving $C^{1,\gamma}$ diffeomorphism. Now it is direct to check \eqref{CC94} that
\begin{equation*}
\|X_t(\cdot,t)-X_\infty(\cdot)\|_{L^\infty}
\leq\int_t^{\infty}\|u\|_{L^\infty}\,ds\leq Ce^{-\sigma t}.
\end{equation*}
The proof of Theorem \ref{T2} is therefore complete.
\thatsall

\section*{Statements and Declarations}
{\bf Conflict of interest statement}
On behalf of all authors, the corresponding author states that there is no conflict of interest. \\
{\bf  Data availability statement} 
No data were used for this research.

\section*{Acknowledgements} 
Xinyu Fan is supported by "the Fundamental Research Funds for the Central Universities", Nankai University. Boqiang L\"{u} is supported by the National Natural Science Foundation of China (No.  12371219), the Double-Thousand Plan of Jiangxi Province (No. jxsq2023201115);

\begin {thebibliography} {99}

\bibitem{AGZ2}
H. Abidi, G. Gui, P. Zhang. 
On the decay and stability of global solutions to the 3D inhomogeneous Navier-Stokes equations.
{\it Commun. Pure Appl. Math.}, {\bf 64} (2011), 832-881.

\bibitem{AGZ}
H. Abidi, G. Gui, P. Zhang. 
On the well-posedness of 3-D inhomogeneous Navier-Stokes equations in the critical spaces. {\it Arch. Ration. Mech. Anal.}, {\bf 204} (2012), 189-230.

\bibitem{AP}
H. Abidi, M. Paicu. Existence globale pour un fluide inhomog\'{e}ne. {\it Ann. Inst. Fourier (Grenoble)}, {\bf 57} (2007), 883-917. 

\bibitem{BP}
A. Bertozzi, P. Constantin. 
Global regularity for vortex patches. 
{\it Comm. Math. Phys.}, {\bf 152} (1993), 19-28.

\bibitem{ckn}
L. Caffarelli, R. Kohn, L. Nirenberg. First order interpolation inequality with weights. {\it Compos. Math.}, \textbf{53} (1984), 259-275.

\bibitem{cw}
F. Catrina, Z. Wang. On the Caffarelli-Kohn-Nirenberg inequalities: sharp constants, existence (and nonexistence), and symmetry of extremal functions. {\it Comm. Pure Appl. Math.}, \textbf{54} (2001), 229-258.

\bibitem{C}
J. Chemin. 
Persistance de structures g\'{e}om\'{e}triques dans les fluides incompressibles bidimensionnels. 
{\it Ann. Sci. \'{E}cole Norm. Sup.}, {\bf 26}(4) (1993), 517-542.  

\bibitem{clms}
R. Coifman, P.L. Lions, Y. Meyer, S. Semmes. Compensated compactness and Hardy
space. {\it J. Math. Pures Appl.},  
{\bf 72}(9) (1993),  247-286. 

\bibitem{CHZ}
W. Craig, X. Huang, Y. Wang.
Global strong solutions for 3D nonhomogeneous
incompressible Navier-Stokes equations. 
{\it J. Math. Fluid Mech.}, {\bf 15} (2013), 747-758.  

\bibitem{CSV}
T. Crin-Barat, S. Skondri\'{c}, A. Violini. Relative energy method for weak-strong uniqueness of the inhomogeneous Navier-Stokes equations far from vacuum. 
{\it J. Evol. Equ.}, {\bf 25}(1) (2025), No.6.

\bibitem{Dan}
R. Danchin. Density-dependent incompressible viscous fluids in critical spaces. 
{\it Proc. R. Soc. Edinb. Sect. A}, {\bf 133} (2003), 1311-1334. 
 
\bibitem{Dan2}
R. Danchin. 
Global well-posedness for two-dimensional inhomogeneous viscous flows with rough data via dynamic
interpolation. 
{\it Anal. PDE}, {\bf 18} (2025), 1231-1270.
 
\bibitem{DM}
R. Danchin, P. Mucha.
Compressible Navier-Stokes equations with ripped density.
{\it Comm. Pure Appl. Math.},
{\bf 76}(11) (2023), 3437-3492.

\bibitem{DM2}
R. Danchin, P. Mucha.
The incompressible Navier-Stokes equations in vacuum.
{\it Comm. Pure Appl. Math.},
{\bf 72}(7) (2019), 1351-1385.
 
\bibitem{DMP}
R. Danchin, P. Mucha, T. Piasecki.
Stability of the density patches problem with vacuum
for incompressible inhomogeneous viscous flows.
{\it Ann. Inst. H. Poincar\'{e}
Anal. Non Lin\'{e}aire},
{\bf 41} (2024), 897-931.

\bibitem{DW}
R. Danchin, S. Wang. Global unique solutions for the inhomogeneous Navier-Stokes equations with only bounded density,
in critical regularity spaces. 
{\it Comm. Math. Phys.}, {\bf 399} (2003), 1647-1688.

\bibitem{des}
B. Desjardins. Regularity results for two-dimensional flows of multiphase viscous fluids. {\it Arch. Ration. Mech. Anal.}, 
{\bf 137} (1997),  135-158.

\bibitem{FK}
H. Fujita, T. Kato. On the Navier-Stokes initial value problem. I.
{\it Arch. Ration. Mech.
Anal.}, {\bf 16} (1964), 269-315.
 
\bibitem{GGL}
F. Gancedo, E. Garc\'{i}a-Ju\'{a}rez, P. Luna-Velasco.
On 2D Navier-Stokes free boundary: non-negative density and small viscosity contrast.   arXiv: 2507.09333v2.  
 
\bibitem{GT}
D. Gilbarg, N. Trudinger.
 {\it Elliptic partial differential equations of second order.} Second edition,
Springer-Verlag, 1983.

\bibitem{H}
P. Han.
Decay results of the non-stationary
Navier-Stokes flows in half-spaces.
{\it Arch. Ration. Mech. Anal.}, {\bf 230} (2018), 977-1015.

\bibitem{HSWZ2}
T. Hao, F. Shao, D. Wei, Z. Zhang.
Global well-posedness of inhomogeneous Navier-Stokes equations with bounded density.
{\it Int. Math. Res. Not.},
{\bf 18} (2025).

\bibitem{HSWZ}
T. Hao, F. Shao, D. Wei, Z. Zhang.
On the density patch problem for the 2-D
inhomogeneous Navier-Stokes equations.
{\it Sci. China Math.}, {\bf 69} (2026).

\bibitem{hll}
C. He, J. Li, B. L\"{u}. Global well-posedness and exponential stability of 3D Navier-Stokes equations with density-dependent viscosity and vacuum in unbounded domains. {\it Arch. Ration. Mech. Anal.}, {\bf 239}(3) (2021), 1809-1835.

\bibitem{HPZ}
J. Huang, M. Paicu, P. Zhang.
Global well-posedness to incompressible inhomogeneous fluid system
with bounded density and non-Lipschitz velocity. {\it Arch. Ration. Mech. Anal.}, {\bf 209} (2013), 631-682.

\bibitem{hw1}
X. Huang, Y. Wang. 
Global strong solution with vacuum to the two-dimensional
density-dependent Navier-Stokes system. 
{\it SIAM J. Math. Appl.}, {\bf 46} (2014), 1771-1788.

\bibitem{K}
A. Kazhikhov. Solvability of the initial-boundary value problem for the equations of the motion of an inhomogeneous viscous incompressible fluid. {\it Dokl. Akad. Nauk SSSR}, {\bf 216} (1974), 1008-1010.

\bibitem{LS}
O. Ladyzhenskaya, V. Solonnikov. Unique solvability of an initial and boundary value problem for viscous incompressible inhomogeneous fluids. {\it J. Sov. Math.}, {\bf 9}(5)  (1978), 697-749.

\bibitem{lx}
J. Li, Z. Xin. Global well-posedness and large time asymptotic behavior of classical solutions to the compressible Navier-Stokes equations with vacuum. {\it Ann. PDE}, {\bf 5}(1) (2019),  No.7.

\bibitem{LZ}
X. Liao, P. Zhang.
Global regularity of 2D density patches for viscous inhomogeneous incompressible flow with  general density: low regularity case.
{\it Comm. Pure Appl. Math.},
{\bf 4} (2019), 835-884.

\bibitem{lions}
P. Lions. {\it Mathematical Topics in Fluid Mechanics, Vol I: Incompressible Models}.
 Oxford University Press, 1996.

\bibitem{lsz}
B. L\"{u}, X. Shi, X. Zhong. Global existence and large time asymptotic behavior of strong solutions to the Cauchy  problem of 2D density-dependent Navier-Stokes equations with vacuum. {\it Nonlinearity}, {\bf 31}(6) (2018), 2617-2632.

\bibitem{NP}
P. N\'{a}poli, T. Picon.
Stein-Weiss inequality in $L^1$-norm for vector fields. 
{\it Proc. Amer. Math. Soc.,} {\bf 151}(4) (2023), 1663-1679.

\bibitem{PZ}
M. Paicu, P. Zhang.
Global solutions to the 3-D incompressible inhomogeneous Navier-Stokes system.
{\it J. Funct. Anal.}, {\bf 262} (2012), 3556-3584.

\bibitem{PZ2}
M. Paicu, P. Zhang. Striated regularity of 2-D inhomogeneous incompressible Navier-Stokes system with
variable viscosity. 
{\it Comm. Math. Phys.},
{\bf 376}(1) (2020), 385-439.

\bibitem{PT}
C. Prange, J. Tan. Free boundary regularity of vacuum states for incompressible viscous flows in unbounded domains. {\it Amer. J. Math.}, to appear. Preprint,
arXiv: 2310.09288.

\bibitem{SV}
S. \v{S}kondri\'{c}, A. Violini. On Lions' density patch problem at a critical level of regularity.
Preprint, arXiv: 2604.16017.

\bibitem{Sm}
J. Simon. Nonhomogeneous viscous incompressible fluids: existence of velocity, density, and
pressure. {\it SIAM J. Math. Anal.}, {\bf 21}(5)
 (1990), 1093-1117.  

\bibitem{SGW}
E. Stein, G. Weiss.
Fractional integrals on $n$-dimensional Euclidean space. 
{\it J. Math. Mech.}
{\bf 7} (1958), 503-514.

\bibitem{S}
E. Stein, T. Murphy.
{\it Harmonic Analysis: Real-Variable Methods, Orthogonality, and Oscillatory Integrals.}
Princeton University Press, 1993.

\bibitem{SW}
C. Sun, Z. Wang.
On the end-point of Stein-Weiss inequality.
{\it J.  Geom. Anal.,}
{\bf 36}(3) (2026), No.100.

\end{thebibliography}
\end{document}